\numberwithin{equation}{section}
\numberwithin{figure}{section}
\newtheorem{theorem}{Theorem}[section]
\newtheorem{corollary}[theorem]{Corollary}
\newtheorem{lemma}[theorem]{Lemma}
\newtheorem{definition}[theorem]{Definition}
\newtheorem{remark}[theorem]{Remark}
\newtheorem{remarks}[theorem]{Remarks}
\newtheorem{proposition}[theorem]{Proposition}
\newtheorem{conjecture}[theorem]{Conjecture}
\newtheorem{example}[theorem]{Example}
\newcommand{\be}{\begin{equation}}
\newcommand{\core}{C_0^{\infty}(\Omega)}
\newcommand{\laplace}{\Delta}
\newcommand{\pl}{\laplace_p}
\newcommand{\grad}{\nabla}
\newcommand{\pd}{\partial}
\newcommand{\bo}{\pd}
\newcommand{\sm}{\setminus}
\newcommand{\diver}{\mathrm{div}\,}
\newcommand{\resc}{\mathcal{D}}
\newcommand{\fundot}{\circ}
\newcommand{\beq}{\begin{equation}}
\newcommand{\eeq}{\end{equation}}
\newcommand{\beqa}{\begin{eqnarray}}
\newcommand{\eeqa}{\end{eqnarray}}
\newcommand{\beqanl}{\begin{eqnarray*}}
\newcommand{\eeqanl}{\end{eqnarray*}}
\newcommand{\bs}{\begin{sub}}
\newcommand{\es}{\end{sub}}
\newcommand{\bsn}{\begin{subn}}
\newcommand{\esn}{\end{subn}}
\newcommand{\bea}{\begin{eqnarray}}
\newcommand{\eea}{\end{eqnarray}}
\newcommand{\bean}{\begin{eqnarray*}}
\newcommand{\eean}{\end{eqnarray*}}
\newcommand{\BA}[1]{\begin{array}{#1}}
\newcommand{\EA}{\end{array}}
\newlength{\wex}  \newlength{\hex}
\newcommand{\dt}{\,\mathrm{d}t}
\newcommand{\ds}{\,\mathrm{d}s}
\def\ga{\alpha}     \def\gb{\beta}       
                         \def\vge{\varepsilon}
            \def\gl{\lambda}
\def\gm{\mu}                 
    \def\gr{\rho}        
       \def\gt{\tau}
                \def\gz{\zeta}
\def\Gw{\Omega}              
\def\squarebox#1{\hbox to #1{\hfill\vbox to #1{\vfill}}}
\newcommand{\lsim}{\precsim}
\newcommand{\gsim}{\succsim}
\begin{document}

\begin{frontmatter}

\title{Isolated singularities of positive solutions of $p$-Laplacian type equations in $\mathbb{R}^d$}

\author{Martin Fraas}
\address{Department of Physics, Technion - Israel Institute of
Technology,  Haifa, Israel} \ead{martin.fraas@gmail.com}

\author{Yehuda Pinchover}
\address{Department of Mathematics, Technion - Israel Institute of
Technology,  Haifa, Israel\\[1cm]
Dedicated to the memory of Vitali Liskevich} \ead{pincho@techunix.technion.ac.il}

\begin{abstract}
We study the behavior of positive solutions of $p$-Laplacian type elliptic equations of the form
 \begin{equation*}
Q'(u):=-\Delta_p (u) + V |u|^{p-2} u = 0 \qquad \mbox{ in } \Omega\setminus \{\zeta\}
\end{equation*}
near an isolated singular point $\zeta \in \Omega\cup\{\infty\}$, where $1<p<\infty$, and $\Omega$ is a domain in $\mathbb{R}^d$ with $d>1$.
We obtain removable singularity theorems for positive solutions near $\zeta$. In particular, using a new three-spheres theorems for certain solutions of the above equation near $\zeta$ we prove that if $V$ belongs to a certain Kato class near $\zeta$ and $p>d$ (respectively, $p<d$), then any positive
solution $u$ of the equation $Q'(u)=0$ in a punctured neighborhood of $\zeta=0$ (respectively, $\zeta=\infty$) is in fact continuous at $\zeta$.  Under further assumptions we find the asymptotic behavior of $u$ near $\zeta$.
\\[4mm]
\noindent  2010 {\em Mathematics Subject Classification.}
\!Primary 35B40; Secondary 35B09, 35B53, 35J92.\\[1mm] \noindent
{\em Keywords.}  $p$-Laplacian, positive solutions, quasilinear elliptic
equation, removable singularity, three-spheres inequality, Wolff potential.
\end{abstract}
\end{frontmatter}

\section{Introduction}\label{sec1}

The aim of this paper is to study the behavior of positive solutions of the quasilinear elliptic equation
\begin{equation} \label{eq:1}
Q'_V(u)=Q'(u):=-\pl (u) + V |u|^{p-2} u = 0 \qquad \mbox{ in } \Omega\sm \{\zeta\}
\end{equation}
near an {\em isolated} singular point $\zeta \in \Omega\cup\{\infty\}$, where
$\pl(u):=\!\diver(|\nabla u|^{p-2}\nabla u)$ is the $p$-Laplacian with $1<p<\infty$, $\Omega$ is a domain in $\mathbb{R}^d$ with
 $d>1$, and  $V$ is a real function satisfying
\begin{equation}\label{eq:V}
V\in  L^\infty_{\mathrm{loc}}(\Omega\sm \{\gz\}).
\end{equation}
We are interested in the local behavior of solutions near $\zeta$, and in removable singularity theorems for such equations. Therefore, unless otherwise stated, we assume
that either $\zeta = 0$ and $\Omega=B_R$ is the open ball of radius $R>0$ centered at the origin, or
 $\zeta = \infty$ and $\Gw$ is the exterior domain $\Omega=B_R^*:=\mathbb{R}^d\sm \overline{B_R}$.

 The different behaviors of positive solutions near isolated singularities in different cases are well illustrated when $V=0$ and $Q'$ is the $p$-Laplacian.
\begin{example}\label{ex:pl}{\em
Let $\Omega=\mathbb{R}^d\sm \{0\}$, $ d > 1$, and denote
\begin{equation}\label{ga*}
\alpha^* := \frac{p-d}{p-1}\,.
\end{equation}
 Define the ``fundamental solution" of the $p\,$-Laplacian by
\begin{equation} \label{eq:free}
v_{\alpha^*}(x): = \left\{
\begin{array}{lr} |x|^{\alpha^*} & p \neq d, \\[2mm]
		  |\log(|x|)| & p = d ,\;\;| x|\neq 1.
\end{array} \right.
\end{equation}
Then for any $A>0$ and $B\geq 0$ the positive function $u(x):=A v_{\alpha^*}(x) + B$ is $p$-harmonic in punctured neighborhoods of $\zeta=0$ and $\zeta=\infty$.
Indeed, let $v$ be a radial function, and denote $r:=|x|$, $v':=\pd v/\pd r$, then the radial $p\,$-Laplacian is given by
\begin{equation}\label{eq:green}
-\pl (v) =-\frac{1}{r^{d-1}}\left( r^{d-1}|v'|^{p-2} v'  \right)' =
-|v'|^{p-2}\left[(p-1)v''+\frac{d-1}{r}v'\right].
\end{equation}
Consequently, for $v_\alpha(r):=r^\ga$ with $\ga\neq 0$ we have
\begin{equation}\label{eq:green1}
-\pl (v_\alpha) =
-|\alpha|^{p-2} \alpha
\left[\alpha (p-1) + d -p \right]r^{\alpha(p-1) - p}.
\end{equation}
Substituting $\ga=\ga^*$, we obtain for $p\neq d$ that
$$-\pl (u(x))=-\pl (A v_{\alpha^*}(x) + B)=0 \qquad \forall x\in \mathbb{R}^d\sm \{0\}.$$
The case $p=d$ is similar (for $| x|\neq 0,1$).

Consequently,
\begin{equation}\label{eq:limplp}
\lim_{x \to 0} u(x) = \left\{
\begin{array}{lr} \infty & p \leq d, \\[2mm]
		  B & p > d,
\end{array} \right. \qquad
\lim_{x \to \infty} u(x) = \left\{
\begin{array}{lr} B & p < d, \\[2mm]
		  \infty & p \geq d.
\end{array} \right.
\end{equation}
 }
\end{example}
\begin{definition}\label{def:class}{\em
We call the cases $\zeta = 0$ with $p \leq d$, and  $\zeta = \infty$ with $p \geq d$
the {\em classical cases}. The complementary cases (where $\gz$ has a nonzero capacity with respect to the $p\,$-Laplacian \cite[Example~2.22]{Martio}) are called the {\em nonclassical cases}. Note that in the classical cases we have $\lim_{x \to \gz}v_{\alpha^*}(x)=\infty$.
   }
\end{definition}

Our aim is to find an appropriate class of potentials for which positive solutions of the equation $Q'(u)=0$ near $\gz$ exhibits similar behavior to that of the $p\,$-Laplacian. We consider the following classes of singular potentials (cf. \cite[definitions~1.1 and 2.1]{PF}).
\begin{definition}\label{def:Fuchs} {\em
Let $\Gw$ be a domain, and let $\zeta \in \Omega \cup \{\infty\}$, where $\zeta=0$ or $\zeta=\infty$. Let $V\in L^\infty_{\mathrm{loc}}(\Omega\sm \{\gz\})$.  We say that (\ref{eq:1}) has a {\em Fuchsian type singularity at $\zeta$} if there exists a positive constant $C$ such that $V$ satisfies
\begin{equation}\label{eq:FuchsCon}
 |x|^p |V(x)| \leq C \qquad \mbox{near $\gz$}.
    \end{equation}
}
\end{definition}

For $R>0$ let $V_R$ be the {\em scaled potential} defined by
\begin{equation} \label{eq:wfuchs}
 V_R(x) := R^p V(R x) \qquad  x \in \Omega/R.
 \end{equation}
Let $\{R_n\}\subset \mathbb{R}_+$ be a sequence satisfying   $R_n \to \zeta$
(where $\zeta$ is either $0$ or $\infty$)
such that \begin{equation}\label{eq:rescalepotential}
V_{R_n}\underset{n\to \infty}{\longrightarrow} W \quad \mbox{ in  the weak$^*$ topology of  $L_{\mathrm{loc}}^{\infty}(Y)$,}
\end{equation}
 where $Y:=\lim_{n\to\infty} \Omega/R_n = \mathbb{R}^d\setminus\{0\}$.
Define the {\em limiting dilated equation with respect to} (\ref{eq:1}) (and the sequence $\{R_n\}$) as
\begin{equation}
\resc^{\{R_n\}}(Q)(w) := -\pl(w) + W |w|^{p-2} w=0 \qquad \mbox{ on } Y.
\end{equation}

\begin{definition}\label{def:weak_Fuchs} {\em
Let $V\in L^\infty_{\mathrm{loc}}(\Omega \setminus\{\zeta\})$.  We say that $V$ has a {\em weak Fuchsian singularity at $\zeta$}  if inequality \eqref{eq:FuchsCon} is satisfied, and in addition, there exist  $m$ sequences $\{R_n^{(i)}\}_{n=1}^\infty \subset \mathbb{R}_+$, $1\leq i\leq m$,  satisfying $R_n^{(i)} \to \zeta^{(i)}$, where $\zeta^{(1)} = \zeta$, and $\zeta^{(i)} = 0$ or
$\zeta^{(i)} = \infty$
 for $2\leq i \leq m$,  such that
\begin{equation}\label{eq:weakfuchs}
\resc^{\{R_n^{(m)}\}} \fundot \dots  \fundot \resc^{\{R_n^{(1)}\}}(Q)(w) = -\pl(w) \qquad \mbox{ on } Y.
\end{equation}
 }
\end{definition}
\begin{remark}\label{rem_closure}
{\em Let $V  \in L^\infty_{\mathrm{loc}}(\mathbb{R}^d\setminus\{0\})$ and let $SC_V \subset L^\infty_{\mathrm{loc}}(\mathbb{R}^d\setminus\{0\})$ be the set of the scaled potentials $SC_V := \{ R^p V(R x),\,R >0\}$. If $V$ is a weak Fuchsian potential, then $W=0$ belongs to a weak* closure of $SC_V$. We do not know whether the reverse implication holds true.
}
\end{remark}
Throughout the paper we assume that near $\gz$ the function $V$ is of the form:
\begin{equation}\label{CondC}
   \qquad |V(x)| \leq  G(|x|) = \frac{g(|x|)}{|x|^p} \,,
\end{equation}
 where  $g : \mathbb{R}_+ \to \mathbb{R}_+$ is a positive bounded continuous function which satisfies some further conditions.

 For functions satisfying \eqref{CondC}, we say that $V$ belongs to {\em Kato class near} $\gz$  if $g$ satisfies the following  integral condition
 \begin{equation*}\tag{\textbf{C1}}
   \left| \int_\zeta^1 t^{\alpha^*}\left|\int_a^t \frac{g(s)}{s^p} \,s^{d-1} \ds \right|^{\frac{1}{p-1}} \frac{\dt}{t} \right| = \left| \int_\zeta^1 \left|t^{1-d}\int_a^t \frac{g(s)}{s^p} \,s^{d-1} \ds \right|^{\frac{1}{p-1}} \dt \right|< \infty,
 \end{equation*}
where $a$ is chosen to make the class of admissible singular potentials near $\gz$ almost optimal. So, $a=\gz$ in the classical cases and $a=1$ in the nonclassical cases, i.e. $a$ is given by
\vskip 4mm
\begin{center}
\begin{tabular}{|c|c|c|c|}
\hline
  & $p < d$ & $p>d$ & $p=d$ \\[3mm]
\hline
$\zeta = 0$ & $a = 0$ & $a = 1$ & $a=0$, \\[3mm]
\hline
$\zeta = \infty$ & $a = 1$ & $a = \infty$ & $ a =\infty $.\\[3mm]
\hline
\end{tabular}
\end{center}
\begin{remark}\label{rem_Kato}{\em 
The quasilinear Kato class in $\Omega$ is defined and studied in \cite{JV,LS,LSS,MH} using the Wolff potential. We note that if $V \in L^\infty(\Omega \setminus\{\zeta\})$ belongs to Kato class near $\zeta$, then $V$ belongs to the Kato class in $\Omega$.
 }
 \end{remark}
Some of our results are valid under the following closely related {\em Dini condition} at $\gz$
\begin{align*}\tag{\textbf{C2}}
  \begin{array}{ll}
   \displaystyle{\left| \int_\zeta^1 \frac{g(s)}{s} \,\ds \right|<\infty}  & p\neq d, \\[6mm]
   \displaystyle{\left| \int_\zeta^1 \frac{g(s)}{s} |\log s|^{\,d-1} \,\ds \right| < \infty} & p = d.
  \end{array}
\end{align*}
In Section~\ref{sec2} we prove the following lemma.
\begin{lemma}\label{C1C@weakFuchs}
If $V\in  L^\infty_{\mathrm{loc}}(\Omega\sm \{\gz\})$ satisfies either Condition~(\textbf{C1}) or Condition~(\textbf{C2}) near $\gz$, then $V$ has a weak Fuchsian singularity at $\zeta$. Moreover,
\begin{equation}\label{eq:rescalepotential1}
R^p V(R x)\underset{R\to \gz}{\longrightarrow} 0 \quad \mbox{ in  the weak$^*$ topology of  $L_{\mathrm{loc}}^{\infty}(\mathbb{R}^d\setminus\{0\})$.}
\end{equation}
\end{lemma}

\begin{remarks}\label{assumptions}{\em
1. Two prototypes of functions $g$ that satisfy both conditions (\textbf{C1}) and (\textbf{C2}) are $g(|x|)=|x|^\vge$ and $|\log(|x|)|^{-\gb}$
 if $\zeta=0$, or $g(|x|)=|x|^{-\vge}$ and $|\log(|x|)|^{-\gb}$ if $\zeta=\infty$, where  $\vge$ is any positive number and $\gb > (p-1)$ for $p \neq d$ or $\beta > p$ for $p=d$.

\vskip 3mm

2. If $V$ belongs to $L^{q}(\Omega \sm  \{\zeta\})$, then
without loss of generality we may assume that $G\in L^{q}(\Omega \sm  \{\zeta\})$, where $G(s) = g(s)|s|^{-p}$ satisfies \eqref{CondC}. Therefore, in the classical case  $p \leq d$ and $\zeta = 0$
(respectively, $p \geq d$ and $\zeta = \infty$), if $V \in L^{q}(\Omega \sm  \{\zeta\})$ with $q > d/p$ (respectively, $q <d/p$), then conditions (\textbf{C1}) and (\textbf{C2}) are satisfied near $\gz$ (cf. Theorem~\ref{thm:asympSerrin} and \cite{Ser}).

\vskip 3mm

3. In the nonclassical cases, the integrability of $V$ near $\gz$ clearly implies that $V$ satisfies conditions~(\textbf{C1}) and (\textbf{C2}) near $\gz$.
}
\end{remarks}

Under the above assumptions, positive solutions of the equation $Q'(u) = 0$ in $\Omega\sm \{\gz\}$ are
$C^{1,\ga}$-continuous and satisfy Harnack inequality in $\Omega\sm \{\gz\}$ \cite[Theorem 7.4.1]{Puc}.
Moreover, the equation $Q'(u)=0$ admits positive (weak) solutions near the singular point $\zeta$ (see Lemma~\ref{lem:pos} and Lemma~\ref{lem:green}).

The present paper is a continuation of our recent paper \cite{PF}. In that paper we proved {\em ratio-limit} theorems for {\em quotients} of two positive solutions of the equation $Q'_V(u) = 0$ near $\zeta$ if $V$ satisfies conditions~(\textbf{C1}) or (\textbf{C2}) near $\gz$. More precisely, under the  (weaker) assumption that $V$ has a weak Fuchsian singularity at the isolated singular point $\zeta$, it was shown that if $u$ and $v$  are two positive solutions of the equation $Q'(w) = 0$ in a punctured neighborhood of $\zeta$, then
 $$
\lim_{x \to \zeta} \frac{u(x)}{v(x)} \quad \mbox{exists;}
$$
the limit might be infinite. As a result, some positive Liouville theorems, and removable singularity theorems for the equation $Q'(w)=0$ in certain domains were obtained.

 The goal of the present paper is to study the precise behavior of a {\em single} positive solution $u$ defined near $\zeta$. One of the main results of the present work is that under assumption (\textbf{C1})
$$
\lim_{x \to \zeta} u(x) \quad \mbox{exists.}
$$
Moreover, in the nonclassical cases (where either $\gz=0$ and $p > d$, or $\gz=\infty$ and $p< d$) positive solutions of $Q'(u) = 0$ in $\Omega\sm \{\zeta\}$ are in fact continuous at the singular point $\zeta$ (see Theorem~\ref{thm:main}). Under further assumptions we obtain the exact asymptotic behavior   for the case $\lim_{x \to \zeta} u(x) = 0$ (see Theorem~\ref{thm:asymp3}).  The particular case $V = 0$ has been studied in \cite{KV,Serrin2}. The case $p = \infty$, $\zeta=0$,  and $Q'(u) = -\Delta_\infty(u)$ has been studied in \cite{bhat} and stimulated the present paper as well. We note that our result answers a question posed in \cite[Section 5]{PT} for the case $\zeta=0$ and $p>d$.

\begin{remark}\label{rem:Fuchs}{\em
Suppose that $V$ has a {\em Fuchsian type singularity} at $\gz$.
Then even in the linear case ($p=2$) positive solutions of $Q'(u) = 0$ in $\Omega\sm \{\zeta\}$ might not admit a limit at $\zeta$ (see \cite[Example~9.1]{P94}), albeit, a ratio limit theorem holds true near any Fuchsian singular point if $p=2$ \cite{P94}. Such a ratio limit theorem should also be true for $p\neq 2$ if \eqref{eq:FuchsCon} is satisfied (see \cite{PF} for partial results).
} \end{remark}

Isolated singularities have been studied  extensively in the past fifty years. Indeed, the case $p<d$ and $\zeta=\infty$ has been studied in \cite[and the references therein]{LSS,P}.
On the other hand, {\em interior} singularities of $p$-Laplacian type equations for the case $1<p \leq d$ have been studied by Serrin in \cite{Ser} who proved the following removable singularity result.

\begin{theorem}\label{thm:asympSerrin}
Let $1<p \leq d$, and assume that $V\in L^q_{\mathrm{loc}}(B_R)$ with $q > d/p $. Let $u$ be a positive solution of $Q'(u) =0$ in the punctured ball $B_R\sm \{0\}$.  Then either $u$ has a removable singularity at the origin, or
\begin{equation}\label{asymp:dgeqpSerrin}
u(x) \;\asymp\;
\left\{
  \begin{array}{ll}
   \quad |x|^{\alpha^*} & \hbox{ if }\; p<d,\\[2mm]
    -\log |x| & \hbox{ if }\; p=d.
  \end{array}
\right.
\end{equation}
\end{theorem}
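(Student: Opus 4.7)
The plan is to establish a sharp dichotomy for $u$ near the origin, based on whether $u$ remains bounded or not, and to obtain the asymptotic rate $v_{\alpha^*}$ in the unbounded case by comparison with carefully constructed barriers.

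First I would set $M(r):=\max_{|x|=r}u$ and $m(r):=\min_{|x|=r}u$, and invoke the Serrin--Trudinger regularity theory for quasilinear equations with an $L^q_{\mathrm{loc}}$ zero-order coefficient, $q>d/p$. Rescaling to unit annuli, this yields a uniform Harnack inequality on spherical shells: there exists $C>0$ (independent of $r$) such that for every sufficiently small $r>0$,
\begin{equation*}
M(r)\;\leq\; C\,m(r).
\end{equation*}
Consequently the pointwise behavior of $u$ near the origin is governed by the single monotonicity-free quantity $M(r)$, and the two cases $\limsup_{r\to 0}M(r)<\infty$ or $=\infty$ are exhaustive and mutually exclusive (up to uniform constants).

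In the bounded case, $u\in L^\infty_{\mathrm{loc}}(B_R)$, and the zero-order term $V|u|^{p-2}u$ lies in $L^q_{\mathrm{loc}}(B_R)$ with $q>d/p$. Since the origin has zero $p$-capacity when $p\leq d$, one approximates the characteristic function of a small ball by Lipschitz cut-offs $\eta_\varepsilon$ supported in $\{|x|\geq\varepsilon\}$ with $\|\nabla\eta_\varepsilon\|_{L^p}\to 0$, and uses them as test functions. The bound on $u$ together with the Sobolev/Morrey embedding controlled by $q>d/p$ shows that the boundary contributions vanish in the limit, so $u$ is a weak solution of $Q'(u)=0$ in the whole ball $B_R$. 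Classical interior regularity then identifies $u$ at the origin, giving the removable singularity alternative.

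In the unbounded case, Harnack forces $m(r)\to\infty$ as $r\to 0$. I would then establish the two-sided bound
\begin{equation*}
c\,v_{\alpha^*}(x)\;\leq\; u(x)\;\leq\; C\,v_{\alpha^*}(x)
\end{equation*}
by comparing $u$, on each annulus $A_r:=\{r<|x|<R/2\}$, with rescaled copies of the fundamental solution. The point is that $Av_{\alpha^*}$ satisfies $-\Delta_p(Av_{\alpha^*})=0$, so the residual when inserted into $Q'$ is just $V(Av_{\alpha^*})^{p-1}$, which under the hypothesis $V\in L^q_{\mathrm{loc}}$ with $q>d/p$ turns out to be subcritical in the natural scaling on shells. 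One can therefore perturb $Av_{\alpha^*}$ by a small corrector $\psi_r$ (produced by solving a linear auxiliary problem on the shell, or by a Moser-type iteration feeding on the smallness of $\|V\|_{L^q(A_r)}$ as $r\to 0$) to obtain a strict super-solution for $A$ large and a strict sub-solution for $A$ small. Comparison via Di\'az--Saa or the weak comparison principle for the $p$-Laplacian on $A_r$, combined with the Harnack inequality on the inner boundary $\{|x|=r\}$ (which lets us adjust the constant $A$ so that the barrier dominates or is dominated by $u$ on $\partial A_r$), then propagates the inequality to the interior and, after letting the inner radius shrink, gives the claimed two-sided bound.

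The main obstacle is the construction and control of these barrier correctors: the quasilinear operator does not linearize, so one cannot simply add a small multiple of a known solution to shift the right-hand side. The key technical point is to exploit the subcritical integrability $q>d/p$ to show that on small annuli the rescaled potential has vanishing $L^q$-norm, so the inhomogeneous term generated by $V(Av_{\alpha^*})^{p-1}$ can be absorbed into a corrector that is of lower order than $v_{\alpha^*}$. The case $p=d$ requires the analogous argument with $v_{\alpha^*}=|\log|x||$ and carefully tracking the logarithmic scaling, but the structure of the proof is otherwise unchanged.
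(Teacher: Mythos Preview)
The paper does not prove this statement: Theorem~\ref{thm:asympSerrin} is quoted from Serrin~\cite{Ser} and used as a black box (for instance in the proof of Theorem~\ref{thm:asymp}). So there is no ``paper's own proof'' to compare against; what one can do is compare your outline with Serrin's original argument and with the tools the present paper develops for related statements.

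Your dichotomy via the uniform Harnack inequality and the capacity argument for the bounded case are standard and correct; this part matches Serrin's approach and also the paper's Corollary~\ref{thm:remov} (for $p<d$).

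There is, however, a genuine gap in your treatment of the unbounded case, specifically in the \emph{upper} bound $u\le C\,v_{\alpha^*}$. Your scheme is to compare $u$ with $A\,v_{\alpha^*}+\psi_r$ on the annulus $\{r<|x|<R/2\}$ and to ``adjust the constant $A$ so that the barrier dominates $u$ on $\partial A_r$''. But on the inner sphere this forces $A\gtrsim M(r)/v_{\alpha^*}(r)$, so $A=A(r)$, and the boundedness of $A(r)$ as $r\to 0$ is exactly the statement you are trying to prove. Letting $r\to 0$ therefore does not give a uniform constant without an independent a~priori bound on $M(r)/v_{\alpha^*}(r)$. Serrin obtains this bound by a different mechanism: integral (test-function) estimates coming directly from the weak formulation, which yield $u\le C(v_{\alpha^*}+1)$ for \emph{every} positive solution in the punctured ball; only afterwards does one split into the bounded/unbounded alternatives. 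The lower bound $u\ge c\,v_{\alpha^*}$ has the same circularity in your sketch: knowing only $m(r)\to\infty$ does not by itself give $m(r)\ge c\,v_{\alpha^*}(r)$ on the inner sphere.

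Incidentally, the explicit sub/supersolutions that the present paper constructs (Lemma~\ref{lem:green}, based on the Wolff potential $W_G$) are of the form $1\pm C\,W_G$ and $v_{\alpha^*}\pm C\,U_G$ and serve a different purpose: they produce \emph{specific} solutions with prescribed asymptotics, not a priori bounds on an arbitrary solution. They would not close your argument either, for the same reason.
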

We improve the above result and obtain the {\em asymptotic behavior} of positive solutions near $\gz=0$  (the proof
is given in the Section \ref{sec:asymp}).
\begin{theorem}\label{thm:asymp}
Let $1<p \leq d$, and assume that $V$ satisfies \eqref{CondC} near $\gz=0$ and that $V\in L^q_{\mathrm{loc}}(B_R)$, with $q >d/p$. Let $u$ be a positive solution of $Q'(u) =0$ in the punctured ball $B_R\sm \{0\}$.  Then either $u$ has a removable singularity at the origin, or
\begin{equation}\label{asymp:dgeqp}
u(x) \;\underset{x\to 0}{\sim}\;
\left\{
  \begin{array}{ll}
   \quad |x|^{\alpha^*} & \hbox{ if }\; p<d,\\[2mm]
    -\log |x| & \hbox{ if }\; p=d.
  \end{array}
\right.
\end{equation}
\end{theorem}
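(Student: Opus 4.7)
My plan is to reduce the sharp asymptotic to that of a single canonical positive solution $v$ and then transport it to $u$ using the ratio limit theorem from \cite{PF}. The hypothesis $V \in L^q(B_R)$ with $q>d/p$ implies conditions (\textbf{C1}) and (\textbf{C2}) near $0$ by Remark~\ref{assumptions}(2); Lemma~\ref{C1C@weakFuchs} together with Remark~\ref{rem:weakF} then yields that $V$ has a weak Fuchsian singularity in the sense of \cite{PF}, so the ratio limit theorem applies: for any two positive solutions of $Q'(w)=0$ in a punctured neighborhood of $0$, the limit of their ratio at $0$ exists in $[0,\infty]$. Since Serrin's Theorem~\ref{thm:asympSerrin} gives $u \asymp v_{\alpha^*}$ near $0$ (otherwise the singularity is removable and we are done), once I exhibit a positive solution $v$ with $v \sim c\, v_{\alpha^*}$ for some $c>0$, the ratio limit theorem will force $u(x) \sim Lc\,v_{\alpha^*}(x)$ with $L = \lim_{x \to 0} u(x)/v(x) \in (0,\infty)$, where finiteness and positivity of $L$ come from applying Serrin's bound to both $u$ and $v$.

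For the reference solution I would take the canonical positive solution $v$ in $B_R\setminus\{0\}$ provided by Lemma~\ref{lem:green}, constructed as the monotone limit as $k\to\infty$ of the positive solutions $v_k$ of the Dirichlet problems $Q'(v_k)=0$ in the annuli $B_R \setminus B_{R/k}$ with $v_k = 0$ on $\partial B_R$ and $v_k = v_{\alpha^*}(R/k)$ on $\partial B_{R/k}$. Serrin's bound yields $v \asymp v_{\alpha^*}$ near $0$; the task is to sharpen this to $v \sim c\, v_{\alpha^*}$.

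To that end I would rescale: for a sequence $R_n \searrow 0$ along which the weakened Fuchsian property of Lemma~\ref{C1C@weakFuchs} holds, set $v_n(x) := v(R_n x)/v_{\alpha^*}(R_n e_1)$, so that $v_n$ solves $Q'_{V_n}(v_n)=0$ with $V_n(x) := R_n^p V(R_n x)$ and $V_n \to 0$ in the weak-$*$ topology of $L^\infty_{\mathrm{loc}}(\mathbb{R}^d\setminus\{0\})$. Serrin's bound scales to $v_n \asymp v_{\alpha^*}$ uniformly in $n$, and the $C^{1,\alpha}$-compactness of \cite[Theorem 7.4.1]{Puc} extracts a subsequence $v_{n_j}\to w$ locally uniformly on $\mathbb{R}^d\setminus\{0\}$, with $w$ a positive $p$-harmonic function. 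The Liouville classification from Example~\ref{ex:pl}, combined with the decay $v_{n_j}(x)\to 0$ as $|x|\to\infty$ along the expanding domains (since $v$ is bounded on $\partial B_R$ while $v_{\alpha^*}(R_n e_1)\to\infty$ for $p<d$), forces $w = A\, v_{\alpha^*}$ for some $A > 0$. The hard part will be to show that $A$ is independent of the subsequence, so that $v(x) \sim A\, v_{\alpha^*}(x)$ for the full limit $x\to 0$; I expect this uniqueness to follow from the monotone/capacitary nature of the construction of $v$ together with the new three-spheres inequality advertised in the abstract, which controls the oscillation of $v/v_{\alpha^*}$ on dyadic annuli near $0$. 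The case $p=d$ proceeds analogously with $v_{\alpha^*}(x) = |\log|x||$ and an additive normalization of the rescaled profiles.
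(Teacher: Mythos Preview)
Your overall strategy is exactly the one the paper uses: invoke Serrin's Theorem~\ref{thm:asympSerrin} to reduce to the case $u\asymp v_{\alpha^*}$, exhibit a reference positive solution $w$ with $w\sim v_{\alpha^*}$ near $0$, and then apply the ratio limit theorem (Proposition~\ref{thm:reg}) to conclude $u\sim w\sim v_{\alpha^*}$. The paper's proof is literally three lines along these steps.

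Where you diverge, and where your proposal has a genuine gap, is in obtaining the reference solution. You describe an ad hoc exhaustion by Dirichlet problems (which, incidentally, is not what Lemma~\ref{lem:green} does), obtain only $v\asymp v_{\alpha^*}$ from Serrin, and then launch a rescaling/compactness argument whose ``hard part'' --- subsequence-independence of the limit $A$ --- you leave to a hoped-for application of the three-spheres theorem. That step is not filled in, and the three-spheres theorem of Section~\ref{sec:tsi} is not designed to control the oscillation of $v/v_{\alpha^*}$ in the way you suggest.

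This detour is unnecessary. Lemma~\ref{lem:green} already gives \emph{explicit} radial sub/supersolutions $v_\pm(x)=v_{\alpha^*}(x)\mp C\,U_G(x)$ under (\textbf{C2}), and Lemma~\ref{Ulemma} says $U_G(x)/v_{\alpha^*}(x)\to 0$ as $x\to 0$, so that \emph{both} $v_+/v_{\alpha^*}$ and $v_-/v_{\alpha^*}$ tend to the \emph{same} constant $1$. Lemma~\ref{lem:0sol} then produces, via a sub/supersolution (Perron) argument, a genuine solution $w$ with $v_-\le w\le v_+$ near $0$, and the squeeze immediately gives $w\sim v_{\alpha^*}$ --- no rescaling, no compactness, no subsequence issue. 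Once you have this $w$, your first paragraph is already the complete proof.
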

For bounded potentials the above result is due to \cite{KV,PTalt}.
The assumptions on $V$ can be further weakened to nearly optimal classes. In particular, the removability of isolated singularity  has  been studied under the assumptions that $p\leq d$, and $V$ belongs to certain (Kato) classes of functions or measures; see \cite{GZ,JV,LS,LSS,MH,P} and the references therein.

\vskip 4mm

Let us mention the other classical case. In \cite{PF} we studied the asymptotic behavior of positive $p$-harmonic functions near $\infty$ for the case $p\geq d$ (cf. \cite{Serrin2}). Using a modified Kelvin transform, we proved:
\begin{theorem}\label{thm:asymp2}
Let $p\geq d>1$, and let $u$ be a positive solution of the equation $-\pl (u) =0$ in a neighborhood of
infinity in $\mathbb{R}^d$. Then either $u$ has a removable singularity at $\infty$ (i.e., $u$ admits a finite limit as $x\to\infty$), or
\begin{equation*}
u(x) \;\underset{x\to \infty}{\sim}\;
\left\{
  \begin{array}{ll}
   \quad |x|^{\alpha^*} & \hbox{ if }\; p>d,\\[2mm]
    \log |x| & \hbox{ if }\; p=d.
  \end{array}
\right.
\end{equation*}
\end{theorem}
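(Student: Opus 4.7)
The plan is to reduce Theorem \ref{thm:asymp2} to the corresponding interior results already available in this paper (Theorem \ref{thm:asymp} for $p=d$, and Theorems \ref{thm:main} and \ref{thm:asymp3} for $p>d$) by means of a Kelvin-type inversion. Let $\Phi(y) := y/|y|^2$ be the standard inversion, an involution of $\mathbb{R}^d \setminus \{0\}$ mapping a punctured neighborhood of $0$ onto the exterior of a ball. Given a positive solution $u$ of $-\Delta_p u = 0$ on $\{|x| > R\}$, define the modified Kelvin transform
$$
\tilde u(y) \,:=\, |y|^{\alpha^*}\, u\bigl(y/|y|^2\bigr),
$$
so that the weight reduces to $1$ when $p = d$. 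This choice is dictated by the radial calculation: it sends the two-parameter family of positive radial $p$-harmonic functions $A|x|^{\alpha^*} + B$ at infinity to the family $A + B|y|^{\alpha^*}$ at the origin, interchanging the roles of the fundamental solution and the additive constant.

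The first step is to verify that $\tilde u$ satisfies an equation
$$
-\Delta_p \tilde u + \tilde V\, |\tilde u|^{p-2}\tilde u = 0 \quad \text{in } \{0<|y|<1/R\},
$$
with an induced potential $\tilde V$. For $p = d$ this follows immediately from the conformal invariance of the $d$-Laplacian, yielding $\tilde V \equiv 0$. For $p > d$ a chain-rule computation delivers an explicit formula for $\tilde V$, and I would verify that $\tilde V$ is bounded near $y=0$, hence satisfies both \textbf{(C1)} and \textbf{(C2)}. The key algebraic point is that the weight $|y|^{\alpha^*}$ absorbs the leading obstruction to $p$-harmonicity, leaving only a lower-order term that is smooth in a punctured neighborhood of $0$.

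With $\tilde u$ in hand, apply the appropriate interior result. For $p = d$, Theorem \ref{thm:asymp} gives the dichotomy: either $\tilde u$ extends continuously to $0$ with a finite limit, or $\tilde u(y) \sim -\log|y|$ as $y\to 0$; using $u(x) = \tilde u(x/|x|^2)$ and $|y| = 1/|x|$, these translate to the finite-limit alternative and to $u(x) \sim \log|x|$ at infinity, respectively. For $p > d$ the transformed problem lies in the nonclassical interior regime, so by Theorem \ref{thm:main} the function $\tilde u$ extends continuously to $y = 0$. If $\tilde u(0) > 0$, then the inversion formula $u(x) = |x|^{\alpha^*} \tilde u(x/|x|^2)$ yields $u(x) \sim \tilde u(0)\,|x|^{\alpha^*}$ as $x \to \infty$. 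If $\tilde u(0) = 0$, the refined asymptotics of Theorem \ref{thm:asymp3} give $\tilde u(y) \sim c|y|^{\alpha^*}$ for some $c \geq 0$, which unwinds to $u(x) \to c$, i.e., a removable singularity at infinity in the sense of the theorem.

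The main technical obstacle is the explicit bookkeeping in the chain-rule computation producing $\tilde V$ in the case $p > d$, together with the verification that $\tilde V$ satisfies the hypotheses required by the interior results. Once this calculation is secured, translating the dichotomy on $\tilde u$ at the origin back to the dichotomy on $u$ at infinity is purely mechanical, and the cases $p > d$ and $p = d$ both fall out.
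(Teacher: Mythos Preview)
Your reduction for $p=d$ is correct and coincides with the paper's Remark~\ref{rem:p=d}: conformality of the $d$-Laplacian makes $\tilde u(y)=u(y/|y|^2)$ a positive $d$-harmonic function in a punctured ball, and Theorem~\ref{thm:asymp} finishes the job.

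For $p>d$ there is a genuine gap. The $p$-Kelvin transform $\tilde u(y)=|y|^{\alpha^*}u(y/|y|^2)$ does \emph{not}, for $p\notin\{2,d\}$, send $p$-harmonic functions to solutions of an equation of the form $-\Delta_p\tilde u+\tilde V\,|\tilde u|^{p-2}\tilde u=0$ with a fixed potential. A direct computation shows that $-\Delta_p\tilde u$ acquires first-order (drift) terms in $\nabla\tilde u$ that cannot be absorbed into a zero-order potential; the identity you anticipate is valid for radial $u$ (where the transform indeed swaps $1$ and $v_{\alpha^*}$) but fails off the radial class. If instead one \emph{defines} $\tilde V(y):=-(\Delta_p\tilde u)(y)/\tilde u(y)^{p-1}$ for the particular $\tilde u$ at hand, this $\tilde V$ depends on the unknown solution, and scaling together with the gradient estimate $|\nabla u|\le C u/|x|$ yields only a Fuchsian bound $|\tilde V(y)|\le C|y|^{-p}$ --- precisely the borderline where (\textbf{C1}) and (\textbf{C2}) fail and Theorems~\ref{thm:main} and~\ref{thm:asymp3} do not apply (cf.\ Remark~\ref{rem:Fuchs} and Example~\ref{ex:Hardy}). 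So the step you flag as ``the main technical obstacle'' is in fact an obstruction, not a bookkeeping exercise.

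Note also that the paper does not reprove Theorem~\ref{thm:asymp2} here; it is quoted from~\cite{PF}, where a \emph{modified} Kelvin transform is used with tools predating the present paper. Your high-level strategy is therefore in the right spirit, but the route through Theorems~\ref{thm:main} and~\ref{thm:asymp3} requires the transformed equation to lie in the $Q'$ framework, and it does not.
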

\begin{remark}\label{rem:p=d}{\em
The case $p=d$ in Theorem~\ref{thm:asymp2} clearly follows from Theorem~\ref{thm:asymp} using the conformality of the $d$-Laplacian.
} \end{remark}
We continue with two new general theorems concerning removable singularity. The first result deals with the {\em existence} of the limit. We prove
\begin{theorem}\label{thm:main}
Suppose that $V$ satisfies Condition~(\textbf{C1}) near $\zeta$. Let $u$ be a positive solution of the equation $Q'(u)=0$ in a punctured neighborhood of $\gz$. Then
  \begin{equation}\label{eq:ex31}
\lim_{x\to\zeta} u(x) =\ell,
\end{equation}
where $0\leq \ell\leq \infty$.

More precisely, for $p \neq d$ we have in the classical cases ($p<d$ and $\zeta = 0$, or $p>d$ and $\zeta = \infty$) that
$0<\ell\leq\infty$, and in the nonclassical cases ($p>d$ and $\zeta =0$, or $p<d$ and $\zeta = \infty$) we have $0\leq \ell < \infty$.

Assume further that $V$ satisfies Dini's condition~(\textbf{C2}) near $\gz$, then in the classical cases there are positive solutions satisfying \eqref{eq:ex31} with $0<\ell<\infty$, and with $\ell=\infty$, while in the nonclassical cases there are positive solutions satisfying \eqref{eq:ex31} with $0<\ell<\infty$, and with $\ell=0$.
\end{theorem}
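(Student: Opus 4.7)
The plan is to prove the three assertions in succession: (i) existence of the limit $\ell\in[0,\infty]$, (ii) the one-sided restriction distinguishing classical from nonclassical, and (iii) construction of examples realizing each possible value of $\ell$ under Condition~(\textbf{C2}).

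For (i), set $M(r):=\max_{|x|=r}u(x)$ and $m(r):=\min_{|x|=r}u(x)$. By Lemma~\ref{C1C@weakFuchs}, Condition~(\textbf{C1}) guarantees that $V$ has a weakened Fuchsian singularity at $\zeta$, and a standard dyadic rescaling combined with the Harnack inequality on the fixed annulus $\{1/2\le|y|\le 2\}$ yields $M(r)\le C\,m(r)$ with $C$ independent of $r$ near $\zeta$. The second ingredient is the new three-spheres inequality announced in the abstract: I expect it to yield a convexity-type estimate for $M(r)/v_{\alpha^*}(r)$ (or for $M(r)/|\log r|$ when $p=d$) as a function of scale, with defect controlled by a Kato-type integral of the form $\bigl|\int_r^{2r} t^{\alpha^*}\bigl|\int_a^t g(s)\,s^{d-1-p}\,\ds\bigr|^{1/(p-1)}\,\frac{\dt}{t}\bigr|$. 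Summability of this defect over dyadic scales as $r\to\zeta$ is exactly what Condition~(\textbf{C1}) supplies, so $M(r)$ has a limit in $[0,\infty]$; the uniform Harnack inequality then transfers this to $m(r)$, yielding $\lim_{x\to\zeta}u(x)=\ell$.

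For (ii), I would invoke the ratio-limit theorem of \cite{PF}. In the classical case, $v_{\alpha^*}\to\infty$ at $\zeta$, and by Lemma~\ref{lem:pos} there is a positive solution $\tilde v$ of $Q'(\tilde v)=0$ near $\zeta$ with $\tilde v\asymp v_{\alpha^*}$. The ratio-limit theorem gives that $u/\tilde v$ has a limit $c\in[0,\infty]$ at $\zeta$; if $\ell=0$, then $c=0$, so $u\le\varepsilon\,\tilde v$ on the spheres $\{|x|=r_n\}$ for $r_n\to\zeta$ and every $\varepsilon>0$. Fixing an outer sphere and letting $r_n\to\zeta$, the comparison principle on the annulus $\{r_n<|x|<R\}$ forces $u\le\varepsilon\,\tilde v$ throughout the punctured neighborhood, hence $u\equiv 0$, contradicting positivity. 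In the nonclassical case the roles reverse: $\tilde v\to 0$ at $\zeta$, and if $\ell=\infty$ then applying the ratio-limit theorem to $\tilde v/u$ yields a contradiction with a barrier comparison between $u$ and a large positive constant, which can be shown to be a supersolution of $Q'(w)=0$ on a small punctured neighborhood of $\zeta$ using Condition~(\textbf{C1}) together with a Wolff-potential barrier.

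For (iii), I would use a Perron-type construction on the annuli $r_n<|x|<R$, prescribing inner boundary data that realize the desired limit: data of the form $n\,v_{\alpha^*}$ in the classical case yields, after passing to $r_n\to\zeta$, a solution with $\ell=\infty$, while bounded data yields $0<\ell<\infty$; the dual construction in the nonclassical case (exterior problem, or via a Kelvin-type inversion when $\zeta=\infty$) produces solutions with $\ell=0$ and with $0<\ell<\infty$. Convergence of the Perron iterates as $r_n\to\zeta$ is where Dini's condition~(\textbf{C2}) enters, since it supplies the summability of the logarithmic correction generated by the inhomogeneity $V$ and lets one prescribe the value of the limit. The main obstacle I anticipate is the three-spheres inequality itself: superposition fails for the $p$-Laplacian, so obtaining a sharp convexity estimate whose defect is exactly the Kato integral appearing in~(\textbf{C1}) is delicate, and it is that inequality which converts local oscillation control into the global existence-of-limit statement underlying the entire theorem.
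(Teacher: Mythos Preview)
Your plan has the ingredients in roughly the wrong places, and two of the steps contain genuine gaps.

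For part (i), the paper's argument is far simpler than the three-spheres route you sketch. Under Condition~(\textbf{C1}) one first constructs an explicit positive solution $v$ with $\lim_{x\to\zeta}v(x)=1$: the Wolff potential $W_G$ satisfies $-\Delta_p(1\pm C W_G)=\pm C^{p-1}G$, so $1\pm C W_G$ are sub/supersolutions of $Q'(w)=0$ near $\zeta$, and Perron's method produces $v$. Then the ratio-limit theorem of \cite{PF} applied to $u/v$ gives the existence of $\ell$ directly. Your proposed approach cannot work as stated, because the paper's three-spheres inequality (Theorem~\ref{thm:tsi}) takes $\lim_{x\to\zeta}u(x)\in\{0,\infty\}$ as a \emph{hypothesis}; it is used only in the contradiction argument of part (ii), not to establish that the limit exists.

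For part (ii), both halves of your argument break down. In the classical case, your comparison $u\le\varepsilon\tilde v$ fails on the fixed outer sphere $|x|=R$ once $\varepsilon$ is small, so you cannot conclude $u\equiv 0$. The paper instead invokes a Lindqvist-type uniqueness result: any two positive solutions bounded by $C\min\{1,|x|^{\alpha^*}\}$ near $\zeta$ are asymptotically equivalent, so a solution with $\ell=0$ would have to satisfy $u\sim 1$, a contradiction. In the nonclassical case, a large constant $C$ is \emph{not} a supersolution unless $V\ge 0$, since $Q'(C)=V\,C^{p-1}$; your barrier comparison is therefore invalid. The paper argues by contradiction: if $\ell=\infty$, the concave three-spheres inequality for $m(r)$ (case 2.1 or 2.4 of Theorem~\ref{thm:tsi}) gives, after fixing $r_2,r_3$ and sending $r_1\to\zeta$, a uniform bound $m(r_1)\le \tilde W(r_2)\,m(r_3)/(\tilde W(r_2)-\tilde W(r_3))$, contradicting $m(r_1)\to\infty$. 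This is the one place the three-spheres theorem is actually used.

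For part (iii), your Perron scheme is in the right spirit but more elaborate than needed: Lemma~\ref{lem:green} already exhibits explicit sub/supersolutions $1\pm CW_G$ and $v_{\alpha^*}\mp CU_G$ (the latter using (\textbf{C2})), from which a single sub-supersolution step yields the solutions $u_{\mathrm{small}}^{(\zeta)}$ and $u_{\mathrm{large}}^{(\zeta)}$ of Lemma~\ref{lem:0sol} with the required limits.
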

\begin{remarks}\label{rem:Hardy}{\em
1. Serrin \cite{Serrin2} proved  Theorem~\ref{thm:main} for equations in divergence form without a potential term.

\vskip 3mm

2. If $p<d$ and $\gz=0$, then the finiteness of the limit implies that the
singularity is removable (see Corollary~\ref{thm:remov}).
This statement is not true for $p>d$ as can be seen from the case of the
$p$-Laplace equation and the ``fundamental solution" $v_{\alpha^*}$ (see Example~\ref{ex:pl}).

\vskip 3mm

3. The proof of Theorem~\ref{thm:main} in the nonclassical cases is based on a new general three-spheres theorem which is valid near $\gz$ (see Theorem~\ref{thm:tsi}). For related three-spheres theorems for elliptic PDEs see for example \cite{Agmon1,Brum,Landis,Mikl,PW,V} and the references therein.
}
\end{remarks}

\begin{example}{\em
\label{ex:Hardy}
The statements of Theorem~\ref{thm:main} do not hold true if $V$ has a Fuchsian type singularity at $\gz$. Indeed, consider the Hardy potential $V(x)=\gl |x|^{-p}$. So, $V$ has Fuchsian type singularities at $\zeta =0$ and  $\zeta =\infty$.
By Hardy's inequality  \eqref{Hardy_fun}, the equation
 \begin{equation}\label{eq:hardy}
    -\Delta_p(u)-\lambda \frac{|u|^{p-2}u}{|x|^p}=0
   \end{equation}
  admits a positive solution near $\gz$   (and  also in  $\mathbb{R}^d\sm \{0\}$) if and only if  $$\lambda \leq c_H:=\left|\frac{p-d}{p}\right|^p.$$
Moreover, for $\lambda = c_H$, Equation \eqref{eq:hardy} on $\mathbb{R}^d\sm \{0\}$ admits a unique (up to a multiplicative constant) positive (super)solution  which is given by $u(x)=|x|^{\gamma_*}$, where $\gamma_*:=(p-d)/p$.

On the other hand, if $\lambda < c_H$, then \eqref{eq:hardy} on $\mathbb{R}^d\sm \{0\}$ has two positive (radial) solutions of the form $v_\pm(x):=|x|^{\gamma_\pm(\lambda)}$, where
$\gamma_-(\lambda)<\gamma_*<\gamma_+(\lambda)$, and $\gamma_\pm(\lambda)$ are solutions of the transcendental 	 equation (cf. \eqref{eq:green1})
$$-\gamma |\gamma|^{p-2}[\gamma(p-1)+d-p]=\lambda.$$
In particular, the statements of Theorem~\ref{thm:main} do not hold true for \eqref{eq:hardy} with $\gl\neq 0$.
On the other hand, since Hardy's potential is radially symmetric, the ratio limit theorem holds true for positive solutions of \eqref{eq:hardy} near $\gz$ \cite{PF}.
} \end{example}

Our next result extends the results of theorems~\ref{thm:asymp} and \ref{thm:asymp2} and gives the {\em asymptotic behavior } of positive solutions near $\gz$ in the {\em nonclassical cases} ($p>d$ and $\zeta =0$, or $p<d$ and $\zeta = \infty$) under the additional assumption that the potential $V$ is {\em integrable} near the singular point $\gz$. Recall (Remark~\ref{assumptions}.) that conditions (\textbf{C1}) and (\textbf{C2}) hold under these assumptions.
\begin{theorem}\label{thm:asymp3}
Suppose that $V$ has a Fuchsian type singularity and is integrable near $\zeta$.
Assume that  $u$ is a positive solution of the equation $Q'(u) = 0$  in
the punctured ball $B_R\sm \{0\}$ and $p >d$ (respectively, $B_R^*$ and $p <d$) for some $R>0$.
Then
$$
\mbox{either }\;\;  0 < \lim_{x \to \zeta} u(x) < \infty, \quad \mbox{or }\;\; u(x) \;\underset{x\to \gz}{\sim} |x|^{\alpha^*}.
$$
\end{theorem}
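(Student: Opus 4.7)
Since $V$ is integrable and Fuchsian near $\zeta$ and we are in the nonclassical case, the third item of Remark~\ref{assumptions} ensures $V$ satisfies (\textbf{C1}) and (\textbf{C2}), hence Theorem~\ref{thm:main} applies: the limit $\ell := \lim_{x\to\zeta} u(x)$ exists in $[0,\infty)$. If $\ell > 0$ the first alternative of the dichotomy holds, so it remains to treat $\ell = 0$ and show $u(x) \sim A|x|^{\alpha^*}$ for some $A > 0$. I focus on $\zeta = 0$, $p > d$; the case $\zeta = \infty$, $p < d$ is analogous.

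The strategy is to apply the ratio-limit theorem of \cite{PF} against a fundamental-type positive solution $\Phi$ of $Q'(\Phi) = 0$ with $\Phi(x) \sim |x|^{\alpha^*}$ near $0$. I first construct $\Phi$ perturbatively: using $v_{\alpha^*}(x) = |x|^{\alpha^*}$ of Example~\ref{ex:pl} as a leading order, the Fuchsian bound and the integrability of $V$ allow solving Dirichlet problems for $Q'$ in annuli $r_n < |x| < R$ with boundary data matching $|x|^{\alpha^*}$, and passing to the limit $r_n \to 0$ via Harnack and $C^{1,\alpha}$-estimates produces $\Phi$ positive in a punctured neighborhood with the desired asymptotics (this mirrors the construction in Lemma~\ref{lem:green}). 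By Lemma~\ref{C1C@weakFuchs}, $V$ has a weakened Fuchsian singularity at $0$, so the ratio-limit theorem of \cite{PF} applies to $u$ and $\Phi$ and gives existence of $L := \lim_{x \to 0} u(x)/\Phi(x) \in [0,\infty]$.

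It remains to show $L \in (0,\infty)$. I do this via a blow-up analysis: set $u_n(y) := u(R_n y)/m_n$ with $m_n := \sup_{|x|=R_n} u$ and $R_n \to 0$. A direct computation shows $u_n$ satisfies $Q'_{V_n}(u_n) = 0$ with rescaled potential $V_n(y) = R_n^p V(R_n y)$, which by Lemma~\ref{C1C@weakFuchs} converges to $0$ in the weak-$*$ topology of $L^\infty_{\mathrm{loc}}$. Harnack together with $C^{1,\alpha}_{\mathrm{loc}}$-regularity extracts a subsequential limit $u_\infty$ which is positive and $p$-harmonic on $\mathbb{R}^d \setminus \{0\}$; by the classification of Example~\ref{ex:pl} (extended to non-radial $u_\infty$ by a symmetrization/Liouville argument) one has $u_\infty(y) = A|y|^{\alpha^*} + B$ with $A, B \geq 0$ and $A + B = 1$ (from the normalization). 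The crux, and the main technical obstacle, is to rule out $B > 0$: combining the three-spheres theorem for $Q'$ near $\zeta$ (Theorem~\ref{thm:tsi}) with the hypothesis $\ell = 0$ should yield a quantitative upper bound on $u$ incompatible with a constant blow-up profile, thereby forcing $B = 0$ for every blow-up sequence. Once this is established, $u_\infty = |y|^{\alpha^*}$ is the unique blow-up limit, and the existence of $L$ from the ratio-limit theorem is upgraded to $L \in (0,\infty)$, equivalently $u(x) \sim A|x|^{\alpha^*}$, completing the proof.
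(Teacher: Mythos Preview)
Your reduction to the case $\ell=0$ and your plan to obtain the upper bound $u\le C|x|^{\alpha^*}$ via the three-spheres theorem are correct, and this part coincides with the paper's argument: letting $r_1\to\zeta$ in the convex inequality \eqref{eq:tsiM} gives $M(r_3)\le \dfrac{M(r_2)}{\tilde W(r_2)}\tilde W(r_3)$, and the integrability of $V$ gives $\tilde W\sim v_{\alpha^*}$ (Lemma~\ref{WolffLemma}), hence $u\le Cv_{\alpha^*}$ near $\zeta$ and $L<\infty$.

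The gap is in the final step, where you claim that ``$u_\infty=|y|^{\alpha^*}$ is the unique blow-up limit'' upgrades $L$ to $(0,\infty)$. It does not. The three-spheres bound indeed forces $B=0$ for every blow-up profile (since $u_n(y)\le \tilde W(R_n|y|)/\tilde W(R_n)\to|y|^{\alpha^*}$ for $|y|<1$), but knowing $u_\infty=|y|^{\alpha^*}$ only says that $M_u$ is regularly varying of index $\alpha^*$; it is entirely consistent with $M_u(r)=r^{\alpha^*}L(r)$ for a slowly varying $L(r)\to 0$, i.e.\ with $L=0$. Nothing in your blow-up scheme supplies a \emph{lower} bound $u\ge c|x|^{\alpha^*}$, and the relevant concave three-spheres inequality (case~2.1 of Theorem~\ref{thm:tsi}) is unavailable here since $\lim_{x\to 0}u=0$, not $\infty$.

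The paper closes this gap by a different mechanism: once $u\le Cv_{\alpha^*}$ is known, Theorem~\ref{lindqvist} (Lindqvist's energy identity for ``small'' solutions) applies to both $u$ and the reference solution $w\sim v_{\alpha^*}$ of Lemma~\ref{lem:0sol}, and yields directly $u\sim w\sim v_{\alpha^*}$. This uniqueness result is what replaces the missing lower bound; your blow-up/ratio-limit combination cannot substitute for it.
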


\begin{remarks}{\em
1. Suppose that  $\zeta =0$ and $p>d$, then theorems~\ref{thm:main} and \ref{thm:asymp3} seem to be new even for $V\in L^\infty(B_R)$.

\vskip 3mm

2. The integrability assumption in Theorem~\ref{thm:asymp3} seems to be too restrictive; we believe that
the assertion of the theorem is still valid under assumptions (\textbf{C1}) and (\textbf{C2}).
This is indeed the case for example if $p=2 < d$ and $\zeta = \infty$ (see for example \cite{GZ,JV,LS,LSS,P}).
}
\end{remarks}

The outline of the paper is as follows. In the next section we recall
some notions and results we need throughout the paper. In particular, we present  a uniform Harnack inequality and a ratio-limit theorem for positive solutions defined
near the singularity, and we introduce Wolff's potential near $\gz$.
 In Section~\ref{sec:pos} we prove the existence of two classes of positive
solutions of the equation $Q'(u)=0$ near $\gz$, the small and the large one, with asymptotic behaviors similar
to those of the $p$-Laplace equation.
In Section~\ref{sec:small} we obtain the asymptotic behavior of small solutions near $\gz$ for the case $p \neq d$.
The proof uses Lindqvist's method in \cite{Lindqvist}.
In Section~\ref{sec:tsi} we state and prove a three-spheres theorem
that holds true for certain positive sub/supersolutions near isolated singular points. Using this three-spheres theorem and the
asymptotic behavior of small solutions, we prove in Section~\ref{sec:proofs} Theorem~\ref{thm:main}. Section~\ref{sec:asymp} is devoted to the proof of the asymptotic behavior of singular solutions (theorems~\ref{thm:asymp} and ~\ref{thm:asymp3}). In particular, Theorem~\ref{thm:asymp3} follows from the asymptotic behavior of small solutions and our three-spheres theorem. We conclude the paper in Section~\ref{sec:app} with some applications to a positive Liouville theorem in $\mathbb{R}^d$, and to the behavior near an interior isolated singularity of positive solutions of minimal growth in a neighborhood of infinity in $\Omega$.

\section{Preliminaries} \label{sec2}
The following notations and conventions will be used throughout the paper. We denote by $B_R(x_0)$ (respectively, $S_R(x_0)$) the open ball (respectively, the spheres) of radius $R$ centered at $x_0$, and let $B_R:=B_R(0)$, $S_R:=S_R(0)$, and $B_R^*:=\mathbb{R}^d\sm \overline{B_R}$.
We write $\Omega_1 \Subset \Omega_2$ if $\Omega_2$ is open, $\overline{\Omega_1}$ is
compact and $\overline{\Omega_1} \subset \Omega_2$.

For a function $u$ defined in $\{x\mid 0\leq R_1<|x|<R_2\leq \infty\}$, we use the notations
$$m_u(r):=\inf_{x \in S_r} u(x), \qquad M_u(r):=\sup_{x \in S_r} u(x) \quad \forall\; R_1<r<R_2.$$
The subscript $u$ in the notations $m_u$ and $M_u$ will be omitted when there is no danger of confusion.
Let $f,g \in C(D)$ be nonnegative functions, we denote $f\asymp g$ on
$D$ if there exists a positive constant $C$ such that
$$C^{-1}g(x)\leq f(x) \leq Cg(x) \qquad \mbox{ for all } x\in D.$$
By $f\underset{x\to \zeta}{\sim} g$ we mean that $$
\lim_{x\to \zeta}\frac{f(x)}{g(x)}= C$$ for some positive constant
$C$. Finally, for a radial function $f$ defined in a punctured neighborhood of $\gz$, we use the notation $\tilde{f}(|x|):=f(x)$.

\vskip 4mm

A function
$v\in W^{1,p}_{\mathrm{loc}}(\Omega)$ is said to be a {\em (weak) solution} of the equation $$Q'(u)=-\pl (u) + V |u|^{p-2} u = 0$$ in a domain $\Omega$ if
 \begin{equation} \label{solution} \int_\Omega (|\nabla v|^{p-2}\nabla
v\cdot\nabla\varphi+V|v|^{p-2}v\varphi)\,\mathrm{d}x=0 \qquad \forall \varphi\in\core.
\end{equation}
We say that a real function $v\in C^1_{\mathrm{loc}}(\Omega)$ is a
{\it  supersolution} (respectively, {\it  subsolution}) of
the equation $Q'(u)=0$ in $\Omega$ if for every nonnegative $\varphi\in\core$ we have
 \begin{equation}\label{supersolution}
\int_\Omega (|\nabla v|^{p-2}\nabla
v\cdot\nabla\varphi+V|v|^{p-2}v\varphi)\,\mathrm{d}x\geq 0 \;\;\mbox{ (respectively, }\leq 0\mbox{).}
\end{equation}

Recall the definition of the Wolff potential on $\mathbb{R}^d$ \cite{Martio}.  For $s > 1$, $\ga > 0$ with $0 < \ga s < d$, and a measure $\gm$, the {\em Wolff potential} is defined for $0<\gr \leq \infty$ by:
$$W^\gr_{\ga,s}\gm(x) := \int_0^{\gr} \left(\frac{\gm(B(x,t))}{t^{d-\ga s}}\right)^{1/(s-1)}
\frac{\mathrm{d}t}{t}\,.$$

Suppose that  $G: \mathbb{R}^+ \to \mathbb{R}^+$, $G(s) = g(s)|s|^{-p}$,   satisfies the Kato type Condition~(\textbf{C1}). Motivated by the above definition (with $\ga=1$, and $s=p$), we define the {\em Wolff potential of $G$ around  $\gz$}  by \begin{equation}
\label{Eq:Wolff}
W_G^\gz(x) = W_G(x) := \frac{1}{|B_1|}\left| \int_\zeta^{|x|} t^{\alpha^*} \left( \int_{X_t} G(|y|) \,\mathrm{d} y \right)^{\frac{1}{p-1}}
\frac{\mathrm{d} t}{t} \right|,
\end{equation}
where the $d$-dimensional domain $X_t$  depends on $p$, $d$, and $\zeta$ and is given by
\vskip 4mm
\begin{center}
\begin{tabular}{|c|c|c|c|}
\hline
  & $p < d$ & $p>d$ & $p=d$ \\[3mm]
\hline
$\zeta = 0$ & $X_{t} = B_t$ & $X_{t} = B_1 \sm B_t$ & $X_t =B_t$ , \\[3mm]
\hline
$\zeta = \infty$ & $X_{t} = B_t \sm B_1$ & $X_{t} = B_t^*$ & $X_t = B_t^*$.\\[3mm]
\hline
\end{tabular}
\end{center}
By integration over the spherical variables, we get the expression
\begin{equation}\label{eq:W_G2}
W_G(x) = \left| \int_\zeta^{|x|} t^{\alpha^*} \left|\int_a^t \frac{g(s)}{s^p} s^{d-1}\, \mathrm{d} s \right|^{\frac{1}{p-1}} \frac{\mathrm{d} t}{t} \right|=\left| \int_\zeta^{|x|} \left|t^{1-d}\int_a^t \frac{g(s)}{s^p} \,s^{d-1} \ds \right|^{\frac{1}{p-1}} \dt \right|< \infty,
\end{equation}
where as above,   $G(|x|) = g(|x|)|x|^{-p}$, and $a$ is given as in Condition~(\textbf{C1}) by
\vskip 4mm
\begin{center}
\begin{tabular}{|c|c|c|c|}
\hline
  & $p < d$ & $p>d$ & $p=d$ \\[3mm]
\hline
$\zeta = 0$ & $a = 0$ & $a = 1$ & $a=0$, \\[3mm]
\hline
$\zeta = \infty$ & $a = 1$ & $a = \infty$ & $a=\infty$.\\[3mm]
\hline
\end{tabular}
\end{center}
In particular, $W_G$ is well defined for $G$ in Kato's class near $\gz$.
In the sequel we use our convention $\tilde{W}_G(|x|):=W_G(x)$.
It turns out that $W_G$ solves the nonhomogeneous $p\,$-Laplace equation near $\gz$.
\begin{lemma}
\label{WolffLemma}
Suppose that $G$ satisfies Condition~(\textbf{C1}) near $\gz$. Then near $\gz$ we have
$$-\pl (W_G(x))  = \left\{
                                              \begin{array}{rl}
                                                -G(|x|) & \quad\hbox{ in the classical cases},
\\[2mm]
                                                G(|x|) & \quad \hbox{ in the nonclassical cases},
                                              \end{array}
                                            \right.
  $$
where $G(|x|)=\frac{g(|x|)}{|x|^p}$.

\vskip 2mm

Moreover, $W_G$ has the following properties
\begin{enumerate}
\item $\lim_{x \to \zeta} W_G(x) = 0$.
\item $\pd \tilde W_G(r)/\pd r > 0 $ if $\zeta =0$.
\item $\pd \tilde W_G(r)/\pd r < 0 $ if $\zeta = \infty$.
\end{enumerate}
Furthermore,  in the nonclassical cases, if $G$ is integrable, then
$$
W_G(x) \;\underset{x\to \gz}{\sim}\; v_{\alpha^*}(x).
$$
\end{lemma}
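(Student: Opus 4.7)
Since $W_G$ is radial, the natural tool is the radial $p$-Laplacian formula \eqref{eq:green}. The plan is to differentiate the representation \eqref{eq:W_G2} directly, exploit the clean simplification of the $|v'|^{p-2}v'$ factor, and read off all the claims (properties~(1)--(3), the main PDE identity, and the asymptotic statement) from this single calculation, while keeping careful track of two competing signs.

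First I would write $I(t):=\int_a^t g(s)\,s^{d-1-p}\ds$ and $h(t):=t^{1-d}I(t)$, so that $\tilde W_G(r)=\left|\int_\gz^r |h(t)|^{1/(p-1)}\dt\right|$. The fundamental theorem of calculus then gives
$$\tilde W_G'(r)=\ge_\gz\,|h(r)|^{1/(p-1)},\qquad \ge_\gz:=\begin{cases}+1 & \gz=0,\\ -1 & \gz=\infty.\end{cases}$$
This immediately yields properties~(2) and~(3), and property~(1) is just the vanishing of the integral as $r\to\gz$.

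Next, using $(|h|^{1/(p-1)})^{p-2}\cdot|h|^{1/(p-1)}=|h|$, I compute $|\tilde W_G'|^{p-2}\tilde W_G'=\ge_\gz|h(r)|$, so
$$r^{d-1}|\tilde W_G'|^{p-2}\tilde W_G' = \ge_\gz\,|I(r)| = \ge_\gz\gs_\gz\, I(r),\qquad \gs_\gz:=\mathrm{sgn}\,I(r).$$
Crucially, $\gs_\gz$ is a \emph{constant} sign throughout the punctured neighborhood of $\gz$ because $a$ is tabulated so that $r$ stays on one fixed side of $a$. Differentiating and using $I'(r)=g(r)r^{d-1-p}$ together with \eqref{eq:green} then gives $-\pl\tilde W_G(r)=-\ge_\gz\gs_\gz\,G(r)$, and a case-by-case check of the four subcases (classical $\gz=0$: $a=0$, so $\gs_\gz=\ge_\gz=+1$; classical $\gz=\infty$: $a=\infty$, so $\gs_\gz=\ge_\gz=-1$; nonclassical $\gz=0$: $a=1,\,r<1$, so $\gs_\gz=-1,\ge_\gz=+1$; nonclassical $\gz=\infty$: $a=1,\,r>1$, so $\gs_\gz=+1,\ge_\gz=-1$) confirms $\ge_\gz\gs_\gz=+1$ in the classical cases and $=-1$ in the nonclassical cases, which is exactly the claimed identity; the borderline $p=d$ is absorbed into the classical column.

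For the asymptotic statement in the nonclassical cases, integrability of $G$ near $\gz$ together with polar coordinates gives $\int_{X_t}G(|y|)\dy\to L\in(0,\infty)$ as $t\to\gz$, equivalently $|I(t)|\to L/|S^{d-1}|$. Hence $|h(t)|^{1/(p-1)}\sim C\,t^{(1-d)/(p-1)}=C\,t^{\ga^*-1}$ as $t\to\gz$, and integrating from $\gz$ to $r$ (the integral converges because $\ga^*>0$ when $\gz=0$ and $\ga^*<0$ when $\gz=\infty$) yields $\tilde W_G(r)\sim C'r^{\ga^*}$, which is exactly $W_G(x)\sim v_{\ga^*}(x)$. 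The only genuine obstacle throughout is the sign bookkeeping: two competing signs ($\ge_\gz$ from the direction of integration and $\gs_\gz$ from the sign of $I(r)$) must combine correctly, and the tabulated choice of $a$ is precisely what makes $\gs_\gz$ constant and the product $\ge_\gz\gs_\gz$ cleanly switch between the classical and nonclassical regimes.
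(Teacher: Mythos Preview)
Your proof is correct and follows essentially the same approach as the paper's own proof, which is quite terse: the paper merely labels the PDE identity a ``straightforward computation,'' derives property~(1) by dominated convergence, properties~(2)--(3) by differentiation, and the asymptotic by l'H\^{o}pital's rule. You have simply written out that straightforward computation in full, with careful sign bookkeeping via the constants $\ge_\gz,\gs_\gz$; the paper omits all of this. Your asymptotic argument (integrating $|h(t)|^{1/(p-1)}\sim Ct^{\ga^*-1}$) is exactly l'H\^{o}pital in disguise, since both $\tilde W_G(r)$ and $r^{\ga^*}$ vanish at $\gz$ and the ratio of their derivatives tends to a positive constant---so there is no methodological difference here either.
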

\begin{proof}
The first statement is a  straightforward computation (cf. \cite{Qi}).
By Lebesgue dominated convergence theorem we have $\lim_{x \to \zeta} W_G(x) = 0$. Assertions \textit{2} and \textit{3} are obtained by differentiation.
Finally, the last assertion of the lemma follows from \eqref{eq:W_G2}  by l'H\^{o}pital's rule
and the integrability assumption.
\end{proof}

We introduce another type of potential for functions $G$ satisfying (\textbf{C2}).
\begin{equation}\label{UpotentialY}
U_G(x):=
\left\{ \begin{array}{lr}
\displaystyle{\alpha^* \int_{b}^{v_{\alpha^*}(x)} \int_\gz^{v_{\alpha^*}^{-1}(\gt)} \frac{g(s)}{s} \,\mathrm{d} s \,\mathrm{d} \tau} & p \neq d, \\[6mm]
\displaystyle{\int_{1}^{v_{\alpha^*}(x)} \int_\gz^{v_{\alpha^*}^{-1}(\gt)} \frac{g(s)}{s} |\log s|^{\,d-1} \,\mathrm{d} s \,\mathrm{d} \tau} & p =d,
\end{array} \right.
\end{equation}
where $b$ is given by
\vskip 4mm
\begin{center}
\begin{tabular}{|c|c|c|}
\hline
  & $p < d$ & $p>d$ \\[3mm]
\hline
$\zeta = 0$ & $b = 1$ & $b = 0$, \\[3mm]
\hline
$\zeta = \infty$ & $b = 0$ & $b = 1$.\\[3mm]
\hline
\end{tabular}
\end{center}
We give up of the positivity of $U_G$, it plays no role in the sequel.
It can be verified easily that $U_G$ satisfies the following properties.
\begin{lemma}\label{Ulemma}
Let $G$ satisfy (\textbf{C2}), then $U_G$ is well defined and
$$
\lim_{x \to \zeta} \frac{U_G(x)}{v_{\alpha^*}(x)}
= \lim_{x \to \zeta} \pd_{v_{\alpha^*}} U_G(x) = 0.
$$
Moreover,
$$
\left(\pd^2_{v_{\alpha^*}} U_G\right)(x) = \left\{
                                             \begin{array}{ll}
                                               \dfrac{g(|x|)}{v_{\alpha^*}(|x|)} & \quad p\neq d ,\\[6mm]
                                               \dfrac{g(|x|)}{(v_{\alpha^*}(|x|))^{1-d}} & \quad  p=d.
                                             \end{array}
                                           \right.
$$
\end{lemma}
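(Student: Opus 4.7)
The proof is essentially a change of variables followed by straightforward differentiation and an application of L'Hôpital's rule. The idea is to view $U_G$ as a function of the single real variable $\tau := v_{\alpha^*}(x)$. Let $\phi := v_{\alpha^*}^{-1}$, so that $x \to \gz$ corresponds to $\tau \to b$, where $b\in\{0,1,\infty\}$ is the lower limit of the outer integral (observe that $b$ has been chosen exactly so that $\phi(b)=\gz$ in each of the four cases; for $p=d$ the analogous base point is $1$).

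\textbf{Step 1 (Well-definedness).} Set, for $p\neq d$,
$$h(\tau) := \int_{\gz}^{\phi(\tau)} \frac{g(s)}{s}\,\mathrm{d}s,$$
and analogously with the extra factor $|\log s|^{d-1}$ for $p=d$. Condition~(\textbf{C2}) is precisely the statement that $h(\tau)$ is a finite, bounded, continuous function of $\tau$ near $b$, with $h(\tau)\to 0$ as $\tau\to b$ (since $\phi(\tau)\to\gz$). Hence the outer integral $\int_b^{v_{\alpha^*}(x)} h(\sigma)\,\mathrm{d}\sigma$ defining $U_G$ is finite, which yields well-definedness.

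\textbf{Step 2 (First derivative and its limit).} By the fundamental theorem of calculus applied to the outer integral in $\tau$,
$$\pd_{v_{\alpha^*}} U_G(x) = \alpha^* h(\tau) \qquad (p\neq d),$$
with the analogous expression (without the $\alpha^*$) when $p=d$. Since $h(\tau)\to 0$ as $\tau\to b$ by Step~1, this gives $\lim_{x\to\gz}\pd_{v_{\alpha^*}}U_G(x)=0$.

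\textbf{Step 3 (Ratio limit).} For $\lim U_G(x)/v_{\alpha^*}(x)=0$, apply L'Hôpital to the function of $\tau$. When $b=1$ the ratio is of the form $0/1$ and the limit is trivially zero. When $b=0$ (nonclassical cases) both numerator and denominator tend to $0$, and L'Hôpital yields $\alpha^* h(\tau) \to 0$. When $b=\infty$ (classical cases) the denominator tends to $\infty$, and either $U_G$ stays bounded (ratio $\to 0$ directly) or L'Hôpital for the $\infty/\infty$ form again gives the limit $\alpha^* h(\tau)\to 0$. The $p=d$ case is identical with the corresponding integrand.

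\textbf{Step 4 (Second derivative).} Differentiate $\pd_\tau U_G = \alpha^* h(\tau)$ once more using the chain rule:
$$\pd^2_{v_{\alpha^*}} U_G = \alpha^* \cdot \frac{g(\phi(\tau))}{\phi(\tau)}\cdot \phi'(\tau).$$
For $p\neq d$, differentiating $\phi(\tau)^{\alpha^*}=\tau$ yields $\phi'(\tau)=\phi(\tau)/(\alpha^*\tau)$; substituting cancels both $\alpha^*$ and $\phi(\tau)$, leaving $g(\phi(\tau))/\tau=g(|x|)/v_{\alpha^*}(|x|)$. For $p=d$ we have $\phi(\tau)=e^{\mp \tau}$ depending on $\gz$, so $|\phi'(\tau)|=\phi(\tau)$ and an analogous cancellation delivers $g(|x|)\,v_{\alpha^*}(|x|)^{d-1}=g(|x|)/v_{\alpha^*}(|x|)^{1-d}$.

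The computation is routine in every branch, and the only obstacle is organizational: one must consistently track the sign of $\alpha^*$, the value of $b$, and the orientation of $\phi$ in each of the four sub-cases $\{\gz=0,\infty\}\times\{p<d,p>d\}$, plus the separate treatment of $p=d$. Once the substitution $\tau=v_{\alpha^*}(x)$ is made, all four identities reduce to the same elementary calculus argument.
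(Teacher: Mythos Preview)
Your approach is the same as the paper's: direct differentiation in the variable $\tau=v_{\alpha^*}(x)$ for the derivative formulas, and l'H\^{o}pital together with (\textbf{C2}) (via dominated convergence) for the limits. One organizational slip to fix: in Step~3 you conflate the lower limit $b$ of the outer integral (which, per the paper's table, is always $0$ or $1$) with the limit of $\tau$ as $x\to\zeta$ (which is $0$ in the nonclassical cases and $\infty$ in the classical cases, never $1$); your ``$b=1$, ratio $0/1$'' case does not occur, and the classical cases have $b=1$ but $\tau\to\infty$, which you handle correctly under the mislabeled ``$b=\infty$'' heading.
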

\begin{proof}
The second statement follows by a direct computation, while the first follows by l'H\^{o}pital's rule  and
Lebesgue's dominated convergence theorem.
\end{proof}

We are now ready to prove Lemma~\ref{C1C@weakFuchs} claiming that each of our assumptions (\textbf{C1}) and (\textbf{C2}) implies that $V$ has a weak Fuchsian singularity at $\gz$.

\begin{proof}[Proof of Lemma~\ref{C1C@weakFuchs}]
We first prove the claim assuming (\textbf{C2}). Note that if $p=d$,  then (\textbf{C2}) implies that
$$\left|\int_\gz^1 \frac{g(s)}{s} \,\mathrm{d}s\right|<\infty.$$
Let $R_n\to \gz$. It is sufficient to prove that
$$|R_n|^p\frac{g(R_n|x|)}{|R_n|^p|x|^p}= \frac{g(R_n|x|)}{|x|^p}\underset{n\to \infty}{\longrightarrow} 0 \quad \mbox{ in  the weak$^*$ topology of  $L_{\mathrm{loc}}^{\infty}(\mathbb{R}^d\sm \{0\})$}.$$
So, it suffices  to prove that $g(R_n s) \to 0$ in the weak* topology
of $L^\infty_{\mathrm{loc}}(\mathbb{R}_+\sm \{0\})$.

By compactness, there is $g_\infty\in L^\infty_{\mathrm{loc}}(\mathbb{R}_+\sm \{0\})$
such that (up to a subsequence) $$g(R_n x)\;  \underset{n\to \infty}{\longrightarrow} \; g_\infty(x)$$ in the weak* topology
of $L^\infty_{\mathrm{loc}}(\mathbb{R}_+\sm \{0\})$.
So for any fixed $0<a<b<\infty$ we have
$$
\int\limits_a^b \frac{g(R_n s)}{s} \,\mathrm{d} s \;\underset{n\to \infty}{\longrightarrow}\; \int\limits_a^b \frac{g_\infty(s)}{s} \,\mathrm{d}s.
$$
On the other hand,
$$
\int\limits_a^b \frac{g(R_n s)}{s} \ds = \int\limits_{R_n a}^{R_n b} \frac{g(s)}{s} \ds \;\underset{n\to \infty}{\longrightarrow}\;  0
$$
by Lebesgue dominated convergence theorem. Hence $g_\infty = 0$.

Assume now that (\textbf{C1}) is satisfied. The structure of the proof is similar. Let $G_n(x):=R_n^pG(R_n x)$. We may suppose that
$$G_n(x) \;\underset{n\to \infty}{\longrightarrow}\; G_\infty(x)$$ in the weak* topology
of $L^\infty_{\mathrm{loc}}(\mathbb{R}_+\sm \{0\})$.
By Fatou's lemma we have
$$
W_{G_\infty}(x) \leq \liminf_{n \to \infty} W_{G_n}(x).
$$
On the other hand, by a direct computation we obtain
\begin{equation}\label{eq:wgn}
W_{G_n}(x) = \frac{1}{|B_1|} \left| \int_{\zeta}^{R_n |x|} t^{\alpha^*} \left( \int_{R_n X_{(t/R_n)}} G(|y|) \mathrm{d}y
\right)^{\frac{1}{p-1}} \frac{\mathrm{d} t}{t}\right|.
   \end{equation}
We note that (both in the classical cases and in the nonclassical cases) if  $x$ is near $\gz$ and  $t$ belongs to the interval of the integration in \eqref{eq:wgn}, then $R_n X_{(t/R_n)} \subset X_t$.
Consequently, for such $x$ we have
$$
W_{G_n}(x) \leq \frac{1}{|B_1|} \left| \int_{\zeta}^{R_n |x|} t^{\alpha^*} \left( \int_{X_{t}} G(|y|) \mathrm{d}y
\right)^{\frac{1}{p-1}} \frac{\mathrm{d} t}{t}\right|.
$$
 Therefore, our assumption (\textbf{C1}) and
Lebesgue dominated convergence theorem imply that $W_{G_n}(x) \to 0$. Hence,  we arrived at
$$
W_{G_\infty}(x) = 0
$$
near $\zeta$. Thus, $G_\infty(x) = 0$ near $\zeta$.
\end{proof}
Next, we present a simple sufficient condition ensuring the existence of positive solutions near $\gz$. We note that  Lemma~\ref{lem:green} implies that such positive solutions exist (for any $1<p<\infty$) if either condition (\textbf{C1}) or (\textbf{C2}) are satisfied.
\begin{lemma}\label{lem:pos}
Suppose that $p\neq d$ and that $V$ satisfies \eqref{CondC}  with  a continuous positive function $g$ satisfying
 $\;\lim_{x\to\gz}g(|x|)=0$. Then the equation $Q'(u)=0$ admits positive solutions in a punctured neighborhood of the singular point $\zeta$.
\end{lemma}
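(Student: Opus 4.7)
The plan is to construct a positive solution as the limit of weak solutions of Dirichlet problems on an annular exhaustion of a punctured neighborhood of $\gz$, using Hardy's inequality to control the potential term.

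Since $p\neq d$, the classical Hardy inequality
$$
\int_{\Real^d}\frac{|u|^p}{|x|^p}\dx \leq \frac{1}{c_H}\int_{\Real^d}|\grad u|^p\dx,\qquad c_H:=\left|\frac{d-p}{p}\right|^{p},
$$
holds for every $u\in W_0^{1,p}(\Real^d\sm\{0\})$. The hypothesis $g(|x|)\to 0$ as $x\to\gz$ allows me to choose a punctured neighborhood $U$ of $\gz$ on which $g(|x|)\leq\gd$ for any prescribed $\gd>0$. Fixing $\gd$ small enough, the pointwise bound $|V|\leq \gd/|x|^p$ combined with Hardy's inequality gives a coercive estimate
$$
J(u):=\frac{1}{p}\int_U\bigl(|\grad u|^p+V|u|^p\bigr)\dx \geq c\int_U|\grad u|^p\dx \qquad \forall\,u\in W_0^{1,p}(U),
$$
for some $c>0$, and in particular $J(u)>0$ on $W_0^{1,p}(U)\sm\{0\}$.

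Next, I exhaust $U$ by smooth annuli $A_n$ (say $A_n=\{1/n<|x|<R\}$ if $\gz=0$, where $R$ is the outer radius of $U$). On each $A_n$ the potential $V$ is bounded, and I solve the Dirichlet problem $Q'(u_n)=0$ in $A_n$ with $u_n=v_{\ga^*}$ on $\pd A_n$ by minimizing $J$ over the affine class $v_{\ga^*}+W_0^{1,p}(A_n)$. Existence of a minimizer comes from the direct method: the coercive estimate above together with weak lower semicontinuity of $J$ and the boundedness of $V$ on $\overline{A_n}$ guarantees that minimizing sequences stay bounded in $W^{1,p}(A_n)$, and the minimizer is a weak solution.

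Nonnegativity of $u_n$ is established by the splitting $u_n=u_n^+-u_n^-$: since $v_{\ga^*}>0$ on $\pd A_n$, the negative part $u_n^-$ lies in $W_0^{1,p}(A_n)\subset W_0^{1,p}(U)$ (after extension by zero). The disjoint-support identities $|\grad u_n|^p=|\grad u_n^+|^p+|\grad u_n^-|^p$ and $|u_n|^p=(u_n^+)^p+(u_n^-)^p$ give $J(u_n)=J(u_n^+)+J(u_n^-)$, and the coercivity estimate forces $J(u_n^-)>0$ unless $u_n^-\equiv 0$. By minimality $u_n^-\equiv 0$, hence $u_n\geq 0$, and the strong minimum principle for quasilinear elliptic equations with bounded coefficients yields $u_n>0$ in $A_n$.

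Finally, I fix an interior point $x_0\in U$ and normalize $w_n:=u_n/u_n(x_0)$. Harnack's inequality and the $C^{1,\gg}_{\mathrm{loc}}$ estimates of \cite[Theorem 7.4.1]{Puc} are uniform on every compact $K\csub U$, since $V\in L^\infty_{\mathrm{loc}}(U)$. A diagonal extraction produces a subsequence converging in $C^1_{\mathrm{loc}}(U)$ to a nonnegative weak solution $w$ of $Q'(w)=0$ in $U$ with $w(x_0)=1$, and Harnack gives $w>0$ throughout $U$. The technical crux is the nonnegativity step: it depends crucially on the strict coercivity of $J$ on $W_0^{1,p}(U)$ afforded by the decay $g\to 0$, and would break down for potentials with a genuine Fuchsian singularity at the Hardy threshold.
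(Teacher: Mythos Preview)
Your proof is correct and rests on the same key idea as the paper's: Hardy's inequality (valid since $p\neq d$) together with $g(|x|)\to 0$ forces the functional $Q$ to be nonnegative on $C_0^\infty$ of a sufficiently small punctured neighborhood of $\zeta$, and nonnegativity of $Q$ yields a positive solution. The paper simply cites \cite{PTalt} for this last implication (the Allegretto--Piepenbrink-type principle for the $p$-Laplacian), whereas you unpack it explicitly via annular exhaustion, Dirichlet minimization, the $u_n^+/u_n^-$ splitting for nonnegativity, and a Harnack convergence argument. Your route is more self-contained but otherwise not genuinely different; the paper's version is a two-line invocation of the same machinery.
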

\begin{proof}
For $\zeta=0$ (respectively, $\zeta=\infty$) and $p\neq d$, the lemma's assumption and Hardy's inequality
\begin{equation}\label{Hardy_fun}
    \int_{\mathbb{R}^d\setminus \{0\}} |\nabla u|^p \,\mathrm{d}x\geq
  \left|\frac{p-d}{p}\right|^p \int_{\mathbb{R}^d\setminus \{0\}} \frac{|u|^p}{|x|^p}\,\mathrm{d}x
 \qquad u\in C_0^\infty(\mathbb{R}^d\setminus \{0\}),
\end{equation}
imply that for $R$ small (respectively, large) enough the functional 
\begin{equation*}\label{Q3} Q(u):=\int_{\Gw}\left(|\nabla u|^p+V|u|^p\right)\,\mathrm{d}x
\end{equation*}
is nonnegative on $C_0^\infty(\Gw)$, where $\Gw=B_R\sm \{0\}$ (respectively, $\Gw=B_R^*$), and therefore, by \cite{PTalt}, the equation $Q'(u)=0$ admits positive  solutions in $B_R\sm \{0\}$ (respectively, $B_R^*$).

\end{proof}
We recall that positive super- and subsolutions satisfy the following weak comparison principle.
\begin{theorem}[Weak comparison principle \cite{Garcia}]\label{thm:wcp}
Let $V\in  L^\infty_{\mathrm{loc}}(\Omega)$, and let $\Omega'$ be a bounded $C^{1,\alpha}$ subdomain of a domain $\Omega$, such that $\Omega'\Subset \Omega$.
Assume that the equation $Q'(w)=0$ admits a positive solution in $\Omega$ and
suppose that $u,v\in C^1(\Omega')\cap C(\overline{\Omega'})$, $u,v\geq 0$ satisfy the following inequalities
\begin{eqnarray}\label{eq:wcp}
      Q'(u) \leq 0  & \mbox{in} & \Omega', \nonumber\\
      Q'(v) \geq 0 & \mbox{in} & \Omega', \\
 u \leq v & \mbox{on} & \bo \Omega' \nonumber.
\end{eqnarray}
Then $u \leq v$ in $\Omega'$.
\end{theorem}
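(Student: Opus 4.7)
The natural plan is a testing argument with $\varphi := (u-v)^+$. Since $u,v \in C(\overline{\Omega'}) \cap C^1(\Omega')$ and $u \le v$ on $\partial\Omega'$, we have $\varphi \in W_0^{1,p}(\Omega')$, $\varphi \ge 0$, and $\nabla\varphi = (\nabla u - \nabla v)\chi_{\{u>v\}}$. Testing the sub- and supersolution inequalities against $\varphi$ and combining them (add the first and subtract the second) gives
\begin{equation*}
\int_{\{u>v\}} \bigl(|\nabla u|^{p-2}\nabla u - |\nabla v|^{p-2}\nabla v\bigr)\cdot (\nabla u - \nabla v)\dx + \int_{\{u>v\}} V\bigl(u^{p-1} - v^{p-1}\bigr)(u - v)\dx \le 0.
\end{equation*}
The first integrand is pointwise nonnegative by the strict monotonicity of $z \mapsto |z|^{p-2}z$, providing a lower bound of order $\|\nabla\varphi\|_{L^p}^p$ for $p \ge 2$ (or a Simon-type bound for $1 < p < 2$). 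If $V \ge 0$, the second integrand is also nonnegative (since $t \mapsto t^{p-1}$ is increasing on $[0,\infty)$ and $u > v \ge 0$), so both integrals vanish, forcing $\nabla\varphi \equiv 0$ a.e.\ and $\varphi \equiv 0$ via the boundary trace.

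When $V$ changes sign, the hypothesis that $Q'(w) = 0$ admits a positive solution $\phi$ on $\Omega \supset \overline{\Omega'}$ becomes decisive. Via Picone's identity applied to $(\psi, \phi)$ together with the equation for $\phi$, one obtains the coercivity
\begin{equation*}
\int_{\Omega'}\bigl(|\nabla\psi|^p + V|\psi|^p\bigr)\dx \ge 0 \qquad \text{for all nonneg.\ } \psi \in W_0^{1,p}(\Omega'),
\end{equation*}
which is exactly the tool needed to absorb the negative part of $V$. The cleanest implementation I would attempt is a D\'iaz--Sa\'a-type substitution: retest the sub-inequality against $\eta_u := \bigl((u+\varepsilon)^p - (v+\varepsilon)^p\bigr)^+ /(u+\varepsilon)^{p-1}$ and the super-inequality against $\eta_v := \bigl((u+\varepsilon)^p - (v+\varepsilon)^p\bigr)^+ /(v+\varepsilon)^{p-1}$ (both nonnegative and admissible in $W_0^{1,p}(\Omega')$ since $u,v \ge 0$, $u,v \in C^1(\overline{\Omega'})$, and $u \le v$ on $\partial\Omega'$). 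Upon subtraction, the $V$-terms cancel as $\varepsilon \to 0^+$, while the gradient terms reassemble into the integral of a pointwise nonnegative Picone--Allegretto--Huang expression, forcing $\nabla\varphi \equiv 0$ and hence $\varphi \equiv 0$.

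The principal obstacle is the $\varepsilon \to 0^+$ limit passage when $u$ or $v$ may vanish on positive-measure subsets of $\Omega'$: both the admissibility of the D\'iaz--Sa\'a test functions and the Picone cancellation have to be handled carefully there. Uniform positivity of $\phi$ on the compactly contained $\overline{\Omega'} \Subset \Omega$, the bound $V \in L^\infty_{\mathrm{loc}}(\Omega)$, and the $C^1$-regularity of $u,v$ up to the boundary are exactly the ingredients that keep all integrals well-controlled in the approximation, allowing the cancellation of the potential terms and the monotonicity conclusion $\nabla\varphi \equiv 0$ to survive in the limit.
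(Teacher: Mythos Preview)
The paper does not prove this statement: Theorem~\ref{thm:wcp} is quoted from \cite{Garcia} (Garc\'{\i}a-Meli\'an and Sabina de Lis) and used as a black box, so there is no ``paper's own proof'' to compare against. Your sketch is a reasonable outline of the standard D\'iaz--Sa\'a/Picone route that underlies such comparison principles for sign-changing potentials, and is in the spirit of the cited reference; in particular, the role you assign to the existence of a positive solution in the larger domain $\Omega$ (to guarantee nonnegativity of the energy form on $W_0^{1,p}(\Omega')$) is exactly why that hypothesis appears in the statement.

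One caution: what you wrote is a \emph{strategy}, not a complete proof. The delicate points you flag---admissibility of the D\'iaz--Sa\'a test functions when $u$ or $v$ may vanish, and the $\varepsilon\to 0^+$ limit---are real technical issues, and a full argument has to handle them carefully (this is done in \cite{Garcia}). Since the present paper simply invokes the result, your level of detail is already more than the paper provides.
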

Our results hinges on a uniform Harnack estimates for positive solutions near an isolated Fuchsian-type singularity \cite{veron,PT}, and on a ratio-limit theorem for the quotients of any such two solutions near
a weak-Fuchsian singularity \cite[Theorem 2.6]{PF}. The following statements
are formulated for the potentials under consideration.
\begin{proposition}\label{prop:harnack}
Assume that $V$ has a Fuchsian type singularity at $\gz$. Fix $0<r_0<R$ (respectively, $r_0>R$).
Then there exists a constant $C=C(r_0,R,V,d,p)>0$ such that any  positive solution $u$ of the equation $Q'(u)=0$ in $B_R \sm \{0\}$ (respectively, $B_R^*$) satisfies the following uniform Harnack inequality
\begin{equation}
M_u(r) \leq C  m_u(r) \quad \mbox{ for all} \quad 0<r<r_0 \; \;\mbox{(respectively, $r>r_0$)}.
\label{eq:harnack}
\end{equation}
\end{proposition}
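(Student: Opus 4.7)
The plan is to derive the uniform Harnack inequality from the \emph{standard} (non-uniform) interior Harnack inequality for quasilinear equations with a bounded potential via a scaling argument that exploits the invariance of the Fuchsian condition under the dilation $x\mapsto rx$. This is how the estimate is obtained in the cited references \cite{veron,PT}, so I will only sketch the skeleton.

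First, for $r>0$, define the rescaled function $u_r(x):=u(rx)$ on the appropriate rescaled annulus. A direct computation (the chain rule applied to $\pl$) shows that $u_r$ is a positive solution of
$$-\pl(u_r) + V_r\,|u_r|^{p-2}u_r = 0,$$
where $V_r(x):=r^p V(rx)$. The key observation is that the Fuchsian condition $|x|^p|V(x)|\leq C_0$ is \emph{scale-invariant}: $|x|^p|V_r(x)|=|rx|^p|V(rx)|\leq C_0$. Consequently, on the fixed reference annulus $A:=\{1/2\leq|x|\leq 2\}$ we have $\|V_r\|_{L^\infty(A)}\leq 2^pC_0$ uniformly in $r$.

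Second, apply the classical local Harnack inequality for positive solutions of quasilinear equations with a bounded potential (\cite[Theorem~7.4.1]{Puc}) to $u_r$ on $A$. Because the Harnack constant there depends on $V$ only through $p$, $d$, and an $L^\infty$-bound on the potential on the given domain, and this bound is uniform in $r$ by the previous step, a finite covering of the unit sphere $S_1\subset A$ by balls $B_\rho(x_i)$ with $B_{2\rho}(x_i)\subset A$, together with chaining of the local Harnack estimates across overlapping balls, produces a constant $C_1=C_1(p,d,C_0)$ such that
$$\sup_{S_1} u_r \;\leq\; C_1 \inf_{S_1} u_r.$$

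Third, undoing the scaling yields $M_u(r)=\sup_{S_1}u_r$ and $m_u(r)=\inf_{S_1}u_r$, hence $M_u(r)\leq C_1 m_u(r)$ for every $r$ such that the annulus $\{r/2\leq|x|\leq 2r\}$ is contained in the domain of $u$; this covers the full range $0<r<r_0$ after shrinking $r_0$ by a harmless factor. The exterior case $r>r_0$ with $\gz=\infty$ is completely analogous, since rescaling $u_r(x)=u(rx)$ again maps each shell $\{r/2\leq|y|\leq 2r\}$ onto the same reference annulus $A$.

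The one nontrivial point, and the main obstacle, is to ensure that the constant in the cited local Harnack inequality depends on $V$ only through $\|V\|_{L^\infty}$ on the reference annulus (and not on more delicate structure of $V$); once this is granted, both the dilation step and the chaining step are standard, and the number of chaining iterations is bounded by a constant depending only on the dimension $d$.
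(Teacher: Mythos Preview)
Your sketch is correct and is precisely the standard dilation argument used in the references the paper cites (\cite{veron,PT}); the paper itself does not supply a proof of this proposition but merely quotes it as a known uniform Harnack estimate near a Fuchsian singularity. The scale invariance $|x|^p|V_r(x)|=|rx|^p|V(rx)|\leq C_0$ is exactly the mechanism, and the dependence of the Harnack constant in \cite[Theorem~7.4.1]{Puc} only on $\|V\|_{L^\infty}$ (together with $p,d$ and the geometry of the domain) is standard, so there is no gap.
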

The above uniform Harnack inequality together with a dilatation argument and a standard elliptic estimates imply the following gradient estimates:
\begin{lemma}
\label{thm:grad}
Assume that $V$ has a Fuchsian type singularity at $\gz$. Fix $0<r_0<R$ (respectively, $r_0>R$). Then there exists a constant $C=C(r_0,R,V,d,p)>0$ such that any  positive solution $u$ of the equation $Q'(u)=0$ in $B_R \sm \{0\}$ (respectively, $B_R^*$) we have
\begin{equation}\label{grad}
|\grad u(x)| \leq C\frac{u(x)}{|x|} \quad \mbox{ for all} \quad 0<|x|<r_0 \; \;\mbox{(respectively, $|x|>r_0$)}.
\end{equation}
\end{lemma}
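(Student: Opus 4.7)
The plan is to reduce the statement to a standard interior $C^{1,\alpha}$ gradient estimate for a uniformly elliptic quasilinear equation by a scaling argument that uses both the Fuchsian bound \eqref{eq:FuchsCon} on $V$ and the uniform Harnack inequality of Proposition~\ref{prop:harnack}.

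First, I would fix an arbitrary point $x_0$ in the relevant punctured neighborhood of $\gz$ with $|x_0|$ smaller than $r_0$ (or larger than $r_0$, for $\zeta=\infty$) and set $r:=|x_0|$. The idea is to zoom in on a ball around $x_0$ of radius comparable to $r$, say $B_{r/4}(x_0)$, which stays inside the annulus where the Fuchsian bound and Harnack hold. Define the rescaled function
\[
v(y):=\frac{u(x_0+ry)}{u(x_0)}\qquad y\in B_{1/4}.
\]
A direct computation shows that $v$ is a positive (weak) solution in $B_{1/4}$ of the equation
\[
-\Delta_p v(y)+\tilde V(y)\,|v(y)|^{p-2}v(y)=0,\qquad \tilde V(y):=r^p V(x_0+ry).
\]

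Next, I would check that this rescaled problem is uniformly controlled, independently of $x_0$. The Fuchsian condition \eqref{eq:FuchsCon} gives $|\tilde V(y)|\le C\,(r/|x_0+ry|)^p\le C'$ for $y\in B_{1/4}$, since $|x_0+ry|\asymp r$ there. Moreover, Proposition~\ref{prop:harnack} applied on the annulus $\{x:|x|\in[3r/4,5r/4]\}$ (which contains $\overline{B_{r/4}(x_0)}$) yields $u(x)\asymp u(x_0)$ on $B_{r/4}(x_0)$ with an implicit constant depending only on $r_0,R,V,d,p$. Therefore $v$ is bounded above and below on $B_{1/4}$ by constants independent of $x_0$.

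Now I would invoke the standard interior $C^{1,\alpha}$ regularity for quasilinear equations of $p$-Laplacian type with bounded lower-order coefficient (Tolksdorf/DiBenedetto, cf.\ \cite[Theorem~7.4.1]{Puc}): applied to $v$ on $B_{1/4}$ with the uniform $L^\infty$ bounds just obtained, it gives a constant $\tilde C$ independent of $x_0$ such that $|\nabla v(0)|\le \tilde C$. Unwinding the scaling, $\nabla v(0)=r\,\nabla u(x_0)/u(x_0)$, so
\[
|\nabla u(x_0)|\le \tilde C\,\frac{u(x_0)}{|x_0|},
\]
which is the claimed inequality with $C:=\tilde C$. The main technical point is checking that the rescaled potential $\tilde V$ is uniformly bounded; once this is done, the cited interior regularity theorem does all the work. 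The case $\zeta=\infty$ is identical, working on annuli of the form $\{|x|\in[3r/4,5r/4]\}$ for large $r$.
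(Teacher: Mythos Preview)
Your argument is correct and is exactly the ``uniform Harnack inequality together with a dilatation argument and a standard elliptic estimate'' that the paper invokes in lieu of a detailed proof. You have simply written out the one-line justification in full, and the details (the rescaled equation, the Fuchsian bound giving $\|\tilde V\|_{L^\infty}\le C$, and the Harnack control of $v$) are all handled properly.
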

Lemma~\ref{thm:grad} implies the following removable singularity result.
\begin{corollary}
\label{thm:remov}
Suppose that $V$ has a Fuchsian type singularity at the origin, and that  $p<d$. Let $u$ be a positive bounded solution of the equation $Q'(u)=0$ in the punctured ball $B_R\sm\{0\}$.  Then $u$ has a removable singularity at the origin.
\end{corollary}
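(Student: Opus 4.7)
Since $u$ is bounded, set $M:=\sup_{B_R\sm\{0\}} u < \infty$. The strategy is to use the gradient estimate of Lemma~\ref{thm:grad} to promote $u$ from a weak solution on $B_R\sm\{0\}$ to a weak solution on all of $B_R$ via a standard cutoff argument around the origin. The hypothesis $p<d$ enters in two crucial places: it makes $|x|^{-p}$ locally integrable at the origin, so that both $|\grad u|^p$ and $Vu^{p-1}$ lie in $L^1_{\mathrm{loc}}(B_R)$; and it makes the annular contribution from the cutoff vanish in the limit.

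Concretely, Lemma~\ref{thm:grad} yields a radius $r_0<R$ and a constant $C>0$ with $|\grad u(x)|\leq CM/|x|$ on $B_{r_0}\sm\{0\}$. Because $p<d$, this gives $\grad u\in L^p_{\mathrm{loc}}(B_R)$, hence $u\in W^{1,p}_{\mathrm{loc}}(B_R)$. To verify the weak equation on all of $B_R$, I fix $\vgf\in\core$ (with $\Omega=B_R$) and a smooth nondecreasing cutoff $\eta:\mathbb{R}\to[0,1]$ with $\eta\equiv 0$ on $(-\infty,1]$ and $\eta\equiv 1$ on $[2,\infty)$, and insert $\eta_\vge\vgf$ with $\eta_\vge(x):=\eta(|x|/\vge)$ into the weak formulation \eqref{solution}, which is legitimate since $\eta_\vge\vgf$ vanishes near $0$. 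This yields
\[
\int_{B_R}|\grad u|^{p-2}\grad u\cdot\grad\vgf\,\eta_\vge\dx
+\int_{B_R}|\grad u|^{p-2}\grad u\cdot\grad\eta_\vge\,\vgf\dx
+\int_{B_R} V|u|^{p-2}u\,\eta_\vge\vgf\dx=0.
\]
Letting $\vge\to 0^+$, the first integral tends to $\int_{B_R}|\grad u|^{p-2}\grad u\cdot\grad\vgf\dx$ by dominated convergence with dominator $|\grad u|^{p-1}|\grad\vgf|\in L^1(B_R)$ (Hölder and $\grad u\in L^p_{\mathrm{loc}}$), and the third tends to $\int_{B_R} V|u|^{p-2}u\,\vgf\dx$ using the majorant $CM^{p-1}\|\vgf\|_\infty\,|x|^{-p}\mathbf{1}_{\supp\vgf}$, which is integrable precisely because $p<d$.

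The key step — and the place where the hypothesis is really used — is to show that the middle ``annular'' integral vanishes in the limit. With $|\grad\eta_\vge|\leq C/\vge$ supported in $B_{2\vge}\sm B_\vge$, the gradient estimate gives
\[
\left|\int_{B_R}|\grad u|^{p-2}\grad u\cdot\grad\eta_\vge\,\vgf\dx\right|
\leq \frac{C\|\vgf\|_\infty M^{p-1}}{\vge}\int_{B_{2\vge}\sm B_\vge}\frac{\dx}{|x|^{p-1}}
\lesssim M^{p-1}\,\vge^{\,d-p},
\]
which tends to $0$ exactly because $p<d$. Hence $u$ is a weak solution of $Q'(u)=0$ on all of $B_R$, i.e.\ the singularity at the origin is removable. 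The main obstacle is really just the bookkeeping in this last cutoff estimate; the substantive input is already packaged in Lemma~\ref{thm:grad}, and the argument breaks down in the borderline case $p=d$ (where $\vge^{d-p}=1$) — consistent with the fact, already visible from Example~\ref{ex:pl}, that bounded positive solutions may fail to extend once $\{0\}$ has positive $p\,$-capacity.
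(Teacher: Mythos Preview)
Your proof is correct and follows essentially the same route as the paper's: use Lemma~\ref{thm:grad} together with the boundedness of $u$ and the hypothesis $p<d$ to place $u$ in $W^{1,p}_{\mathrm{loc}}(B_R)$, and then conclude that $u$ is a weak solution of $Q'(u)=0$ in the full ball. The paper's proof is a terse two lines that leave the second step (the standard cutoff/zero-capacity argument) implicit; you have simply written it out in detail.
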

\begin{proof}
By Lemma~\ref{thm:grad} $u\in W^{1,\,p}(B_R)$, and consequently $u$ is a positive solution in the ball $B_R$.
\end{proof}

In \cite{PF} we proved the following ratio limit theorem.
\begin{proposition}[{\cite[Theorem 2.6]{PF}}]
\label{thm:reg}
Assume that $V$ has a weak Fuchsian type singularity at $\gz$.  Let $u$ and $v$ be two positive solutions of the equation $Q'(w) = 0$ in a punctured neighborhood of $\gz$. Then the limit
$$
\lim_{x \to \zeta} \frac{u(x)}{v(x)} \quad \mbox{exists;}
$$
the limit might be infinite.
\end{proposition}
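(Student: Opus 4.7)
The plan is a blow-up argument along any approach sequence $R_n \to \zeta$, combined with a rigidity statement for the resulting limit equation. Fix a base direction $\omega_0 \in S_1$ and normalize $\tilde u_n(\xi) := u(R_n \xi)/u(R_n \omega_0)$ and $\tilde v_n(\xi) := v(R_n \xi)/v(R_n \omega_0)$, so that both are positive weak solutions of the rescaled equation $-\Delta_p w + (R_n^p V(R_n \xi))|w|^{p-2} w = 0$ on $\Omega/R_n$, equal to $1$ at $\omega_0$. The uniform Harnack inequality (Proposition~\ref{prop:harnack}), the gradient estimate (Lemma~\ref{thm:grad}), and standard $C^{1,\alpha}$ interior regularity for quasilinear equations make $\{\tilde u_n\}, \{\tilde v_n\}$ precompact in $C^1_{\mathrm{loc}}(Y)$, where $Y:=\lim_n \Omega/R_n \subset \mathbb{R}^d \setminus \{0\}$. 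The weak Fuchsian hypothesis guarantees, up to a subsequence, that the rescaled potentials $R_n^p V(R_n \cdot)$ converge in the weak$^*$ topology of $L^\infty_{\mathrm{loc}}$ to some $V_\infty$, whence the subsequential limits $u_\infty, v_\infty$ are positive solutions of $-\Delta_p w + V_\infty |w|^{p-2} w = 0$ in $Y$, both normalized by $u_\infty(\omega_0) = v_\infty(\omega_0) = 1$.

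From the identity
\[
\frac{u(R_n \xi)}{v(R_n \xi)} = \frac{u(R_n \omega_0)}{v(R_n \omega_0)} \cdot \frac{\tilde u_n(\xi)}{\tilde v_n(\xi)},
\]
one sees that the existence of $\lim_{x\to \zeta} u(x)/v(x)$ in $[0,+\infty]$ reduces to two claims: first, that $u_\infty \equiv v_\infty$ on $Y$ for \emph{every} admissible blow-up; and second, granting the first claim, that the scalar sequence $k_n := u(R_n \omega_0)/v(R_n \omega_0)$ has a single limit in $[0,+\infty]$ independent of both $\{R_n\}$ and $\omega_0$. Independence of $\omega_0$ will follow from Harnack control on each sphere coupled with the local uniform convergence $\tilde u_n/\tilde v_n \to 1$, while independence of the approach sequence can be extracted by interpolating between two competing sequences $R_n, R_n'$ via a common intermediate scale, using a Harnack chain to bridge the scales.

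The main obstacle is the rigidity claim $u_\infty \equiv v_\infty$. Even in the simplest setting $V_\infty \equiv 0$, the positive $p$-harmonic functions on $\mathbb{R}^d \setminus \{0\}$ form the two-parameter cone $\{A v_{\alpha^*} + B : A, B \ge 0\}$ (Example~\ref{ex:pl}), so the pointwise normalization at $\omega_0$ alone cannot single out a unique limit; an extra constraint must be extracted from the blow-up geometry. I would expect this constraint to come from the fact that $u_\infty$ and $v_\infty$ arise from the \emph{same} rescaling of functions defined near $\zeta$, which should force them to share a common asymptotic type at both ends $|\xi|\to 0$ and $|\xi|\to \infty$ of $Y$, eliminating the second free parameter. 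Turning this heuristic into a proof is the delicate step: one candidate approach is a Phragm\'en--Lindel\"of comparison on annular shells combined with a second blow-up centered near a putative extremum of $u_\infty/v_\infty$, aiming at a contradiction via the strong comparison principle (Theorem~\ref{thm:wcp}). The residual case, in which $u$ and $v$ have mismatched asymptotic types at $\zeta$ so that $u_\infty$ and $v_\infty$ cannot coincide, is treated by swapping the roles of $u$ and $v$ and yields a ratio limit equal to $0$ or $+\infty$.
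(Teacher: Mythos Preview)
The paper does not prove this proposition; it is imported verbatim from the authors' earlier work \cite[Theorem~2.6]{PF}, so there is no in-paper argument to compare your attempt against.

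That said, your blow-up strategy is the correct framework and matches the rescaling/dilatation method that \cite{PF} employs (and that this paper alludes to, e.g., in the argument preceding Lemma~\ref{thm:grad}). However, your proposal is, by your own account, incomplete: you correctly isolate the rigidity claim $u_\infty \equiv v_\infty$ as the crux and then offer only a heuristic. This is exactly where the \emph{weak Fuchsian} hypothesis does its work, and you have not used it. As Definition~\ref{def:weak_Fuchs} and Remark~\ref{rem:weakF} indicate, the weak Fuchsian condition of \cite{PF} is engineered so that after (possibly iterated) rescaling one lands on a limit equation with $V_\infty = 0$, i.e., the pure $p$-Laplacian on $\mathbb{R}^d\setminus\{0\}$, whose positive solutions are completely classified (Example~\ref{ex:pl}). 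The rigidity in \cite{PF} then comes from this explicit structure combined with a comparison argument, not from an abstract Phragm\'en--Lindel\"of principle for a general $V_\infty$. Your sketch, as written, gives no mechanism to distinguish, say, the normalized limits $v_{\alpha^*}$ and $(v_{\alpha^*}+c)/(1+c)$, both of which equal $1$ at $\omega_0$ and solve the limit equation; the missing ingredient is precisely the reduction to $V_\infty=0$ together with an argument that the blow-up inherits a minimal-growth (or analogous boundary) constraint at one end of $\mathbb{R}^d\setminus\{0\}$, which kills the extra parameter.

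Your second step---showing that $k_n = u(R_n\omega_0)/v(R_n\omega_0)$ has a limit independent of the sequence $\{R_n\}$---is also only sketched. The Harnack-chain interpolation you propose works, but it requires the convergence $\tilde u_n/\tilde v_n \to 1$ to be \emph{locally uniform in the scale variable} (i.e., uniform over annuli $\{c_1 \le |\xi| \le c_2\}$ for all subsequences), which in turn needs the rigidity claim to hold for every convergent subsequence, not just one. This is fine once the first step is done properly, but should be stated.
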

We conclude this section with the following elementary lemma whose proof can be easily verified.
\begin{lemma}\label{lem:green1}
Let $u$ be a positive solution of the equation $Q'_V(u)=0$ in $\Omega$, and $f\in C^2(\mathbb{R}_+)$ be a positive function. Then
\begin{equation}\label{eq:Qf}
Q'_V(f(u))=-(p-1)|f'(u)|^{p-2}f''(u)|\nabla u|^p-|f'(u)|^{p-2}f'(u)V|u|^{p-1}+ Vf(u)^{p-1}.
\end{equation}
\end{lemma}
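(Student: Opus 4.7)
The plan is a direct computation using the chain rule and product rule for $\Delta_p$, followed by substitution of the equation satisfied by $u$. This is a routine identity, but the one subtlety worth flagging is the differentiation of $|f'(u)|^{p-2}f'(u)$ past the possible sign change of $f'$.

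First I would compute $\nabla(f(u)) = f'(u)\nabla u$, whence
\[
|\nabla f(u)|^{p-2}\nabla f(u) \;=\; |f'(u)|^{p-2}\,f'(u)\,|\nabla u|^{p-2}\nabla u.
\]
Introduce the scalar function $h(s):=|f'(s)|^{p-2}f'(s)$. A short case split (consider $f'>0$ and $f'<0$ separately, or equivalently write $h(s)=\mathrm{sgn}(f'(s))|f'(s)|^{p-1}$) gives
\[
h'(s) \;=\; (p-1)|f'(s)|^{p-2}f''(s),
\]
valid on the open set where $f'\neq 0$; on the complementary set one has $h\equiv 0$ and both sides of the identity vanish.

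Next I would apply the product rule inside the divergence:
\[
\Delta_p(f(u)) = \diver\!\bigl(h(u)\,|\nabla u|^{p-2}\nabla u\bigr) = h'(u)|\nabla u|^{p} + h(u)\,\Delta_p(u),
\]
and therefore
\[
Q'_V(f(u)) = -(p-1)|f'(u)|^{p-2}f''(u)|\nabla u|^{p} - |f'(u)|^{p-2}f'(u)\,\Delta_p(u) + V\,f(u)^{p-1},
\]
using that $f>0$ so $|f(u)|^{p-2}f(u)=f(u)^{p-1}$.

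Finally, since $u$ is a positive solution of $Q'_V(u)=0$, we have $\Delta_p(u)=V|u|^{p-2}u=V|u|^{p-1}$ (again using $u>0$). Substituting into the last displayed equation yields
\[
Q'_V(f(u)) = -(p-1)|f'(u)|^{p-2}f''(u)|\nabla u|^{p} - |f'(u)|^{p-2}f'(u)\,V|u|^{p-1} + V\,f(u)^{p-1},
\]
which is exactly the asserted identity. The only nontrivial step is the computation of $h'$, which is the main (and only) obstacle; everything else is bookkeeping.
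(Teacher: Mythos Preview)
Your computation is correct and is exactly the direct chain-rule verification the paper has in mind; the paper itself does not give a proof but simply states that the lemma ``can be easily verified.'' Your handling of $h(s)=|f'(s)|^{p-2}f'(s)$ and its derivative is the only nontrivial step, and you treat it properly.
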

\section{Existence of the desired solutions}\label{sec:pos}
In this section we prove the existence of positive
subsolutions, supersolutions, and  solutions of the equation $Q'(u)=0$ near $\gz$ with the desired asymptotic behavior.
We present explicit formulas for such sub and supersolutions in terms of the potentials $W_G$ and $U_G$ (see \eqref{Eq:Wolff} and \eqref{UpotentialY}). The existence of the appropriate solutions follows using Perron's method (cf. \cite{Garcia}, in particular the discussion around (3.1) therein).
\begin{lemma}\label{lem:green}
Suppose that $V$ satisfies Condition~(\textbf{C1}) (respectively,  (\textbf{C2})) near $\zeta$. Denote
\begin{equation}\label{upmvpm}
    u_\pm(x) :=\left\{
      \begin{array}{ll}
        1 \mp C W_G(x) & \hbox{ in the classical cases,} \\[2mm]
        1\pm C W_G(x) & \hbox{ in the nonclassical cases.}
      \end{array}
    \right.
\end{equation}
$$
  v_\pm(x) := v_{\alpha^*}(x)  \mp C U_G(x),
$$
where $W_G$, $U_G$ and $v_{\alpha^*}(x)$ are defined by \eqref{Eq:Wolff}, \eqref{UpotentialY} and \eqref{eq:free}, respectively, and $C$ is a large positive constant. Then $u_+$ (respectively, $v_+$) is  a positive supersolution and $u_-$ (respectively, $v_-$) is a positive subsolution of the equation $Q'(u)=0$ near the singular point $\zeta$.
Moreover,
$$ u_\pm(x) \;\underset{x\to \gz}{\sim}\; 1, \qquad \big(\mbox{respectively, } \; v_\pm(x) \;\underset{x\to \gz}{\sim}\; v_{\alpha^*}(x) \big).$$
\end{lemma}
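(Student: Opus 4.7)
\emph{Plan.} The plan is to verify by direct computation that each candidate satisfies the required differential inequality near $\gz$, using the identities for $W_G$ from Lemma~\ref{WolffLemma} and for $U_G$ from Lemma~\ref{Ulemma}. The unifying strategy is the same in both cases: compute $-\pl$ of the candidate explicitly; observe that the result has a definite sign and is of the same order (in $|x|$ and $g$) as the potential term $V|\cdot|^{p-2}(\cdot)$; then choose the prefactor $C$ so large that the principal term dominates with the correct sign. The asymptotic behavior and positivity near $\gz$ will follow at once from $W_G\to 0$ and $U_G/v_{\alpha^*}\to 0$.

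\emph{The $u_\pm$.} Assume (\textbf{C1}). Since $u_\pm$ differs from the constant $1$ by $\mp CW_G$ in the classical cases and by $\pm CW_G$ in the nonclassical cases, $(p-1)$-homogeneity of $\xi\mapsto|\xi|^{p-2}\xi$ together with Lemma~\ref{WolffLemma} gives $-\pl u_\pm=\pm C^{p-1}G(|x|)$ in both regimes, the sign on the right always matching the subscript on $u$. Because $W_G\to 0$ by Lemma~\ref{WolffLemma}, we have $u_\pm\to 1$, so $u_\pm>0$ near $\gz$ and $|u_\pm|^{p-2}u_\pm=u_\pm^{p-1}$ stays bounded. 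The pointwise bound $|V|\le G$ then gives $|Vu_\pm^{p-1}|\le 2G$ near $\gz$, whence
\[
Q'(u_+)\ge (C^{p-1}-2)G,\qquad Q'(u_-)\le (2-C^{p-1})G,
\]
and any $C$ with $C^{p-1}>2$ works. The relation $u_\pm\sim 1$ is then immediate.

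\emph{The $v_\pm$.} Assume (\textbf{C2}). The crucial observation is that, by inspection of \eqref{UpotentialY}, $U_G(x)$ depends on $x$ only through $v_{\alpha^*}(x)$; writing $\tau=v_{\alpha^*}(x)$ and $\tilde U(\tau):=U_G(x)$, we have $v_\pm=f_\pm(v_{\alpha^*})$ with $f_\pm(\tau)=\tau\mp C\tilde U(\tau)$. Since $v_{\alpha^*}$ is $p\,$-harmonic on $\mathbb{R}^d\sm\{0\}$, Lemma~\ref{lem:green1} applied with $V=0$ yields
\[
-\pl v_\pm=-(p-1)|f'_\pm|^{p-2}f''_\pm|\nabla v_{\alpha^*}|^p=\pm(p-1)C|f'_\pm|^{p-2}\tilde U''(\tau)|\nabla v_{\alpha^*}|^p.
\]
Substituting the explicit formula for $\tilde U''$ from Lemma~\ref{Ulemma} and the gradient of $v_{\alpha^*}$ read off from \eqref{eq:free}, a short calculation shows that in every case $\tilde U''(\tau)|\nabla v_{\alpha^*}|^p=c_{p,d}\,g(|x|)v_{\alpha^*}^{p-1}/|x|^p$ for a positive constant $c_{p,d}$; for $p\ne d$ this uses only the identity $(p-1)\alpha^*-p=-d$, and the $p=d$ case is direct. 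The bound $|Vv_\pm^{p-1}|\le Gv_\pm^{p-1}$ together with the forthcoming asymptotic $v_\pm\sim v_{\alpha^*}$ shows the potential term is of the same order $g\,v_{\alpha^*}^{p-1}/|x|^p$. Since Lemma~\ref{Ulemma} also furnishes $\pd_{v_{\alpha^*}}U_G\to 0$ and $U_G/v_{\alpha^*}\to 0$, we have $f'_\pm\to 1$ and $v_\pm/v_{\alpha^*}\to 1$ near $\gz$; picking $C$ large enough then forces $Q'(v_+)\ge 0$ and $Q'(v_-)\le 0$, while simultaneously confirming $v_\pm>0$ and $v_\pm\sim v_{\alpha^*}$.

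\emph{Main obstacle.} The delicate point throughout is the sign and case bookkeeping: $\alpha^*$ changes sign between $p<d$ and $p>d$; the function $v_{\alpha^*}$ tends to $\infty$ in the classical cases but to $0$ in the nonclassical cases; and the $p=d$ case requires the logarithmic form with its separate expression for $\tilde U''$. Arranging the signs so that $-\pl$ of each candidate has the sign demanded by the sub/supersolution inequality in every one of the six configurations is the only subtle point; once the signs align across all cases, the magnitude comparison with the bound $|V|\le G$ is routine and the choice of $C$ is quantitative.
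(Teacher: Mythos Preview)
Your proof is correct and follows essentially the same approach as the paper: compute $-\pl$ of each candidate via Lemmas~\ref{WolffLemma} and~\ref{lem:green1}/\ref{Ulemma}, observe that the result has the form $\pm(\text{large constant})\cdot G\cdot(\text{same growth as the potential term})$, and then choose $C$ to dominate $|V|\le G$. Your treatment of the $v_\pm$ case is in fact a bit more careful than the paper's, which writes the leading coefficient of $-\pl v_\pm$ as $\pm C^{p-1}$ when the chain-rule computation actually gives $\pm(p-1)C|f_\pm'|^{p-2}$; since $f_\pm'\to 1$ this discrepancy is harmless for the argument, but your version is the accurate one.
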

\begin{proof}
In light of  Lemma~\ref{WolffLemma} we have that   $u_\pm \sim 1$ near $\zeta$, and $-\pl u_\pm = \pm C^{p-1}G(|x|)$.
Therefore, near $\gz$ we have
\begin{equation}\label{eq:Qf1}
    Q'_V(u_\pm) = \pm C^{p-1}G(|x|) + u_\pm^{p-1} V(x)\; \substack{\geq \\ \leq}  \;\pm C^{p-1}G(|x|) \mp (1+\vge)|V(x)|\; \substack{\geq \\ \leq} \;0.
  \end{equation}

Similarly, Lemma~\ref{Ulemma} implies that $v_\pm \sim v_{\alpha^*}$ near $\zeta$. Using
Lemma~\ref{lem:green1} and Lemma~\ref{Ulemma},  we obtain
$$ -\pl v_\pm \sim \pm C^{p-1} \left(\pd^2_{v_{\alpha^*}} U_G\right)(x) |\grad v_{\alpha^*}(x)|^{p}
\sim \pm C^{p-1}\frac{g(|x|)}{|x|^p}\,v_{\alpha^*}^{p-1}.
$$
Therefore, near $\gz$ we have
\begin{multline}\label{eq:Qf2}
    Q'_V(v_\pm) \sim \pm C^{p-1}G(|x|) v_{\alpha^*}^{p-1}(x) + V(x) v_{\alpha^*}^{p-1}(x) \;\substack{\geq \\ \leq} \;\\[2mm]
\big(\pm C^{p-1}G(|x|) \mp (1+\vge)|V(x)|\big)v_{\alpha^*}^{p-1}(x)\; \substack{\geq \\ \leq} \;0.
  \end{multline}
\end{proof}
\begin{remark} \label{rem:p=d1} {\em
The existence of positive supersolutions of the equation $Q'(u)=0$ in a domain $D$ implies the existence of positive solutions of the equation $Q'(u)=0$ in $D$. Hence,  Lemma~\ref{lem:green} provides us with a proof of the claim of Lemma~\ref{lem:pos} under different assumptions, and in particular, covers the missing case $p=d$.
} \end{remark}
We conclude this section with a lemma that claims that as in the case of the $p\,$-Laplacian (see Example~\ref{ex:pl}), there exist two positive solutions $u_{\mathrm{small}}$ and $u_{\mathrm{large}}$  of the equation $Q'(u)=0$   near $\gz$, each of which behaves asymptotically either as the constant function or as the ``fundamental solution" $v_{\alpha^*}(x)$ (see \eqref{eq:free}) of the $p\,$-Laplacian.
\begin{lemma}\label{lem:0sol}
  Suppose that $V$ satisfies conditions~(\textbf{C1}) and (\textbf{C2}) near $\zeta$.
 Then there exist two positive solutions $u_{\mathrm{small}}^{(\gz)}$ and $u_{\mathrm{large}}^{(\gz)}$ of the equation $Q'(u)=0$ defined in a punctured neighborhood of $\gz$ and satisfying
  $$\lim_{x \to \zeta} \frac{u_{\mathrm{small}}^{(\gz)}(x)}{u_{\mathrm{large}}^{(\gz)}(x)}=0 .$$
   More precisely,  we have
   \begin{equation*}
 u_{\mathrm{small}}^{(\gz)}(x) \,\underset{x\to \gz}{\sim} \,\min\{
 1 ,   v_{\alpha^*}(x)\},
\end{equation*}
and
\begin{equation*}
 u_{\mathrm{large}}^{(\gz)}(x) \,\underset{x\to \gz}{\sim} \,\max\{
 1 ,   v_{\alpha^*}(x) \}.
\end{equation*}
\end{lemma}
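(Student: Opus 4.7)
\noindent The plan is to build $u_{\mathrm{small}}^{(\gz)}$ and $u_{\mathrm{large}}^{(\gz)}$ by Perron's method applied to the two sub/supersolution pairs of Lemma~\ref{lem:green}, and then to identify their asymptotic behavior via the ratio-limit theorem (Proposition~\ref{thm:reg}) together with a removable-singularity argument. Since $v_{\alpha^*}(x)\to\infty$ at $\gz$ in the classical cases whereas $v_{\alpha^*}(x)\to 0$ in the nonclassical cases, the ``constant-type'' pair $(u_-,u_+)$ with $u_\pm\sim 1$ from Lemma~\ref{lem:green} will furnish the small solution in the classical cases and the large solution in the nonclassical cases, while the ``fundamental-type'' pair $(v_-,v_+)$ with $v_\pm\sim v_{\alpha^*}$ plays the opposite role. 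First I would shrink a punctured neighborhood $\Omega_0$ of $\gz$ and fix $\epsilon\in(0,1)$ small enough that both $\epsilon u_-\leq u_+$ and $\epsilon v_-\leq v_+$ hold throughout $\Omega_0$; this is possible because $W_G$ and $U_G/v_{\alpha^*}$ both vanish at $\gz$ by Lemmas~\ref{WolffLemma} and~\ref{Ulemma}.

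Next I would construct the two solutions via a standard Perron argument. Exhaust $\Omega_0$ by smooth annular subdomains $\{A_n\}$, and on each $A_n$ solve the Dirichlet problem for $Q'(w)=0$ with boundary data given by the relevant supersolution. The method of sub- and supersolutions together with the weak comparison principle (Theorem~\ref{thm:wcp}) yields a solution $w_n$ with the sub/super sandwich preserved. The uniform Harnack inequality (Proposition~\ref{prop:harnack}) and the $C^{1,\alpha}$ gradient bound (Lemma~\ref{thm:grad}) give local compactness, so a diagonal subsequence converges locally in $C^1$ to a positive solution of $Q'(u)=0$ on $\Omega_0$ inheriting the sandwich. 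Applying this construction to the pair $(u_-,u_+)$ yields a solution $\hat u^{\mathrm{c}}$ with $\epsilon u_-\leq\hat u^{\mathrm{c}}\leq u_+$, and applying it to $(v_-,v_+)$ yields $\hat u^{\mathrm{s}}$ with $\epsilon v_-\leq\hat u^{\mathrm{s}}\leq v_+$; set $u_{\mathrm{small}}^{(\gz)}$ and $u_{\mathrm{large}}^{(\gz)}$ to $\hat u^{\mathrm{c}}$ and $\hat u^{\mathrm{s}}$ (or vice versa) according to the classical/nonclassical split.

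Finally, the ratio-limit theorem (Proposition~\ref{thm:reg}) guarantees that $\lim_{x\to\gz} u_{\mathrm{small}}^{(\gz)}/u_{\mathrm{large}}^{(\gz)}$ exists, and the sandwich forces it to be $0$: the ratio is dominated by a multiple of $u_+/(\epsilon v_-)\sim 1/v_{\alpha^*}$ in the classical case and by $v_+/(\epsilon u_-)\sim v_{\alpha^*}$ in the nonclassical case, both of which tend to $0$ at $\gz$. To upgrade the two-sided bracket to the strict asymptotic $\sim$: in the classical cases the corresponding small solution is bounded near $\gz$, so Corollary~\ref{thm:remov} (for $p<d$ and $\gz=0$; for $p=d$ via the standard Kato-class removable-singularity result; for $\gz=\infty$ via a Kelvin-type change of variable exploiting the conformality of the $d$-Laplacian when $p=d$ or the modified transformation used in Theorem~\ref{thm:asymp2} when $p>d$) extends it continuously to $\gz$ with a positive limit, giving $u_{\mathrm{small}}^{(\gz)}\sim 1 = \min\{1,v_{\alpha^*}\}$; the large solution then satisfies $u_{\mathrm{large}}^{(\gz)}\sim v_{\alpha^*}=\max\{1,v_{\alpha^*}\}$ by combining the ratio-limit theorem applied to $(\hat u^{\mathrm{c}},\hat u^{\mathrm{s}})$ with the sandwich bound on $\hat u^{\mathrm{s}}$. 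The nonclassical cases are handled by exchanging the roles and applying the same removable/continuity argument to the bounded solution $\hat u^{\mathrm{c}}\sim 1$ and then propagating $\sim v_{\alpha^*}$ to its partner. The main obstacle will be this final step: the Perron construction delivers only the $\asymp$-bracket, and strengthening it to $\sim$ requires a careful combination of the ratio-limit theorem with removable-singularity/continuity results whose direct formulations fit the classical cases cleanly but demand the above adaptations in the nonclassical ones.
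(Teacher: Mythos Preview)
Your overall plan—Perron's method on the sub/super pairs of Lemma~\ref{lem:green}, followed by the ratio-limit theorem—coincides with the paper's one-line proof, and you are right to flag the passage from the two-sided bracket $\asymp$ to the precise $\sim$ as the nontrivial step the paper leaves implicit.

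That said, your proposed upgrade has two genuine gaps. First, the sentence ``the large solution then satisfies $\hat u^{\mathrm s}\sim v_{\alpha^*}$ by combining the ratio-limit theorem applied to $(\hat u^{\mathrm c},\hat u^{\mathrm s})$ with the sandwich bound'' does not work: Proposition~\ref{thm:reg} only yields $\lim \hat u^{\mathrm s}/\hat u^{\mathrm c}\in[0,\infty]$; in the classical case $\hat u^{\mathrm c}\to\ell\in(0,\infty)$ while $\hat u^{\mathrm s}\asymp v_{\alpha^*}\to\infty$, so all you recover is $\hat u^{\mathrm s}\to\infty$, not $\hat u^{\mathrm s}/v_{\alpha^*}\to c$. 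You cannot apply Proposition~\ref{thm:reg} to the pair $(\hat u^{\mathrm s},v_{\alpha^*})$ either, since $v_{\alpha^*}$ is not a solution of $Q'(u)=0$. The same defect recurs when you ``propagate $\sim v_{\alpha^*}$ to its partner'' in the nonclassical case. Second, for the classical subcase $p>d$, $\zeta=\infty$ you appeal to ``the modified transformation used in Theorem~\ref{thm:asymp2}'', but that result is stated and proved only for $V\equiv 0$; it does not furnish a change of variables carrying $Q'_V(u)=0$ with general $V$ to an equation of the same type near the origin. A cleaner route avoids most of this: do not $\epsilon$-scale both pairs. A direct sign check on $W_G$ (Lemma~\ref{WolffLemma}) and on $U_G$ via~\eqref{UpotentialY} and the table of values of $b$ shows that in each of the four situations exactly one of the two pairs is already ordered near $\zeta$ with both members sharing the \emph{same} limit (namely $u_-<u_+$ with $u_\pm\to1$ in the nonclassical cases, and $v_-<v_+$ with $v_\pm/v_{\alpha^*}\to1$ in the classical ones). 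The plain squeeze then gives a solution $\sim\max\{1,v_{\alpha^*}\}$ directly, with no detour through removable-singularity or Kelvin-type arguments; only the companion small solution still requires separate justification.
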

\begin{proof}
By Lemma~\ref{lem:green} we have positive subsolutions and supersolutions which behave as the desired solutions. Therefore, using a standard sub-supersolution argument (based on the weak comparison principle, cf. \cite{Garcia}, in particular the discussion around (3.1) therein), it follows that there exist positive solutions with the desired asymptotic behavior.
 \end{proof}

\section{The asymptotic of `small' solutions near $\gz$}
\label{sec:small}
Throughout this section we assume that $p \neq d$.  The aim of this section is to prove that all  positive solutions of the equation
$Q'(u) = 0$ near $\gz$ that are bounded by the small positive solution of the $p$-Laplace equation near $\gz$, have the same asymptotic behavior.  Hence, if $V$ satisfies conditions~(\textbf{C1}) and (\textbf{C2}) near $\zeta$, and if a solution $u$ near $\gz$ satisfies  $u \leq Cu_{\mathrm{small}}^{(\gz)}$, then $u \sim u_{\mathrm{small}}^{(\gz)}$.
\begin{theorem}\label{lindqvist}
 Suppose that $V$  has a weak Fuchsian type singularity at $\zeta$.
 Let $v_1,\,v_2$ be two distinct positive solutions of the equation
$Q'(u) = 0$ in a punctured neighborhood $\Gw$ of $\gz$  satisfying
$$v_i(x) \leq C\min\{1, |x|^{\alpha^*}\} \qquad \mbox{near } \gz, $$
where  $C$ is a positive constant and $i=1,2$.
Then $$v_1 \;\underset{x\to \gz}{\sim}\;  v_2.$$
\end{theorem}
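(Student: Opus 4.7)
By Proposition~\ref{thm:reg}, the limit $L:=\lim_{x\to\gz}v_1(x)/v_2(x)$ exists in $[0,\infty]$, so the task is reduced to ruling out $L=0$ and $L=\infty$. The plan is a blow-up/rescaling argument driven by the weak Fuchsian structure of $V$ at $\gz$.

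Take a sequence $R_n\to\gz$ from the weak Fuchsian hypothesis, so that $V_n(x):=R_n^p V(R_n x)$ converges weakly-$*$ to $0$ in $L^\infty_{\mathrm{loc}}$ on a punctured neighborhood of $\gz$. Fix a reference point $x_0$ on the unit sphere and define the rescaled solutions
$$\hat v_i^{(n)}(x):=\frac{v_i(R_n x)}{v_i(R_n x_0)},\qquad i=1,2.$$
Each $\hat v_i^{(n)}$ is a normalized positive weak solution of $Q'_{V_n}(\hat v_i^{(n)})=0$. By the uniform Harnack inequality (Proposition~\ref{prop:harnack}), $C^{1,\alpha}$ elliptic estimates, and the stability of $p\,$-Laplace type operators under weak-$*$ convergence of the zero order coefficients (which is the core of Lindqvist's method), one extracts a subsequence converging in $C^{1,\alpha}_{\mathrm{loc}}$ to a positive $p$-harmonic function $\hat v_i^\infty$ on $\mathbb{R}^d\setminus\{0\}$ with $\hat v_i^\infty(x_0)=1$.

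Next, I would invoke the classification of positive $p$-harmonic functions on $\mathbb{R}^d\setminus\{0\}$ (those are precisely of the form $A v_{\alpha^*}+B$ with $A,B\geq 0$ not both zero) and translate the pointwise bound $v_i\leq C\min\{1,|y|^{\alpha^*}\}$ into an analogous bound on $\hat v_i^\infty$. According to which regime (classical or nonclassical) we are in, the asymptotic behavior of $v_{\alpha^*}$ at $\gz$ forces exactly one of the two summands in $\hat v_i^\infty=A v_{\alpha^*}+B$ to vanish; together with the normalization $\hat v_i^\infty(x_0)=1$ this determines $\hat v_i^\infty$ uniquely, so $\hat v_1^\infty\equiv\hat v_2^\infty$ along every convergent subsequence. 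Consequently
$$\frac{\hat v_1^{(n)}(x)}{\hat v_2^{(n)}(x)}=\frac{v_1(R_n x)/v_2(R_n x)}{v_1(R_n x_0)/v_2(R_n x_0)}\longrightarrow 1$$
locally uniformly on $\mathbb{R}^d\setminus\{0\}$, and combined with the existence of $L$ this forces $0<L<\infty$.

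The main obstacle is transferring the growth bound to $\hat v_i^\infty$ in the nonclassical case: the denominator $v_i(R_n x_0)$ need not a priori be of order $R_n^{\alpha^*}$, so one has to rule out degenerate subsequential limits by combining Harnack-type lower bounds with the non-existence of positive $p$-harmonic functions on $\mathbb{R}^d\setminus\{0\}$ that decay strictly faster than $v_{\alpha^*}$ at the end where $v_{\alpha^*}\to 0$. In the classical case a cleaner shortcut is available when $\gz=0$: the bound reduces to $v_i\leq C$, so by Corollary~\ref{thm:remov} each $v_i$ has a removable singularity at the origin and extends continuously to $B_R$, and the strong maximum principle yields $v_i(0)>0$, giving $L=v_1(0)/v_2(0)\in(0,\infty)$ at once.
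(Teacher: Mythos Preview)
Your approach is fundamentally different from the paper's and contains a genuine logical gap at the last step. You normalize each $v_i$ by its \emph{own} value $v_i(R_n x_0)$, argue that the two blow-up limits coincide, and then infer $\hat v_1^{(n)}/\hat v_2^{(n)}\to 1$. But this ratio equals
\[
\frac{\hat v_1^{(n)}(x)}{\hat v_2^{(n)}(x)}
=\frac{v_1(R_n x)/v_2(R_n x)}{v_1(R_n x_0)/v_2(R_n x_0)},
\]
and if $L=\lim_{x\to\gz} v_1/v_2$ exists then both numerator and denominator tend to $L$. When $L\in(0,\infty)$ the quotient is trivially $1$; when $L=0$ or $L=\infty$ it is an indeterminate $0/0$ or $\infty/\infty$ that can perfectly well equal $1$. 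Thus the statement $\hat v_1^{(n)}/\hat v_2^{(n)}\to 1$ carries no information whatsoever about whether $L$ is finite and positive: your two separate normalizations have erased exactly the quantity you are trying to control.

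There is a second gap, which you yourself flag as the ``main obstacle'': in the nonclassical cases the hypothesis $v_i\leq C|x|^{\alpha^*}$ does not pass to a useful bound on $\hat v_i^\infty$, because the normalizing factor $v_i(R_n x_0)$ may be $o(R_n^{\alpha^*})$, so the rescaled upper bound degenerates. Your proposed remedy (non-existence of $p$-harmonic functions decaying faster than $v_{\alpha^*}$) concerns only the limit object, not the lack of uniformity along the sequence, and so does not close the gap. Incidentally, what you describe as ``Lindqvist's method'' is not a compactness/stability statement; Lindqvist's paper \cite{Lindqvist} is about a Picone-type integral identity, which is precisely what the paper exploits.

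The paper proceeds in a completely different way and uses the growth hypothesis more incisively. After a comparison argument it reduces to showing that two ordered solutions $u_1\geq u_2$ with \emph{identical} boundary data on $\partial\Gw\setminus\{\gz\}$ must coincide. The bound $u_i\leq C\min\{1,|x|^{\alpha^*}\}$, together with the gradient estimate $|\nabla u_i|\leq C u_i/|x|$ (Lemma~\ref{thm:grad}), yields $u_i^p/|x|^p\in L^1(\Gw)$ and $\nabla u_i\in L^p(\Gw)$; this is exactly what is needed to make the Lindqvist test function
\[
\mu_\varepsilon=\frac{(u_1+\varepsilon)^p-(u_2+\varepsilon)^p}{(u_1+\varepsilon)^{p-1}}
\]
admissible. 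Inserting $\mu_\varepsilon$ (and its counterpart with indices swapped) into the weak formulation produces an identity whose right-hand side is a sum of nonnegative terms $L(\nabla\log(u_i+\varepsilon),\nabla\log(u_j+\varepsilon))$ and whose left-hand side tends to $0$ as $\varepsilon\to 0$ by dominated convergence; this forces $u_1=u_2$. The key point is that the smallness hypothesis is used for \emph{integrability} near $\gz$, not for a blow-up classification, and this sidesteps entirely the obstacle you identified.
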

\begin{proof}
The proof is based on Lindqvist's method \cite{Lindqvist}.
Without loss of generality we assume that $\pd \Omega\sm \{\gz\}=S_R$ for some $R>0$, and  $v_i\in C(\bar{\Gw}\sm \{\gz\}),\, v_i > 0,\, i=1,2$, on $S_R$. Then for some $C_1>0$ small $C_1 v_i \leq 1 \leq C_1^{-1} v_i$ on $S_R$. By Perron's sub-supersolution (cf. \cite{Garcia}, in particular the discussion around (3.1) therein)
method, there exist positive solutions $u_i$ of the Dirichlet problem
\begin{align*}
Q'(u_i) = 0 \qquad \mbox{in } \Gw,\\
u_i = 1 \qquad \mbox{on } S_R, \\
u_i \asymp v_i \qquad \mbox{near } \zeta.
\end{align*}
Due to the existence of the ratio limit near $\zeta$, we have $u_i \sim v_i$ near $\zeta$.
 Moreover, using an elementary comparison argument we get that either $u_1 \geq u_2$ or $u_2 \geq u_1$. So, without loss of generality,  we assume that $u_1 \geq u_2$.
We claim that $u_1 = u_2$ and this implies the lemma.

We use Lindqvist technique \cite{Lindqvist}. For completeness, we give a self-contained proof.
 Nevertheless, the reader is advised to consult the above paper
for details.

By our assumption $u_i(x) \leq C\min\{1, |x|^{\alpha^*}\}$, and therefore,
\begin{equation}
\label{lin1}
\int_{\Omega} u_i^p \frac{1}{|x|^p} \,\mathrm{d} x < \infty.
\end{equation}
On the other hand, Lemma~\ref{thm:grad} implies that  $|\grad u_i(x)| \leq C_2 u_i (x)|x|^{-1}$, and hence
$\grad u_i\in L^p(\Omega)$. Consequently,  for $0<\vge<1$, the $C^{1,\ga}$-function
$$
\mu_\vge := \frac{(u_1 + \vge)^p - (u_2 + \vge)^p}{(u_1 + \vge)^{p-1}}
$$
that vanishes on $S_R$ is a valid test function for the equation $Q'(u) = 0$.
In particular,
$$
\int_\Omega |\grad u_1|^{p-2} \grad u_1 \cdot \grad \mu_\vge \mathrm{d} x
 + \int_{\Omega} V u_1^{p-1} \mu_\vge  \mathrm{d} x = 0.
$$
Adding together this equation and the one with $u_1,\,u_2$ interchanged, we obtain
\begin{multline}
\label{lin2}
-\int_{\Omega}V(x)\left[ \frac{u_1^{p-1}}{(u_1 + \vge)^{p-1}} - \frac{u_2^{p-1}}{(u_2 + \vge)^{p-1}} \right]
\bigg((u_1 + \vge)^p - (u_2 + \vge)^p\bigg) \mathrm{d} x = \\[3mm]
 \!\!\int_\Omega \!\!(u_1 + \vge)^p L(\grad\log(u_1 + \vge),\grad\log(u_2 + \vge))\mathrm{d} x\!
+\!\! \int_\Omega \!\!(u_2 + \vge)^p L(\grad\log(u_2 + \vge),\grad\log(u_1+ \vge))\mathrm{d} x,
\end{multline}
where (see \cite{Lindqvist} or \cite{PTalt})
\begin{equation}\label{abineq}
L(a,b) := |a|^p - |b|^p - p |b|^{p-2} b \cdot (a - b) \geq 0 \qquad \forall a,b\in \mathbb{R}^d
\end{equation}
and equality occurs in \eqref{abineq} if and only if $a=b$. In particular, (recalling that $u_1=u_2=1$ on $S_R$) the right hand side of \eqref{lin2} is zero if and only if $u_1=u_2$

 On the other hand, the integrand of the left hand side of \eqref{lin2} tends to zero as $\vge\to 0$.
Moreover, due to Lagrange theorem (recall that $u_1 \geq u_2$) we have for $0<\vge < 1$
\begin{multline*}
|V(x)|\left| \left[ \frac{u_1^{p-1}}{(u_1 + \vge)^{p-1}} - \frac{u_2^{p-1}}{(u_2 + \vge)^{p-1}}\right]
\bigg((u_1 + \vge)^p - (u_2 + \vge)^p\bigg) \right|\leq\\[2mm]
|V(x)| \frac{u_1^{p-1}}{(u_1 + \vge)^{p-1}}\,p(u_1- u_2)(u_1 + \vge)^{p-1} \leq  p|V(x)|u_1^{p}\in L^1(\Gw).
\end{multline*}
Therefore, by letting $\vge\to 0$ in \eqref{lin2} and using Lebesgue dominated convergence theorem and Fatou Lemma,  we arrive at the desired
conclusion that
$ u_1 = u_2$.
\end{proof}

The uniqueness of the Dirichlet problem for small solutions is an immediate
corollary.

\begin{corollary}
Suppose that $V$ has a weak Fuchsian type singularity at $\zeta$, and let
$\Omega, \Gw'$ be punctured neighborhoods of $\zeta$ such that  $\partial \Gw\sm \{\gz\}$ is smooth, $\bar{\Gw}\sm \{\gz\}\subset \Gw'$, and $Q\geq 0$ on $C_0^\infty(\Gw')$. Then for any positive $\phi\in C(\partial \Gw\sm \{\gz\})$
the following Dirichlet problem
\begin{align*}
Q'(u)& = 0 \qquad \mbox{in } \Gw,\\
u &= \phi \qquad \mbox{on } \partial \Gw\sm \{\gz\}, \\
u(x) &\leq Cu_{\mathrm{small}}^{(\gz)}(x)  \qquad \mbox{near } \zeta.
\end{align*}
has at most one solution.
\end{corollary}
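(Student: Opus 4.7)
The plan is to carry out the Lindqvist-type argument from the proof of Theorem \ref{lindqvist} verbatim, with the constant boundary value $1$ replaced by the given continuous positive datum $\phi$. The key point is that the identity ``$u_1 = u_2 = \phi$ on $\partial\Omega\setminus\{\zeta\}$'' plays the same role here as ``$u_1 = u_2 = 1$ on $S_R$'' did in Theorem \ref{lindqvist}: in both cases it ensures that the Lindqvist test function
\[
\mu_\epsilon := \frac{(u_1+\epsilon)^p - (u_2+\epsilon)^p}{(u_1+\epsilon)^{p-1}}
\]
vanishes on $\partial\Omega \setminus \{\zeta\}$ for every $0 < \epsilon < 1$.

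Given two solutions $u_1, u_2$ of the Dirichlet problem, I would insert $\mu_\epsilon$ into the weak formulation of $Q'(u_1) = 0$, add the analogous identity obtained by swapping the roles of $u_1$ and $u_2$, and carry out the Picone-type algebra of \cite{Lindqvist} to arrive at identity \eqref{lin2}. The right-hand side of \eqref{lin2} is a sum of two nonnegative integrals involving the Picone quantity $L(a, b)$ from \eqref{abineq}. For the left-hand side I pass to the limit $\epsilon \to 0$ via Lebesgue dominated convergence: the integrand tends to zero pointwise, and---since here I do not have an a priori ordering of $u_1$ and $u_2$---I dominate it by $2p|V(x)|(\max\{u_1, u_2\} + 1)^p$. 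This lies in $L^1(\Omega)$ by the Fuchsian bound $|x|^p|V(x)| \leq C$ combined with the integrability $\int_\Omega u_i^p|x|^{-p}\,dx < \infty$, which follows from the smallness hypothesis $u_i \leq C u_{\mathrm{small}}^{(\zeta)}$ exactly as in \eqref{lin1}. Applying Fatou's lemma to the right-hand side then forces $L(\nabla\log u_1, \nabla\log u_2) = 0$ a.e.\ in $\Omega$, hence $\nabla\log u_1 = \nabla\log u_2$ a.e., so $u_1/u_2$ is constant on the connected domain $\Omega$. The boundary identity $u_1 = u_2 = \phi > 0$ then fixes this constant to be $1$, giving $u_1 \equiv u_2$.

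The main technical obstacle is the one present (but glossed over) in the proof of Theorem \ref{lindqvist}: justifying $\mu_\epsilon$ as an admissible test function, since it need not vanish at $\zeta$. I will handle this by multiplying by a cut-off $\eta_r$ supported away from a shrinking neighborhood of $\zeta$ and sending $r \to \zeta$; the limiting passage is controlled by the integrability above together with the gradient bound $|\nabla u_i(x)| \leq C u_i(x)/|x|$ from Lemma \ref{thm:grad}, which applies because a weakened Fuchsian singularity is in particular Fuchsian.
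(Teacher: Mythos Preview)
Your approach is the right one and is what the paper has in mind: the corollary is stated without proof, being called ``an immediate corollary'' of Theorem~\ref{lindqvist}, and the intended argument is precisely to rerun the Lindqvist computation from that proof with the boundary value $1$ replaced by $\phi$.

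There is, however, a slip in your dominating function. The bound $2p|V(x)|(\max\{u_1,u_2\}+1)^p$ is \emph{not} in $L^1(\Omega)$ in the nonclassical cases: for instance when $\zeta=0$ and $p>d$, the Fuchsian bound gives $|V(x)|\le C|x|^{-p}$, and the constant term coming from expanding $(\cdot+1)^p$ contributes $\int_{B_R}|x|^{-p}\,dx=\infty$. The fix is easy. Either drop the $+1$: at each point $x$ one of $u_1(x)\ge u_2(x)$ or $u_2(x)\ge u_1(x)$ holds, and the paper's Lagrange-theorem estimate applied pointwise then gives the dominating function $p|V(x)|\max\{u_1,u_2\}^p\le p|V(x)|(u_1^p+u_2^p)$, which \emph{is} in $L^1$ by \eqref{lin1} applied to both $u_i$. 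Or, more in line with the proof of Theorem~\ref{lindqvist} itself, first note that Proposition~\ref{thm:reg} plus the weak comparison principle (exactly the ``elementary comparison argument'' preceding the claim $u_1=u_2$ in that proof) yield a global ordering $u_1\ge u_2$ in $\Omega$, after which the paper's bound $p|V|u_1^p$ applies verbatim and there is no need for your symmetrized dominating function at all. Your treatment of the cutoff near $\zeta$ is fine and is implicit in the paper's proof as well.
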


\section{Three-spheres theorem}
\label{sec:tsi}
In this section we prove a three-spheres theorem for certain positive super- and subsolutions $u$ of the equation $Q'_V(v)=0$ near an isolated singular point $\gz$.
Recall that for a fixed function $u$ and $r>0$, we denote
$$m(r):=\inf_{x \in S_r} u(x), \qquad M(r):=\sup_{x \in S_r} u(x). $$ The classical Hadamard three-spheres theorem states
that for any positive subharmonic function $u$ in an open ball $B_R\subset \mathbb{R}^d$, $d>2$ (respectively, $d=2$),  we have that $M(r)$ is a convex function of $r^{2-d}$ (respectively, $\log r$) (see for example \cite{PW}).

Suppose that $V$ satisfies Condition~(\textbf{C1}) near $\zeta$, and let $\tilde{W}_{G}=W_G^\gz$ be the Wolff potential around $\gz$.
We show that if $u$ is a certain  positive sub/supersolution of the equation $Q'(v)=0$ near $\gz$, then  the function  $m_u(r)$ (respectively,  $M_u(r)$) is a concave (respectively, convex) function of $\tilde{W}_G$ {\em near} the isolated singular point $\gz$.
Let
\begin{equation}\label{eq:deff}
f(|x|):=C_1 + C_2 \tilde{W}_{G}(|x|),
\end{equation}
 where $C_1, C_2$ depend on $u$.
  The proof is based on Lemma~\ref{lem:green} that claims that for certain values of $C_1$, $C_2$, the function $f(|x|)$ is either a super- or subsolution near the singular point $\zeta$.

\begin{theorem}
\label{thm:tsi}
Assume that $V$ satisfies Condition~(\textbf{C1}), and let $W:=W_G^\gz$ be the corresponding Wolff potential around $\gz$.

 Suppose that $u$ is a positive subsolution of the equation $Q'(v)=0$ near a punctured neighborhood of $\gz$, satisfying one of the following conditions:
\vskip 4mm
\begin{center}
\begin{tabular}{|c|c|c|}
  \hline
  1.1& If $\gz=0$  and $p>d$, & then $\lim_{x\to 0} u(x)=0$.  \\[3mm]\hline
  1.2& If $\gz=0$ and $p\leq d$, & then $\lim_{x\to 0} u(x)=\infty$.  \\[3mm]\hline
 1.3& If $\gz=\infty$ and $p\geq d$, & then $\lim_{x\to\infty} u(x)=\infty$.  \\[3mm]\hline
 1.4& If  $\gz=\infty$ and  $p<d$, & then $\lim_{x\to \infty} u(x)=0$.  \\[3mm]
  \hline
\end{tabular}
\end{center}
Then for the case  $\gz=0$ (respectively, $\gz=\infty$) there is $0<R_2$ such that for every $r_2< R_2$  (respectively, $R_2<r_2$) there exists $0<R_1(r_2)<r_2$ (respectively, $0<r_2<R_1(r_2)$), such that for every $0<r_1 < R_1(r_2)< r_2 < R_2$ (respectively, $0<R_2<r_2 < R_1(r_2)< r_1$) the following convex three-spheres inequality
\begin{equation}
\label{eq:tsiM}
 M(r_3)\leq M(r_1) \frac{\tilde{W}(r_2) - \tilde{W}(r_3)}{\tilde{W}(r_2) - \tilde{W}(r_1)}
+M(r_2) \frac{\tilde{W}(r_3)-\tilde{W}(r_1)}{\tilde{W}(r_2) - \tilde{W}(r_1)}
 \end{equation}
holds true for all $r_3 \in (r_1, r_2)$ (respectively, $r_3 \in (r_2,r_1)$). Moreover, $M$ is monotone near $\gz$.

Similarly, suppose that $u$ is a positive supersolution of the equation $Q'(v)=0$ near a punctured neighborhood of $\gz$, satisfying one of the following conditions:
\vskip 4mm
\begin{center}
\begin{tabular}{|c|c|c|}
  \hline
  2.1& If $\gz=0$  and $p>d$, & then $\lim_{x\to 0} u(x)=\infty$.  \\[3mm]\hline
  2.2& If $\gz=0$ and $p\leq d$, & then $\lim_{x\to 0} u(x)=0$.  \\[3mm]\hline
 2.3& If $\gz=\infty$ and $p\geq d$, & then $\lim_{x\to\infty} u(x)=0$.  \\[3mm]\hline
 2.4& If  $\gz=\infty$ and  $p<d$, & then $\lim_{x\to \infty} u(x)=\infty$.  \\[3mm]
  \hline
\end{tabular}
\end{center}
Then for the case  $\gz=0$ (respectively, $\gz=\infty$) there is $0<R_2$ such that for every $r_2< R_2$  (respectively, $R_2<r_2$) there exists $0<R_1(r_2)<r_2$ (respectively, $0<r_2<R_1(r_2)$), such that for every $0<r_1 < R_1(r_2)< r_2 < R_2$ (respectively,  $0<R_2<r_2 < R_1(r_2)< r_1$) the following concave three-spheres inequality holds true
\begin{equation}
\label{eq:tsim}
  m(r_1) \frac{\tilde{W}(r_2) - \tilde{W}(r_3)}{\tilde{W}(r_2) - \tilde{W}(r_1)}
+m(r_2) \frac{\tilde{W}(r_3)-\tilde{W}(r_1)}{\tilde{W}(r_2) - \tilde{W}(r_1)} \leq m(r_3)
 \end{equation}
holds true for all $r_3 \in (r_1, r_2)$ (respectively, $r_3 \in (r_2,r_1)$).
Moreover, $m$ is monotone near $\gz$.
\end{theorem}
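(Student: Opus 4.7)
My plan is to imitate the classical Hadamard three-spheres argument, with the $p$-harmonic radial interpolant replaced by an affine function of the Wolff potential. Given the radii $r_1,r_2$ in the theorem, I set
\[
f(x) := C_1 + C_2\,\tilde W(|x|),
\]
and choose $(C_1, C_2)$ so that $f(r_j) = M(r_j)$, $j=1,2$; since $\tilde W$ is strictly monotone near $\gz$ by Lemma~\ref{WolffLemma}, this $2\times 2$ system has the unique solution
\[
C_2 = \frac{M(r_2) - M(r_1)}{\tilde W(r_2) - \tilde W(r_1)}, \qquad C_1 = M(r_1) - C_2\tilde W(r_1).
\]
The function $f$ is affine in $\tilde W$, hence monotone in $|x|$, so $f>0$ on the closed annulus $A$ between $S_{r_1}$ and $S_{r_2}$, and $\sup_A f = \max\{M(r_1), M(r_2)\}$. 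Evaluating $f$ at $r_3$ yields exactly the right-hand side of \eqref{eq:tsiM}, so the three-spheres inequality is equivalent to $u\le f$ on $A$.

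The decisive step is to verify that $f$ is a positive supersolution of $Q'(v)=0$ on $A$. Running the computation in the proof of Lemma~\ref{lem:green} for general coefficients, one has $-\pl f = |C_2|^{p-2}C_2(-\pl\tilde W)$, and Lemma~\ref{WolffLemma} gives $-\pl\tilde W = \mp G$ in the classical/nonclassical cases. Combining this with the bound $|V|\le G$ shows that $f$ is a positive supersolution as soon as (i)~$C_2 <0$ in the classical cases and $C_2>0$ in the nonclassical cases, and (ii)~$|C_2|\ge \sup_A f$; with the opposite signs $f$ becomes a subsolution, which is what is needed for the supersolution half of the theorem (with $m$ in place of $M$). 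The choice of thresholds $R_1(r_2)<r_2<R_2$ enters precisely to secure (i) and (ii). By the uniform Harnack inequality (Proposition~\ref{prop:harnack}), both $M(r)$ and $m(r)$ inherit the prescribed limit of $u$ at $\gz$; choosing $R_2$ so that $|\tilde W(R_2)|$ is below a fixed constant depending only on $p$ and $d$, and then $R_1(r_2)$ close enough to $\gz$, forces $M(r_1)$ (resp.\ $m(r_1)$) to have grown or decayed in the required direction. This pins down the sign of $C_2$ and simultaneously drives $|C_2|$ above $\sup_A f$. With $f$ a valid supersolution, the weak comparison principle (Theorem~\ref{thm:wcp}), applied on the bounded annulus $A$ where a positive solution exists by Lemma~\ref{lem:green}, gives $u\le f$ on $A$; evaluating on $S_{r_3}$ reads off \eqref{eq:tsiM}. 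The supersolution half yielding \eqref{eq:tsim} is symmetric.

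Monotonicity of $M$ near $\gz$ is then automatic: a convex function of $\tilde W$ whose one-sided limit at $\tilde W=0^+$ is either $0$ or $\infty$ must be monotone in a neighborhood of $0^+$, and the concave/supersolution case with $m$ is identical. I expect the bookkeeping across the eight sub-cases to be the main obstacle: in each one the monotonicity direction of $\tilde W$ (Lemma~\ref{WolffLemma}), the sign of $C_2$ dictated by whether the case is classical or nonclassical, and the direction of the prescribed limit of $u$ must align so that a single choice of thresholds $R_1(r_2)<r_2<R_2$ simultaneously delivers both the correct sign and the dominating magnitude of $C_2$. Once this alignment is in place, the supersolution check is essentially a rerun of the computation of Lemma~\ref{lem:green} with the constants $(C_1,C_2)$ tracked explicitly.
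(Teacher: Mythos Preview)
Your proposal is correct and follows essentially the same route as the paper: the paper also builds the radial interpolant $f=C_1+C_2\tilde W$ matching the endpoint values, checks via Lemma~\ref{WolffLemma} that $f$ is a sub/supersolution once $C_2$ has the right sign and $|C_2|^{p-1}\ge(\sup_A f)^{p-1}$, secures this by first choosing $R_2$ close enough to $\zeta$ that $\tilde W(R_2)$ is below a fixed threshold (in the paper, $\tilde W(R_2)\le 1/2$) and then $R_1(r_2)$ close enough to $\zeta$ that the endpoint values satisfy a doubling relation (e.g.\ $m(r_1)>2m(r_2)$), and finishes by weak comparison. One minor correction: your appeal to Proposition~\ref{prop:harnack} is misplaced, since the uniform Harnack inequality there is stated for positive \emph{solutions}, not sub- or supersolutions; but you do not need it, because the hypothesis already gives the full limit $\lim_{x\to\zeta}u(x)$, which directly forces $M(r)$ and $m(r)$ to share that limit.
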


\begin{proof}
We will only prove the case \textit{2.1} of the theorem, namely, the concave three-spheres inequality \eqref{eq:tsim} for $m(r)$ under the assumption $p>d$ and $u \to \infty$ as $x\to 0$ (see Remark~\ref{rem:conc}). The proofs for the other cases are similar.

\vskip 4mm

Since $\lim_{x\to 0}u(x)= \infty$, it follows that for every $0<r_2<R$ (to be determined later)
there exists $R_1:=R_1(r_2)<r_2$ such that
\begin{equation}
\label{eq:assumU}
m(r_1) > 2 m(r_2)\qquad \forall\, 0<r_1<R_1.
\end{equation}

  For $0<r_2<R$ and  $0<r_1<R_1(r_2)$, define an auxiliary function
$$f :  \{x\mid r_1 < |x| <r_2\}\to \mathbb{R}_+,$$
\begin{multline}\label{eq:defw}
f(x) := \tilde{f}(|x|) := m(r_1) \frac{\tilde{W}(r_2) - \tilde{W}(|x|)}{\tilde{W}(r_2) - \tilde{W}(r_1)}
+m(r_2) \frac{\tilde{W}(|x|)-\tilde{W}(r_1)}{\tilde{W}(r_2) - \tilde{W}(r_1)}
= \\[4mm]
 \frac{m(r_1)\tilde{W}(r_2) - m(r_2)\tilde{W}(r_1)}{\tilde{W}(r_2) - \tilde{W}(r_1)}
+ \frac{m(r_2)-m(r_1)}{\tilde{W}(r_2) - \tilde{W}(r_1)}\tilde{W}(|x|)=C_1+C_2\tilde{W}(|x|).
\end{multline}
Note that assumption \textit{2.1} and $\tilde{W}(r_2) > \tilde{W}(r_1)$ (Lemma \ref{WolffLemma}) imply
 that $C_2<0$.
Therefore, $f$ is a positive monotone decreasing function of $|x|$ satisfying $f(r_i) = m(r_i),\,i=1,2$.
In particular,
\begin{equation}
f(x) \leq m(r_1) \qquad \forall\, r_1 \leq |x| \leq r_2.
\end{equation}
Note that $f$ is of the form of $u_-$ of Lemma~\ref{lem:green}, therefore, for the case \textit{2.1} $f$ should be a positive subsolution near $\gz=0$. Indeed,
using Lemma~\ref{WolffLemma} we obtain
\begin{multline}\label{eq:Qf8}
    Q'_V(f)=-\pl f + V(x) (f(|x|))^{p-1}\leq
      -\left(\frac{m(r_1) - m(r_2)}{\tilde{W}(r_2) - \tilde{W}(r_1)} \right)^{p-1} G(|x|) + G(|x|)f(|x|)^{p-1} \leq \\[4mm]
      -\left(\frac{m(r_1)}{2\tilde{W}(R)}\right)^{p-1} G(|x|)
      +\left(m(r_1)\right)^{p-1} G(|x|).
\end{multline}
  It follows (recall that $\tilde{W}(r) \to 0$ as $r \to 0$) that there exists
  $0<R=:R_2$ such that for every $r_2< R_2$  there exists $0<R_1(r_2)<r_2$, such that for every $0<r_1 < R_1(r_2)< r_2 < R_2$  the function
$f$ is a subsolution in $\{x\mid r_1 < |x| < r_2\}$.

Consequently, for such $r_1$ and $r_2$, the weak comparison principle implies that
\begin{equation}\label{eq:uleqw}
m(r_1) \frac{\tilde{W}(r_2) - \tilde{W}(|x|)}{\tilde{W}(r_2) - \tilde{W}(r_1)}
+m(r_2) \frac{\tilde{W}(|x|)-\tilde{W}(r_1)}{\tilde{W}(r_2) - \tilde{W}(r_1)}
 =f(|x|) \leq u(x) \qquad \mbox{in } \{x\mid r_1 < |x| < r_2\}.
\end{equation}
Finally, for $r_3 \in(r_1, r_2)$ we take in \eqref{eq:uleqw} the infimum over the spheres $S_{r_3}$,  and we arrive at the desired three-spheres inequality \eqref{eq:tsim}.

It remains to prove the monotonicity of $m$ as $r\to 0^+$. Indeed, using \eqref{eq:tsim}, it follows that for  $r_2<R_2$ and $0<r_1<R_1(r_2)$ we have \begin{equation}
\label{eq:minineq}
\min(m(r_1), m(r_2))
\leq m(r_3) \qquad \forall \;r_3 \in(r_1, r_2).
\end{equation}
Since by our assumption $\lim_{r\to 0^+} m(r)=\infty$, \eqref{eq:minineq} clearly implies that $m$ is a decreasing
function of $r$ near $0$.
\end{proof}

\begin{remarks}\label{rem:conc}{\em
1. According to theorems~\ref{thm:asymp} and \ref{thm:main}, positive solutions of the equation $Q'(u)=0$ in a punctured neighborhood of $\gz$ never satisfy one of the conditions \textit{2.1--2.4} of Theorem~\ref{thm:tsi}. Nevertheless, the validity of the concave three-spheres inequality \eqref{eq:tsim} in these cases is  an essential part of the proof of Theorem~\ref{thm:main}. Therefore, we chose to prove one of these cases in detail.

2. Theorem~\ref{thm:tsi} can be strengthened by replacing the assumptions $\lim_{x\to\gz} u(x)=0,\infty$, by the weaker assumptions $\liminf_{x\to\gz} u(x)=0$ and $\limsup_{x\to\gz} u(x)=\infty$, respectively. We shall not elaborate this point in the present paper.
}
\end{remarks}

\section{Proof of Theorem~\ref{thm:main}}\label{sec:proofs}
We first prove the following lemma.
\begin{lemma}\label{lem:lim}
Suppose that $V$ satisfies Condition~(\textbf{C1}) near $\zeta$. Let $u$ be a positive solution of the equation $Q'(u)=0$ in a punctured neighborhood of $\gz$. Then
  \begin{equation}\label{eq:ex3}
\lim_{x\to\zeta} u(x) \quad \mbox{exists;}
\end{equation}
the limit might be infinite.
\end{lemma}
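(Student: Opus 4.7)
The plan is to analyze $L^-:=\liminf_{x\to\gz} u(x)$ and $L^+:=\limsup_{x\to\gz}u(x)$, both lying in $[0,\infty]$, and to show that $L^-=L^+$ in every case. Under (\textbf{C1}), Lemma~\ref{C1C@weakFuchs} gives that $V$ has a weakened Fuchsian singularity, which activates the uniform Harnack inequality (Proposition~\ref{prop:harnack}), the ratio-limit theorem (Proposition~\ref{thm:reg}), the three-spheres theorem (Theorem~\ref{thm:tsi}), and the sub- and supersolutions $u_\pm$ of Lemma~\ref{lem:green} satisfying $u_\pm(x)\to 1$ as $x\to \gz$.

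First I dispose of the extreme regime $L^+=\infty$ or $L^-=0$. Here I invoke Theorem~\ref{thm:tsi} in the strengthened version suggested by Remark~\ref{rem:conc}, which permits the hypothesis to be weakened to $\limsup u=\infty$ or $\liminf u=0$ respectively. Since $u$ is simultaneously a subsolution and a supersolution, for each of the four combinations of $(\gz,p)$ (classical/nonclassical at $\gz=0$ or $\gz=\infty$) exactly one subcase of the theorem applies, yielding the monotonicity of $M(r)$ or $m(r)$ as $r\to \gz$. Combined with $L^+=\infty$ (respectively $L^-=0$) and the Harnack bound $M(r)\leq C m(r)$, this forces both $m$ and $M$ to tend to the same extreme value, so $\lim_{x\to\gz}u(x)=\infty$ (respectively $0$).

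In the intermediate regime $0<L^-\leq L^+<\infty$, the solution $u$ is comparable to $1$ near $\gz$. In the nonclassical cases the formula $u_\pm=1\pm CW_G$ of Lemma~\ref{lem:green} gives the correct ordering $u_-\leq u_+$, so a standard Perron argument in a punctured neighborhood of $\gz$ produces a positive solution $w$ with $u_-\leq w\leq u_+$, hence $w(x)\to 1$ as $x\to \gz$. The ratio-limit theorem applied to $u$ and $w$ then gives $\lim_{x\to\gz}u(x)/w(x)=\ell\in(0,\infty)$ (the limit is finite and positive by the two-sided bound on $u$ together with $w\to 1$), whence $\lim_{x\to\gz}u(x)=\ell$.

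The main obstacle is the classical intermediate case: there $u_\pm=1\mp CW_G$ satisfies $u_+\leq u_-$, with the supersolution below the subsolution, so Perron's method is unavailable. I plan to resolve this by removable-singularity arguments using the gradient estimate $|\nabla u(x)|\leq Cu(x)/|x|$ of Lemma~\ref{thm:grad} combined with the boundedness of $u$. For $\gz=0$ with $p<d$ this is exactly Corollary~\ref{thm:remov}; for $\gz=\infty$ with $p>d$ a modified Kelvin transform reduces to the previous case; the remaining $p=d$ cases follow from the conformal invariance of the $d$-Laplacian together with the fact that isolated points have zero $d$-capacity. In every subcase the extended function is a positive solution in a full neighborhood of $\gz$, hence continuous there, so $\lim_{x\to\gz}u(x)$ exists.
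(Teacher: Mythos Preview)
Your route is considerably more elaborate than the paper's, and it has a genuine gap in the classical intermediate case. The paper's argument is essentially two lines: by Lemma~\ref{lem:green} the functions $u_\pm$ (which require only (\textbf{C1})) give, via the sub/supersolution construction of Lemma~\ref{lem:0sol}, a positive solution $v$ of $Q'(w)=0$ near $\gz$ with $\lim_{x\to\gz}v(x)=1$; then Proposition~\ref{thm:reg} yields that $\lim_{x\to\gz}u(x)/v(x)$ exists in $[0,\infty]$, and since $v\to 1$ this limit equals $\lim_{x\to\gz}u(x)$. No case split on $L^{\pm}$, no three-spheres theorem, and no removable-singularity discussion is needed for this lemma; those tools enter only later in the proof of Theorem~\ref{thm:main}. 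Your observation that in the classical cases the ordering is $u_+\leq u_-$ (supersolution below subsolution) is a fair remark about the \emph{proof} of Lemma~\ref{lem:0sol}, but that lemma is already established in the paper and you may invoke it directly here.

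The gap in your argument is the ``classical intermediate'' subcase. Corollary~\ref{thm:remov} covers only $\gz=0$ with $p<d$. For $\gz=\infty$ with $p>d$ you invoke a modified Kelvin transform, but in this paper (Theorem~\ref{thm:asymp2}) that device is carried out only for the pure $p$-Laplacian; with a nontrivial $V$ the transformed equation acquires a new potential, and you have not checked that it lands in the class where a removable-singularity theorem is available. Likewise, for $p=d$ the conformal invariance pertains to $-\Delta_d$, not to $Q'_V$: the zeroth-order term $V|u|^{d-2}u$ is not conformally invariant, so ``zero $d$-capacity of a point'' does not by itself yield removability for bounded solutions of $Q'(u)=0$. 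Thus precisely in the case where you flagged a difficulty with Perron's method, your proposed substitute is incomplete. The clean fix is simply to quote Lemma~\ref{lem:0sol} for the existence of $v\to 1$ and run the ratio-limit argument once, uniformly over all cases.
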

\begin{proof}
 By Lemma~\ref{lem:0sol},  the equation $Q'(w)=0$ admits a positive solution $v$ in a punctured neighborhood of $\gz$  satisfying
\begin{equation}\label{eq:lim1}
\lim_{x \to \gz} v(x)=1.
\end{equation}
Let $u$ be a positive solutions of the equation $Q'(w)=0$ in a such punctured neighborhood. By Proposition~\ref{thm:reg},
$$
\lim_{x \to \zeta} \frac{u(x)}{v(x)} \quad \mbox{exists}.
$$
In view of \eqref{eq:lim1}, it follows that
$\lim_{x \to \gz} u(x)$ exists.
\end{proof}

\begin{proof}[Proof of Theorem~\ref{thm:main}]
The first part of the theorem follows from Lemma~\ref{lem:lim}

Consider now the classical cases with $p \neq d$. It follows from Theorem \ref{lindqvist} and from the existence of a positive solution $v$ satisfying $v(x) \sim 1$ near $\gz$ (see Lemma~\ref{lem:0sol}) that $\lim_{x \to \gz}u(x)\neq 0$.

Let us turn to the nonclassical cases. Suppose that $\zeta=0$ and $p>d$ (respectively, $\zeta=\infty$ and $p< d$). By Lemma~\ref{lem:lim}  $\lim_{x \to \gz} u(x)$ exists. We need to prove that the limit in \eqref{eq:ex3} is finite.
Assume to the contrary that
$$\lim_{x \to \gz}u(x)=\infty .$$

So, $\lim_{r \to \gz}m(r)=\infty$, and we are
in the situation to use the three-spheres inequality (\ref{eq:tsim}). Hence, for appropriate fixed $r_3<r_2$ (respectively, $r_2 < r_3$) and any small enough $r_1$ satisfying $0<r_1 < r_3$ (respectively, large enough $r_1$ satisfying $r_3<r_1$) we have
\begin{eqnarray}\label{eq:tsir}
m(r_1) \frac{\tilde{W}(r_2) - \tilde{W}(r_3)}{\tilde{W}(r_2) - \tilde{W}(r_1)}
+m(r_2) \frac{\tilde{W}(r_3)-\tilde{W}(r_1)}{\tilde{W}(r_2) - \tilde{W}(r_1)} \leq m(r_3)
\end{eqnarray}
for all $r_3 \in (r_1, r_2)$ (respectively, $r_3 \in (r_2,r_1)$).

After some simple algebraic manipulations we arrive at
\begin{eqnarray*}
m(r_1) \leq \frac{\tilde{W}(r_2)}{\tilde{W}(r_2) - \tilde{W}(r_3)}\, m(r_3)  & \qquad& \forall\, 0<r_1<R_1 , \left(\right.\mbox{respectively, } \forall\, r_1>R_2 \left.\right).
\end{eqnarray*}
Thus, $m(r_1)$ is bounded near $\gz$, a contradiction. Therefore,
$\lim_{x\to\zeta} u(x)$ exists and is finite.

\vskip 4mm

Assume now that $V$ satisfies also Condition~(\textbf{C2}). By Lemma~\ref{lem:0sol}, there exist two positive solutions $u_{\mathrm{small}}^{(\gz)}$ and $u_{\mathrm{large}}^{(\gz)}$ of the equation $Q'(u)=0$ defined in a punctured neighborhood of $\gz$ satisfying
\begin{equation*}
 u_{\mathrm{small}}^{(\gz)}(x) \,\underset{x\to \gz}{\sim} \,\min\{
 1 ,   v_{\alpha^*}(x)\},
\end{equation*}
and
\begin{equation*}
 u_{\mathrm{large}}^{(\gz)}(x) \,\underset{x\to \gz}{\sim} \,\max\{
 1 ,   v_{\alpha^*}(x) \}.
\end{equation*}
 These two solutions clearly satisfy the requirements of the last claim of the theorem.
\end{proof}
\begin{corollary}\label{cor1}
Let $p > d$ and $\gz=0$ (respectively, $p<d$ and $\gz=\infty$). Assume that $V$ satisfies Condition~(\textbf{C1}) near $\zeta$. Then any solution $u$ of the equation $Q'(u)=0$ in $B_R\sm \{0\}$ (respectively, $B_R^*$) which is unbounded near $\gz$  changes its sign in any punctured neighborhood of  $\gz$.

Moreover, if in addition, $V=0$, then $$\lim_{r \to \gz} M_u(r)=-\lim_{r \to \gz} m_u(r)=\infty.$$
\end{corollary}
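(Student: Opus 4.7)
The plan is to reduce both assertions to Theorem~\ref{thm:main}, by exploiting three ingredients: the odd symmetry of the equation, $Q'(-u)=-Q'(u)$ (which holds since $\pl(-u)=-\pl(u)$ and $V|{-u}|^{p-2}(-u)=-V|u|^{p-2}u$), the strong maximum principle for $p$-Laplacian type equations, and, for the second assertion, the translation invariance of the $p$-Laplacian under $u\mapsto u+c$ (which is precisely where the hypothesis $V=0$ enters).

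For the sign-change claim, I argue by contradiction: if $u$ has constant sign in some punctured neighborhood $U$ of $\gz$, I replace $u$ by $-u$ (also a solution of $Q'(v)=0$) to arrange $u\geq 0$ on $U$. Since $u$ is unbounded we have $u\not\equiv 0$, so the strong maximum principle gives $u>0$ on the connected set $U$. We are in the nonclassical case, so Theorem~\ref{thm:main} yields that $\lim_{x\to\gz}u(x)$ exists and is finite, contradicting the unboundedness of $u$ near $\gz$.

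For the $V=0$ addendum, I first use the translation trick to strengthen ``unbounded'' into ``unbounded above and below''. If $u\leq M$ in a punctured neighborhood of $\gz$, then $M+1-u$ is a positive $p$-harmonic function there, so Theorem~\ref{thm:main} forces it to have a finite limit, hence so does $u$, contradicting unboundedness. Applying the same argument to $-u$ (also $p$-harmonic) shows $u$ is unbounded below as well.

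The final step is to upgrade ``unbounded above'' to $\lim_{r\to\gz}M_u(r)=+\infty$. Suppose, for contradiction, that $M_u(r_n)\leq K$ along some sequence $r_n\to\gz$. Order the $r_n$'s monotonically and consider each bounded spherical shell $A_n$ between $S_{r_n}$ and $S_{r_{n+1}}$; on $\partial A_n$ we have $u\leq K$, so the weak maximum principle for the $p$-Laplacian on the bounded domain $A_n$ gives $u\leq K$ on $\overline{A_n}$. The union $\bigcup_n\overline{A_n}$ is a punctured neighborhood of $\gz$, so $u\leq K$ there, contradicting the previous step. Applying the identical argument to $-u$ yields $\lim_{r\to\gz}m_u(r)=-\infty$. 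The proof is essentially bookkeeping; the only substantive point is that translation invariance (hence the hypothesis $V=0$) is what permits passing from Theorem~\ref{thm:main} (which concerns positive solutions) to arbitrary unbounded solutions in the second claim.
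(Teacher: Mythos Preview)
Your proof is correct and is precisely the argument the paper has in mind: the corollary is stated without proof immediately after Theorem~\ref{thm:main}, and your reduction via the odd symmetry $Q'(-u)=-Q'(u)$, the strong maximum (Harnack) principle, and---for the $V=0$ part---the translation invariance of the $p$-Laplacian together with the comparison/maximum principle on annuli, is exactly the intended route. There is nothing to add.
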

Similarly, we have
\begin{corollary}\label{cor7}
Let $p < d$ and $\gz=0$ (respectively, $p>d$ and $\gz=\infty$). Assume that $V$ satisfies Condition~(\textbf{C1}) near $\zeta$. Then any solution $u$ of the equation $Q'(u)=0$ in $B_R\sm \{0\}$ (respectively, $B_R^*$)
  satisfying
  $$\liminf_{z\to\gz} |u(x)| =0$$
  changes its sign in any punctured neighborhood of  $\gz$.
\end{corollary}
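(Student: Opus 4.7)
The plan is to argue by contradiction, reducing to the positive solution case already handled by Theorem~\ref{thm:main}. Suppose $u$ solves $Q'(u)=0$ in a punctured neighborhood of $\zeta$ with $\liminf_{x\to\zeta}|u(x)|=0$, and assume toward a contradiction that $u$ does not change sign in some punctured neighborhood $\Omega'$ of $\zeta$. Since $Q'(w)=-\pl w + V|w|^{p-2}w$ is odd in $w$ (both $-\pl$ and $|w|^{p-2}w$ flip sign under $w\mapsto -w$), the function $-u$ is also a solution of $Q'(w)=0$, so without loss of generality I may assume $u\geq 0$ on $\Omega'$.

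Next I would invoke the strong maximum principle / Harnack inequality for nonnegative solutions of $p$-Laplacian type equations (a standard consequence of the Harnack estimate used in Proposition~\ref{prop:harnack}, cf.\ \cite[Theorem 7.4.1]{Puc}) to conclude that, excluding the trivial case $u\equiv 0$, we have $u>0$ throughout $\Omega'$. In this nontrivial case, $u$ is a bona fide positive solution of $Q'(u)=0$ in a punctured neighborhood of $\zeta$, and since we are in the classical case ($p<d$ with $\zeta=0$, or $p>d$ with $\zeta=\infty$) with $V$ satisfying Condition~(\textbf{C1}), Theorem~\ref{thm:main} yields
\begin{equation*}
\lim_{x\to\zeta} u(x) = \ell,\qquad 0<\ell\leq\infty.
\end{equation*}
In particular $\liminf_{x\to\zeta}|u(x)|=\ell>0$, contradicting our hypothesis $\liminf_{x\to\zeta}|u(x)|=0$.

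The key ingredient is the strictly positive lower bound $\ell>0$ in the classical branch of Theorem~\ref{thm:main}; once that is in hand the argument is structurally identical to the proof of Corollary~\ref{cor1}, with the roles of the assumptions ``unbounded near $\zeta$'' and ``$\liminf|u|=0$'' interchanged according to the classical/nonclassical dichotomy. The only mildly delicate point is the reduction from ``does not change sign'' to ``nonnegative,'' which uses the odd symmetry of $Q'$ together with the strong maximum principle to rule out vanishing of $u$ at any interior point of $\Omega'$; the trivial solution $u\equiv 0$ is excluded in the natural reading of the conclusion, since a function identically zero cannot be said to change sign.
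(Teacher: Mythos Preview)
Your argument is correct and is exactly the intended one: the paper states Corollary~\ref{cor7} without proof, introducing it with ``Similarly, we have'' right after Corollary~\ref{cor1}, so the reader is meant to supply precisely the contrapositive argument you wrote---reduce to a positive solution via the oddness of $Q'$ and the strong maximum principle, then invoke the classical-case conclusion $0<\ell\leq\infty$ of Theorem~\ref{thm:main} to contradict $\liminf_{x\to\zeta}|u(x)|=0$. Your handling of the trivial solution $u\equiv 0$ is also the standard convention the authors implicitly adopt.
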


\section{The asymptotic of positive solutions}\label{sec:asymp}
In this section we discuss the asymptotic behavior of positive solutions near a singular point $\gz$.
First, we find the asymptotic behavior near an interior singularity  for the classical case $p\leq d$ and $\gz=0$.
\begin{proof}[Proof of Theorem~\ref{thm:asymp}]
 It follows from Remarks~\ref{assumptions} and  Lemma~\ref{lem:0sol} that there exists a solution $w$, such that $w \sim v_{\alpha^*}$ near the origin.
On the other hand,  by Serrin's removable singularity result (Theorem~\ref{thm:asympSerrin}), the solution $u$ has either
a removable singularity or $u \asymp v_{\alpha^*}$. In the latter case, the ratio limit theorem
(Proposition~\ref{thm:reg}) implies that $u \sim w \sim v_{\alpha^*}$ near the origin.
\end{proof}

 We turn now to the proof of the asymptotic behavior  of positive solutions in the nonclassical cases under the integrability assumption.
\begin{proof}[Proof of Theorem~\ref{thm:asymp3}]
Let $u$ be a positive solution
of $Q'(u) = 0$ near $\zeta$. By Theorem~\ref{thm:main} we have
$\lim_{x \to \zeta} u(x) < \infty$. So we may assume that  $\lim_{x \to \zeta} u(x) = 0$ and
consequently, Theorem~\ref{thm:tsi} applies.

In particular, by fixing $r_2$ and letting  $r_1 \to 0$ (respectively, $r_1 \to \infty$) in \eqref{eq:tsiM} we obtain
\begin{equation}\label{eq:wr3leqmr3}
 M(r_3)\leq \frac{M(r_2)}{\tilde{W}(r_2)} \,\tilde{W}(r_3)
\quad 0 < r_3 < r_2\, \quad (\mbox{respectively, } r_2 <r_3 < \infty).
\end{equation}
Recall that for $V$ integrable near $\gz$ we have  $\tilde{W}\sim v_{\alpha^*}$.    Consequently, \eqref{eq:wr3leqmr3} implies $  u \leq C v_{\alpha^*}   $ near $\zeta$.

On the other hand, by Lemma~\ref{lem:0sol} there exists a positive solution $w$ near $\gz$ such that $w\sim v_{\alpha^*}$. Therefore, Lemma~\ref{lindqvist} implies that $u \sim w \sim v_{\alpha^*}$  near $\gz$.
\end{proof}

\section{Applications and conjectures}
\label{sec:app}
In this section we present some applications of Theorem~\ref{thm:main}.
First we recall the notion of positive solutions of minimal growth \cite{Agmon,PT}.
  \begin{definition}\label{def:minimal_gr} {\em
1. Let $K\Subset \Omega$, and let $u$ be a positive solution of the equation $Q'(w)=0$ in  $\Omega\sm K$. We say that $u$ is {\em a positive solution of  minimal growth in a neighborhood of infinity in $\Omega$}  if for any $K\Subset K'\Subset \Omega$ with smooth boundary  and any positive supersolution $v\in C((\Omega\sm K')\cup \,\pd K')$ of the equation $Q'(w)=0$ in $\Omega \sm K'$ satisfying  $u \leq v$ on $\pd K'$, we have $u \leq v$ in $\Omega \sm K'$.

\vskip 3mm

2. Let $u$ be a positive solution of the equation $Q'(w) = 0$ in a punctured neighborhood of $\gz$. We say that $u$ is {\em a positive solution
of minimal growth at $\zeta$} if for any smaller punctured neighborhood $K$ of
$\zeta$ such that $\pd K\sm \{\zeta\}$ is smooth, and any positive supersolution $v \in C(\bar{K} \sm \{\zeta\})$ of the equation $Q'(w) = 0$ in $K \sm \{\zeta\}$ satisfying
$u \leq v$ on $\pd K\sm \{\zeta\}$, we have $u \leq v$ in $K \sm \{\zeta\}$.
}
  \end{definition}

Assume that $V\in  L^\infty_{\mathrm{loc}}(\Omega)$, and that the equation $Q'(u)=0$ admits a  positive solution in $\Omega$. Then for any $\gz\in \Omega$ the equation $Q'(u)=0$ admits a positive solution $u_{\gz}$ of the equation $Q'(u)=0$ in $\Omega\sm \{\gz\}$ of minimal growth in a neighborhood of infinity in $\Omega$ \cite{PT}. This solution is known to be unique (up to a multiplicative constant);
see \cite{PT} for the case $1<p \leq d$, and \cite{PF} for $p >d$. The functional $Q$ is said to be {\em critical} in $\Omega$ if $u_\gz$ is in fact a positive solution of the equation $Q'(u)=0$ in $\Omega$, and {\em subcritical} otherwise.

We note that for $\Omega = \mathbb{R}^d$, the solutions of minimal growth in the neighborhood of infinity of $\Omega$ are solutions with minimal growth at $\zeta = \infty$.

For solutions of minimal growth at $\gz$ we have the following asymptotic behavior.
\begin{theorem}\label{thm_nee}
Assume that $V$ satisfies conditions~(\textbf{C1}) and (\textbf{C2}) near $\gz$ and $p \neq d$. Let $u$ be solution
of the equation $Q'(w) = 0$ of minimal growth at $\zeta$. Then
$$
u(x) \;\underset{x\to \gz}{\sim}\; \min\{1,|x|^{\alpha^*}\}.
$$
\end{theorem}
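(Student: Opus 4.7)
The plan is to show asymptotic equivalence of $u$ with the ``small'' solution $u_{\mathrm{small}}^{(\gz)}$ constructed in Lemma~\ref{lem:0sol}, which already satisfies $u_{\mathrm{small}}^{(\gz)}(x)\underset{x\to\gz}{\sim}\min\{1,|x|^{\alpha^*}\}$. The core idea is to test the minimality of $u$ against a sufficiently dilated copy $c\cdot u_{\mathrm{small}}^{(\gz)}$, and then to combine this with Theorem~\ref{thm:main}, the ratio-limit theorem (Proposition~\ref{thm:reg}) in the classical cases and the Lindqvist-type uniqueness (Theorem~\ref{lindqvist}) in the nonclassical cases.

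First I would fix a punctured neighborhood $K$ of $\gz$, with smooth $\pd K\sm\{\gz\}$, small enough that both $u$ and $u_{\mathrm{small}}^{(\gz)}$ are defined, positive and continuous on $\bar K\sm\{\gz\}$. Because $\pd K\sm\{\gz\}$ is compact, there exists $c>0$ with $u\leq c\cdot u_{\mathrm{small}}^{(\gz)}$ on $\pd K\sm\{\gz\}$. By the $p$-homogeneity of $Q'$, the dilated function $c\cdot u_{\mathrm{small}}^{(\gz)}$ is again a positive solution of $Q'(w)=0$ in $K\sm\{\gz\}$, hence a positive supersolution belonging to $C(\bar K\sm\{\gz\})$. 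The definition of minimal growth at $\gz$ therefore yields
\begin{equation}\label{mincomp_plan}
u(x)\;\leq\; c\,u_{\mathrm{small}}^{(\gz)}(x)\qquad\forall\, x\in K\sm\{\gz\}.
\end{equation}
By Theorem~\ref{thm:main}, the limit $\ell:=\lim_{x\to\gz}u(x)$ exists in $[0,\infty]$.

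In the classical cases ($\gz=0,\,p<d$ or $\gz=\infty,\,p>d$), Lemma~\ref{lem:0sol} gives $u_{\mathrm{small}}^{(\gz)}\sim 1$, so \eqref{mincomp_plan} forces $\ell<\infty$, and Theorem~\ref{thm:main} guarantees $\ell>0$ in these cases. Hence both $u$ and $u_{\mathrm{small}}^{(\gz)}$ admit positive finite limits at $\gz$, and the ratio-limit theorem (Proposition~\ref{thm:reg}) implies $\lim_{x\to\gz}u(x)/u_{\mathrm{small}}^{(\gz)}(x)\in(0,\infty)$; thus $u(x)\underset{x\to\gz}{\sim} u_{\mathrm{small}}^{(\gz)}(x)\underset{x\to\gz}{\sim}\min\{1,|x|^{\alpha^*}\}$.

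In the nonclassical cases ($\gz=0,\,p>d$ or $\gz=\infty,\,p<d$), $u_{\mathrm{small}}^{(\gz)}\sim v_{\alpha^*}\to 0$, so \eqref{mincomp_plan} forces $\ell=0$ and moreover $u(x)\leq C|x|^{\alpha^*}$ near $\gz$; the same bound holds for $u_{\mathrm{small}}^{(\gz)}$. By Lemma~\ref{C1C@weakFuchs} and Remark~\ref{rem:weakF}, Condition~(\textbf{C1}) ensures $V$ has a weak Fuchsian singularity at $\gz$, so the pair $u,\,u_{\mathrm{small}}^{(\gz)}$ satisfies the hypotheses of Theorem~\ref{lindqvist}, which yields $u\sim u_{\mathrm{small}}^{(\gz)}\sim v_{\alpha^*}=\min\{1,|x|^{\alpha^*}\}$. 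The only delicate point of the argument is the legitimate use of minimality: one must choose $K$ small enough that $u_{\mathrm{small}}^{(\gz)}$ is defined and continuous up to $\pd K\sm\{\gz\}$, so that $\min_{\pd K\sm\{\gz\}}u_{\mathrm{small}}^{(\gz)}>0$ and the comparison constant $c$ in \eqref{mincomp_plan} is finite; everything else is a direct invocation of the results already established in the paper.
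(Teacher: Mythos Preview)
Your proof is correct and follows essentially the same route as the paper's: minimality against $c\,u_{\mathrm{small}}^{(\gz)}$ to cap $u$, then Theorem~\ref{thm:main} in the classical cases and Theorem~\ref{lindqvist} in the nonclassical cases. The only superfluous step is invoking Proposition~\ref{thm:reg} in the classical case: once you know that both $u$ and $u_{\mathrm{small}}^{(\gz)}$ have finite positive limits at $\gz$, the ratio converges to a positive constant by elementary calculus, so the ratio-limit theorem is not needed there.
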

\begin{proof}
In the classical cases the theorem is just a reformulation of Theorem~\ref{thm:main}.
In the nonclassical cases the existence of a solution with the required asymptotic behavior near $\gz$ is guaranteed
by Lemma~\ref{lem:0sol}, and therefore the asymptotic behavior of $u$  near $\gz$ follows from Lemma~\ref{lindqvist}.
\end{proof}

In a recent paper \cite{JV}, Jaye and Verbitsky study the  behavior of the unique positive solution $u$ of  the equation $Q'(u)=0$ in $\mathbb{R}^d$ of minimal growth in a neighborhood of infinity in $\mathbb{R}^d$ with an isolated singularity at the origin, where $p<d$. It is proved that   if $V$ is a {\em nonpositive} potential which belongs to a certain class of measures, then $u(x)\asymp  |x|^{\ga^*}$
in $\mathbb{R}^d$. Our results here allows for following generalization.

\begin{corollary}\label{JV}
Let $\Omega = \mathbb{R}^d$ and $p < d$. Assume that $V$ belongs to $L^{\ga}(B_R)$ with $\ga>d/p$, and $V$ satisfies
conditions (\textbf{C1}) and (\textbf{C2}) at infinity. Suppose that $Q$ is subcritical in  $\mathbb{R}^d$.
Then the unique positive solution $u$ of the equation $Q'(u) = 0$ in
$\Omega \sm \{0\}$ of minimal growth in a neighborhood of infinity in $\Omega$
satisfies $u \asymp |x|^{\alpha^*}$.
\end{corollary}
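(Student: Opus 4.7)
The plan is to establish $u\asymp |x|^{\alpha^*}$ separately near the two special points $\zeta=0$ and $\zeta=\infty$, and then to extend the equivalence to all of $\mathbb{R}^d\setminus\{0\}$ by a routine Harnack and compactness argument on intermediate annuli.

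For the behavior near $\zeta=\infty$, I would first observe that $p<d$ makes this a nonclassical singular point and that by hypothesis $V$ satisfies $(\mathbf{C1})$ and $(\mathbf{C2})$ there. The hypothesis that $u$ has minimal growth in a neighborhood of infinity in $\mathbb{R}^d$ implies, a fortiori, that $u$ is a positive solution of minimal growth at $\zeta=\infty$ in the sense of Definition~\ref{def:minimal_gr}. I would then apply Theorem~\ref{thm_nee} to conclude
$$u(x)\;\underset{x\to\infty}{\sim}\;\min\{1,|x|^{\alpha^*}\}=|x|^{\alpha^*},$$
the last equality using $\alpha^*=(p-d)/(p-1)<0$, so that $|x|^{\alpha^*}<1$ for $|x|$ large.

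For the behavior near $\zeta=0$, this is the classical case. The assumption $V\in L^{\alpha}(B_R)$ with $\alpha>d/p$ is exactly the hypothesis required by Theorem~\ref{thm:asymp} (recall Remarks~\ref{assumptions}.2). Consequently either $u$ has a removable singularity at the origin, or $u(x)\sim|x|^{\alpha^*}$ as $x\to 0$. To rule out the removable case I would invoke subcriticality: a removable singularity would allow $u$ to be extended to a positive solution of $Q'(w)=0$ on all of $\mathbb{R}^d$, and the minimal growth of $u$ in a neighborhood of infinity is preserved under this extension since it concerns only the behavior for $|x|$ large. The extended $u$ would therefore be a ground state for $Q$ on $\mathbb{R}^d$, forcing $Q$ to be critical in $\mathbb{R}^d$ by the very definition of criticality, contrary to our hypothesis. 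Hence the removable alternative is impossible and $u(x)\sim|x|^{\alpha^*}$ as $x\to 0$.

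The two local equivalences give $u\asymp|x|^{\alpha^*}$ on some punctured neighborhood of $0$ and on some neighborhood of $\infty$. On any intermediate compact annulus the continuous positive function $u$ is bounded above and bounded away from zero (by Proposition~\ref{prop:harnack}), and the same is trivially true of $|x|^{\alpha^*}$, so the two are comparable there; patching produces $u\asymp|x|^{\alpha^*}$ on all of $\mathbb{R}^d\setminus\{0\}$. The main obstacle I anticipate is the removability-excludes-criticality step: one must verify carefully that the weak extension of $u$ across the origin remains a weak solution of $Q'(w)=0$ on the entire ball (this uses the integrability of $V$ near $0$ via Serrin-type regularity), and that the minimal-growth property at infinity is genuinely inherited by the extension so as to supply a bona fide ground state contradicting subcriticality.
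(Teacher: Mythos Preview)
Your proposal is correct and follows essentially the same route as the paper: Theorem~\ref{thm_nee} near infinity, Serrin's dichotomy near the origin with subcriticality excluding the removable case, and a compactness argument on intermediate annuli. The only minor differences are that the paper invokes Theorem~\ref{thm:asympSerrin} (which already yields $u\asymp|x|^{\alpha^*}$) rather than the sharper Theorem~\ref{thm:asymp}, and that the paper leaves the subcriticality step implicit whereas you spell it out; your more careful treatment of that step is an improvement in exposition, not a departure in strategy.
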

\begin{proof}
According to Theorem~\ref{thm:asympSerrin} $u \asymp |x|^{\alpha^*}$ near zero, and
according to Theorem~\ref{thm_nee} $u \sim |x|^{\alpha^*}$ near infinity.
Hence the claim follows by a compactness argument.
\end{proof}

The next result answers a question posed
in \cite[Section 5]{PT}, by showing that for a general domain $\Omega$, and  $p>d$, positive solutions
of the equation $Q'(u)=0$ in $\Omega\sm \{\gz\}$ of minimal growth in a neighborhood of infinity in $\Omega$
are comparable to $1$ near the isolated singular point $\gz$.
\begin{theorem} \label{thm:min}
Let $\Omega$ be a domain in $\mathbb{R}^d$. Fix $\gz \in \Omega$; without loss of generality assume $\zeta=0$.  Suppose that $V$ satisfies Condition~(\textbf{C1}) with respect to $\zeta=0$. Assume further that the equation  $Q'(u) = 0$ admits a positive
solution in $\Omega$, and let $u_0$ be a positive solution of $Q'(u) = 0$ in $\Omega\sm\{0\}$
of minimal growth in a neighborhood of infinity in $\Omega$.
If $p > d $, then there exists a positive constant $C$ such that
\begin{equation}\label{C}
\lim_{x \to 0} u_0 (x) = C>0.
\end{equation}
 If $p< d$, then \eqref{C} holds if and only if $Q$ is critical in $\Gw$.
\end{theorem}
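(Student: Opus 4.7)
The starting point in both cases is Theorem~\ref{thm:main} applied to $u_0$: since $V$ satisfies~(\textbf{C1}) near the isolated singular point $\zeta=0$, the limit $\ell:=\lim_{x\to 0}u_0(x)$ exists, and moreover $\ell\in[0,\infty)$ when $p>d$ (nonclassical case) while $\ell\in(0,\infty]$ when $p<d$ (classical case). So for $p>d$ the only remaining task is to rule out $\ell=0$, and for $p<d$ the task is to show that finiteness of $\ell$ is equivalent to criticality of $Q$.

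For the case $p>d$ I will argue by contradiction. Suppose $\ell=0$, and let $v$ denote the positive solution of $Q'(u)=0$ in the whole domain $\Omega$ guaranteed by the hypothesis. Since~(\textbf{C1}) implies a weakened Fuchsian singularity (Lemma~\ref{C1C@weakFuchs}), and hence a weak Fuchsian singularity (Remark~\ref{rem:weakF}), Proposition~\ref{thm:reg} applies to the two positive solutions $u_0,v$ in a punctured neighborhood of~$0$; because $v(0)>0$ while $u_0\to 0$, the ratio limit equals $0$. Consequently, for each $\delta>0$ there is some $r_0(\delta)>0$ with $u_0\le \delta v$ on $B_{r_0}\sm\{0\}$. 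Fix any $r\in(0,r_0)$: then $\delta v$ is a positive supersolution of $Q'(u)=0$ on $\Omega\sm\overline{B_r}$, continuous up to $\partial B_r$, and $u_0\le \delta v$ on $\partial B_r$. The minimal growth property of $u_0$ (applied with $K=\{0\}$ and $K'=\overline{B_r}$) then yields $u_0\le \delta v$ on $\Omega\sm\overline{B_r}$; combining with the inner estimate gives $u_0\le \delta v$ throughout $\Omega\sm\{0\}$, and letting $\delta\to 0$ forces $u_0\equiv 0$, a contradiction. Hence $\ell>0$, which is the required $C>0$.

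For $p<d$ we already have $\ell>0$ from Theorem~\ref{thm:main}, so only the equivalence ``$\ell<\infty$ iff $Q$ is critical'' needs argument. If $\ell<\infty$, then $u_0$ is bounded near $0$; since~(\textbf{C1}) implies $|x|^p|V(x)|\le g(|x|)\le C$ near $0$ (i.e., a Fuchsian-type singularity), Corollary~\ref{thm:remov} shows that $u_0$ has a removable singularity at $0$ and hence extends to a positive solution of $Q'(u)=0$ on all of~$\Omega$. The minimal growth property at infinity is unaffected by this extension, so $u_0$ is a positive solution in $\Omega$ of minimal growth in a neighborhood of infinity in $\Omega$—which is precisely the criticality of $Q$ in $\Omega$. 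Conversely, if $Q$ is critical then by definition $u_0$ is itself a positive solution on all of $\Omega$, hence continuous at $0$, so $\ell=u_0(0)\in(0,\infty)$.

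The main obstacle is the $p>d$ case: one has to exploit two pieces of information in tandem—the ratio-limit theorem, which gives uniform smallness of $u_0/v$ just near~$0$, and the minimal growth property, which propagates pointwise control from a small sphere outward—in order to conclude that $u_0\le \delta v$ holds globally for every $\delta>0$. The remaining parts of the argument are essentially direct applications of Theorem~\ref{thm:main}, Corollary~\ref{thm:remov}, and the definition of criticality.
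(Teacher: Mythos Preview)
Your proof is correct and follows essentially the same route as the paper's. The only cosmetic difference is that the paper bypasses Proposition~\ref{thm:reg} in the $p>d$ step: since $u_0\to 0$ and $v$ is continuous at $0$ with $v(0)>0$, the inequality $u_0\le\varepsilon v$ on a small sphere $S_r$ is immediate without invoking the ratio-limit theorem.
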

\begin{proof}
Suppose that $p > d $. By Theorem~\ref{thm:main}, $u_0$ is in fact continuous at $0$.
It remains to prove that $u_0(0) >0$. Assume to the contrary that $u_0(0) = 0$,
and let $v$ be a positive solution of $Q'(u) = 0$ in $\Omega$ satisfying $v(0) = 1$. Then
for any $\varepsilon>0$ there exists $r=r(\varepsilon)>0$  such that
$$
 u_0(x)\leq \varepsilon v(x)  \qquad  \forall \quad x \in S_r,
$$
and by Definition~\ref{def:minimal_gr},
$$
u_0(x)\leq \varepsilon v(x)  \qquad  \forall x \in \Omega \sm B_r.
$$
Clearly, $r(\varepsilon) \to 0$ as $\varepsilon \to 0$. Consequently, $u_0= 0$, a contradiction (cf. \cite[Section 5]{PT}).

The last statement of the theorem follows from Corollary~\ref{thm:remov}.
\end{proof}
The following example illustrates Theorem~\ref{thm:min}.
\begin{example}\label{ex:mg}{\em
Let $\Omega=B_R$, $Q'(u)=-\pl (u)$, $p>d$, and $\gz=0$. The corresponding positive solution $u_0$ of  minimal growth in a neighborhood of infinity in $\Omega$ is given by
$$u_0(x):=R^{\alpha^*}-|x|^{\alpha^*} \qquad x\in B_R\sm \{0\}.$$
Note that $\lim_{x\to 0}u_0(x)=R^{\alpha^*}>0$.
}
\end{example}

Next, we present a positive Liouville theorem in $\mathbb{R}^d$, where $p<d$. This result is well known for $p=2$ under weaker assumptions (see for example, \cite{P}).
\begin{theorem}\label{thm:Liouville}
 Let $p< d$, and  suppose that $V\in L^\infty_{\mathrm{loc}}(\mathbb{R}^d)$ satisfies conditions~(\textbf{C1}) and (\textbf{C2}) near infinity. Assume further that
\begin{equation}\label{Q1} Q(u):=\int_{\mathbb{R}^d}\left(|\nabla u|^p+V|u|^p\right)\,\mathrm{d}x\geq 0
\qquad \forall  u\in C_0^\infty(\mathbb{R}^d).
\end{equation}
Then the equation $Q'(u)=0$ admits a unique (up to a multiplicative constant) positive solution  $u$ in $\mathbb{R}^d$. Moreover, there exists $C\geq 0$ such that
$$\lim_{x\to\infty}u(x)=C,$$
and $C=0$ if $Q$ is critical in $\mathbb{R}^d$. Furthermore, if $\,V$ is integrable near infinity, then  $C=0$ if and only if $Q$ is critical in $\mathbb{R}^d$.
\end{theorem}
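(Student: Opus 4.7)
The plan is to establish, in order, existence of a positive solution, existence of the limit at infinity, uniqueness up to multiplicative constant, and the sharp link between $C=0$ and criticality.

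For existence, I would invoke the Agmon--Allegretto--Piepenbrink type theorem for quasilinear equations (as cited in \cite{PT}): the assumption $Q(u)\geq 0$ on $C_0^\infty(\mathbb{R}^d)$ furnishes a positive solution $u$ of $Q'(u)=0$ in $\mathbb{R}^d$. Every such $u$ is, in particular, a positive solution in each punctured neighborhood $\mathbb{R}^d\setminus\overline{B_R}$ of $\zeta=\infty$; since $p<d$ places us in the nonclassical case at $\zeta=\infty$, Theorem~\ref{thm:main} then yields the existence of $C:=\lim_{x\to\infty}u(x)$ with $0\leq C<\infty$.

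For uniqueness, let $u_1,u_2$ be two positive solutions in $\mathbb{R}^d$. Proposition~\ref{thm:reg} (ratio limit at $\zeta=\infty$) provides $\lambda:=\lim_{x\to\infty}u_1(x)/u_2(x)\in[0,\infty]$, and I set $\phi:=u_1/u_2$, a continuous positive function on $\mathbb{R}^d$ with $\phi\to\lambda$ at infinity. If $\sup_{\mathbb{R}^d}\phi>\lambda$, then this supremum must be attained at some interior point $x_0$; since $\phi(x_0)u_2$ is itself a positive solution of $Q'(w)=0$ (by the $(p-1)$-homogeneity of $Q'$) touching $u_1$ from above at $x_0$, the strong maximum principle for positive solutions of $p$-Laplacian type equations forces $u_1\equiv\phi(x_0)u_2$, so $\phi$ is constant, contradicting $\phi(x_0)>\lambda$. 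Combined with the analogous infimum argument (and the same reasoning ruling out the degenerate cases $\lambda\in\{0,\infty\}$), this gives $\phi\equiv\lambda\in(0,\infty)$, whence $u_1=\lambda u_2$.

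The critical case is then immediate: by definition, when $Q$ is critical in $\mathbb{R}^d$ the unique (up to constant) positive solution $u$ is a positive solution of minimal growth in a neighborhood of infinity, and Theorem~\ref{thm_nee} (applicable since $p\neq d$ and (\textbf{C1}),(\textbf{C2}) hold at infinity) yields $u(x)\sim\min\{1,|x|^{\alpha^*}\}=|x|^{\alpha^*}\to 0$ (using $\alpha^*<0$ since $p<d$), so $C=0$. For the converse under the hypothesis that $V$ is integrable near infinity, I would argue the contrapositive: if $Q$ is subcritical, then $C>0$. Subcriticality furnishes, for any $\zeta_0\in\mathbb{R}^d$, a positive minimal growth solution $u_{\zeta_0}$ with an isolated singularity at $\zeta_0$; Theorem~\ref{thm_nee} gives $u_{\zeta_0}(x)\sim|x|^{\alpha^*}$ at infinity. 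If $C=0$ were to hold, Theorem~\ref{thm:asymp3} would force $u(x)\sim|x|^{\alpha^*}$ at infinity as well. The main obstacle I expect is to convert this asymptotic matching into a global identification $u=c\,u_{\zeta_0}$, which would contradict the regularity of $u$ at $\zeta_0$ versus the singular behavior of $u_{\zeta_0}$ there; the naive attempt via the strong maximum principle applied to the ratio $u/u_{\zeta_0}$ on $\mathbb{R}^d\setminus\{\zeta_0\}$ does not close, because this punctured domain has two ``ends'' at which the ratio has different (zero and positive) limits and the extrema need not be attained in the interior. The workable route is to show, via a Perron-type monotone construction (solving the Dirichlet problems $Q'(v_R)=0$ on $B_R\setminus K$ with $v_R=u$ on $\partial K$ and $v_R=0$ on $\partial B_R$, then letting $R\to\infty$) combined with the Dirichlet uniqueness corollary following Theorem~\ref{lindqvist}, that $u$ is itself a positive solution of minimal growth in a neighborhood of infinity in $\mathbb{R}^d$; the uniqueness of such solutions up to a multiplicative constant then identifies $u$ with $u_{\zeta_0}$, yielding the desired contradiction.
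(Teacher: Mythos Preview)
Your outline tracks the paper's proof closely: existence via the nonnegativity of $Q$, the limit at infinity via Theorem~\ref{thm:main}, $C=0$ in the critical case via Theorem~\ref{thm_nee}, and the converse via Theorem~\ref{thm:asymp3}. Two points deserve comment.

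\textbf{Uniqueness.} The paper does not argue uniqueness internally; it simply invokes the positive Liouville theorem of \cite[Theorem~2.6]{PF} (and notes in Remark~\ref{rem:13} that the existence of the limit also suffices, without details). Your touching argument is different and hides a real subtlety: you need a \emph{strong comparison principle} between two positive \emph{solutions} of a $p$-Laplacian type equation, not the strong maximum principle. For $p\neq 2$ this is known to be delicate precisely because the operator degenerates where the gradient vanishes; if the maximum of $\phi=u_1/u_2$ is attained at a point $x_0$ with $\nabla u_2(x_0)=0$ (hence $\nabla u_1(x_0)=0$ as well), the tangency principle you invoke is not available off the shelf. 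This is not fatal---one can recover uniqueness by a Picone/D\'{\i}az--Saa identity argument in the spirit of \cite{PTalt}, or by the Lindqvist-type argument of Theorem~\ref{lindqvist} once both solutions are seen to be small---but as written the step ``strong maximum principle forces $u_1\equiv\phi(x_0)u_2$'' is a gap you should close.

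\textbf{The converse under integrability.} Here you are actually more careful than the paper, which asserts in one line that $u\sim|x|^{\alpha^*}$ near infinity is equivalent to $u$ being of minimal growth. You correctly isolate this as the nontrivial step and propose a Perron construction together with the Dirichlet uniqueness corollary after Theorem~\ref{lindqvist}; that is a sound route. Your very last move, however, is unnecessarily indirect: once you have shown that the \emph{entire} solution $u$ is of minimal growth in a neighborhood of infinity in $\mathbb{R}^d$, criticality of $Q$ follows immediately from the definition (the minimal-growth solution $u_\zeta$ extends to a global solution), and you have your contradiction. You do not need to pass through an identification $u=c\,u_{\zeta_0}$ and a regularity-versus-singularity clash at $\zeta_0$.
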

\begin{proof}
Assumption \eqref{Q1} implies that the equation $Q'(u)=0$ admits an entire positive solution $u$ in $\mathbb{R}^d$ \cite{PTalt}. The {\em uniqueness} follows from \cite[Theorem~2.6]{PF}, where a positive Liouville theorem is proved under weaker assumptions. The existence of $ \lim_{x\to\infty}u(x)=C <\infty$ follows from Theorem~\ref{thm:main}.

By Theorem~\ref{thm_nee}, a positive solution $u$ of the equation $Q'(u)=0$ of minimal growth in a neighborhood of infinity in $\mathbb{R}^d$ satisfies $u\sim v_{\alpha^*}$, and in particular $u$ tends to zero as $x\to\infty$. Thus, if $Q$ is critical in $\mathbb{R}^d$, then $C=0$.

Moreover, if $V$ is integrable, then by Theorem~\ref{thm:asymp3} we have $\lim_{x\to\infty}u(x)=0$ if and only if
\begin{equation}\label{eq:conj1}
    u(x) \;\underset{x\to \infty}{\sim}\; |x|^{\alpha^*},
\end{equation}
which takes place if and only if $u$ is a positive solution of  minimal growth in a neighborhood of infinity in $\mathbb{R}^d$.
\end{proof}
\begin{remark}\label{rem:13}{\em
 Under the assumptions of Theorem~\ref{thm:Liouville}, the uniqueness of a positive entire solution (up to a multiplicative constant) can also be obtained using the existence of the limit $ \lim_{x\to\infty}u(x)=C$.
  }
\end{remark}

 We conclude our paper with a conjecture. Let $\zeta\in\{0,\infty\}$ be an isolated singular point of the equation $Q'(w)=0$ in a domain $\Gw$. Denote by $\mathcal{G}_\gz$ the germ of all positive solutions $u$ of the equation $Q'(w)\!=\!0$ in some punctured neighborhood $\Gw'\subset \Gw$ of $\zeta$ (The neighborhood $\Gw'$ might depend on $u$).
Let $u,v\in \mathcal{G}_\gz$. We use the following notations.
\begin{itemize}
\item
We denote $u\underset{x\to \zeta}{\sim} v\;$ if
$\;\displaystyle{\underset{x \in \Gw}{\underset{x \to \zeta}{\lim}}\,\frac{u(x)}{v(x)}= C}$ for some  positive constant $C$.
\item By $u \underset{x \to \zeta}{\prec}v$ we mean that $\displaystyle{\underset{x \in \Gw}{\underset{x \to \zeta}{\lim}}\,\frac{u(x)}{v(x)} =0}$.

 \item By $u \underset{x \to \zeta}{\lsim} v $ we mean that either $u \underset{x \to \zeta}{\sim} v$ or $u \underset{x \to \zeta}{\prec}v$.

\item  We denote $u\underset{x \to \zeta}{\succ} u\;$ if $\;v \underset{x \to \zeta}{\prec}u$. Similarly,  $u\underset{x \to \zeta}{\gsim} v\;$ if $\;v \underset{x \to \zeta}{\lsim} u$.
     \end{itemize}
Clearly, $u \,\underset{x \to \zeta}{\sim}\, v$ defines an equivalence relation and equivalence classes on $\mathcal{G}_\gz$.
\begin{definition}\label{def:regular} {\em
We say that $\zeta$ is a {\em regular point of the equation $Q'(w)=0$ in $\Gw$}   if for any two positive solutions $u,v\in \mathcal{G}_\gz$ we have  either  $u \underset{x \to \zeta}{\lsim} v$ or $u \underset{x \to \zeta}{\gsim} v$.
 }
\end{definition}
\begin{conjecture}\label{main_conj}
Suppose that \eqref{eq:1} admits a (global) positive solution and $V$ has a Fuchsian type singularity at the isolated singular point $\zeta$. Then
\begin{enumerate}
\item[i)] $\zeta$ is a regular point of equation (\ref{eq:1}).
\item[ii)]  Equation \eqref{eq:1} admits a unique (global) positive solution of minimal growth in a neighborhood of infinity in $\Omega \sm \{\zeta\}$.
\item[iii)] $\mathcal{G}_\gz$ admits exactly two equivalence classes with respect to $\sim$\,.
\end{enumerate}
   \end{conjecture}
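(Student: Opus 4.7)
The plan is to reduce all three parts to a single statement: a nonlinear \emph{ratio limit theorem} valid under the Fuchsian (not merely weakened Fuchsian) hypothesis, asserting that for any two positive solutions $u,v$ of $Q'(w)=0$ in a punctured neighborhood of $\zeta$ the limit $\lim_{x\to\zeta}u(x)/v(x)$ exists in $[0,\infty]$. Once this is in hand, (i) is a restatement, while (ii) and (iii) follow from a generalization of Theorem~\ref{lindqvist} that identifies the ``small'' equivalence class with the class of positive solutions of minimal growth at $\zeta$, proving both the uniqueness in (ii) and the exactness of two equivalence classes in (iii) (the existence of two distinct classes being secured by the small/large pair to be constructed below).

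First I would construct positive solutions $u_{\mathrm{small}}$ and $u_{\mathrm{large}}$ of $Q'(w)=0$ in a punctured neighborhood of $\zeta$, playing the role of $u_{\mathrm{small}}^{(\zeta)}$ and $u_{\mathrm{large}}^{(\zeta)}$ of Lemma~\ref{lem:0sol}. Under the pure Fuchsian condition one cannot invoke the Wolff potential construction of Lemma~\ref{lem:green}; instead I would use a blow-up procedure. The rescalings $V_{R}(x):=R^{p}V(Rx)$ are uniformly bounded by $C|x|^{-p}$, so by the uniform Harnack inequality of Proposition~\ref{prop:harnack} and weak$^{*}$ compactness one extracts subsequential limits $V_{\infty}$ of Hardy type. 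Motivated by Example~\ref{ex:Hardy}, the limit problem should admit two radial power solutions $|x|^{\gamma_{-}}$, $|x|^{\gamma_{+}}$; gluing dyadic annular pieces via Perron's method against these barriers should produce $u_{\mathrm{small}},\,u_{\mathrm{large}}$ with $u_{\mathrm{small}} \underset{x\to\zeta}{\prec} u_{\mathrm{large}}$.

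Second, I would attack the ratio limit theorem by combining two ingredients: a Lindqvist-type energy identity as in the proof of Theorem~\ref{lindqvist}, now applied to $u$ and the rescaled small solution, and a three-spheres inequality of the form of Theorem~\ref{thm:tsi} in which the Wolff potential $\tilde W_{G}$ is replaced by the homogeneous ``fundamental solution'' $|x|^{\gamma_{\pm}}$ of the averaged/linearized equation. This should give convexity of $M_{u}(r)$ and concavity of $m_{u}(r)$ with respect to $r^{\gamma_{\pm}}$ on each sufficiently small annulus, after which the uniform Harnack inequality collapses $M_{u}/m_{u}$ to a bounded ratio and forces the $\limsup$ and $\liminf$ of $u/u_{\mathrm{small}}$ (or of $u/u_{\mathrm{large}}$) to coincide.

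The main obstacle is the last step: in the purely Fuchsian regime the blow-up limits $V_{\infty}$ along different subsequences $R_{n}\to\zeta$ need not agree, so the exponents $\gamma_{\pm}$ can in principle fluctuate with scale. This is exactly the phenomenon absent from the weakened Fuchsian setting (where $V_{\infty}=0$) and is responsible for the failure of the classical Wolff-potential machinery. A possible route is to show that the quantity $\log(u(x)/u_{\mathrm{small}}(x))/\log|x|$ is almost monotone in $|x|$ near $\zeta$ via a Picone-type identity, which would force convergence of the subsequential exponents and give the ratio limit without requiring uniqueness of the blow-up. If instead one can only establish that this quantity has bounded oscillation, one still obtains the qualitative regularity (i) without necessarily a sharp asymptotic equivalence, preserving (iii) but weakening the conclusion of (ii) to comparability. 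For $p=2$ the argument of \cite{P94} achieves all this via linear Green function comparisons, and the nonlinear analogue of those comparisons (in the spirit of the identity in Lemma~\ref{lem:green1}) appears to be the crucial missing tool.
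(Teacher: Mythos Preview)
The statement you are attempting to prove is presented in the paper as a \emph{conjecture}, not a theorem; the paper offers no proof and explicitly remarks that the results of the paper and of \cite{PF,P94} give only \emph{partial} answers. In particular, the paper notes that (i) is known to imply (ii), and that regularity (i) has been established only under the stronger hypotheses of a \emph{weak} Fuchsian singularity or spherical symmetry of $V$. So there is no ``paper's own proof'' to compare against; your proposal is an attack on an open problem.

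Viewed in that light, your outline correctly identifies the crux --- a ratio limit theorem under the bare Fuchsian hypothesis --- and correctly observes that (ii) and (iii) would follow once (i) is in hand (this is essentially the content of the remark following the conjecture, together with the Lindqvist-type argument of Theorem~\ref{lindqvist}). However, the argument you sketch for (i) has a genuine gap that you yourself flag but do not close: the blow-up limits $V_\infty$ along different sequences $R_n\to\zeta$ need not coincide, so there is no single pair of exponents $\gamma_\pm$ with which to build comparison functions, run a three-spheres argument, or even define $u_{\mathrm{small}}$ and $u_{\mathrm{large}}$ globally near $\zeta$. Your suggested remedy --- almost-monotonicity of $\log(u/u_{\mathrm{small}})/\log|x|$ via a Picone identity --- is not substantiated; Picone-type identities for the $p$-Laplacian control energy quantities, not pointwise logarithmic slopes, and it is precisely the passage from $L^p$ control to pointwise asymptotics that fails when the limiting potential is genuinely nonzero and nonunique (cf.\ Remark~\ref{rem:Fuchs} and Example~\ref{ex:Hardy}). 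The fallback you mention --- bounded oscillation yielding only comparability --- would not even give (i) as stated, since regularity requires the \emph{limit} of $u/v$ to exist, not merely $u\asymp v$. In short, your reduction to the ratio limit theorem is sound and matches what is known, but the proposed proof of that theorem does not go beyond the known partial cases.
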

 We note that the results discussed in the present paper and in \cite{PF,P94} give partial answers to Conjecture~\ref{main_conj}.  In particular, in \cite{PF} the authors proved that {\em i)} implies {\em ii)}, and proved the regularity for
potentials with a weak Fuchsian isolated singularity and for spherically symmetric potentials with a Fuchsian isolated singularity.


\begin{center}{\bf Acknowledgments} \end{center}
The authors wish to thank V.~Liskevich for valuable discussions, and the referee for his helpful remarks.
 They acknowledge the support of the Israel Science
Foundation (grant No. 587/07) founded by the Israel Academy of
Sciences and Humanities. M.~F. acknowledges also the support of the Israel Science
Foundation (grant No. 419/07) founded by the Israel Academy of
Sciences and Humanities. M.~F. was also partially supported by a fellowship
of the UNESCO fund. This research was also supported by Sidney \& Ann Grazi Research Fund.



\begin{thebibliography}{1}
\newcommand{\cs}{\vspace{2\parskip}}

\bibitem{Agmon1} S.~Agmon,
{\em Unicit\'{e} et convexit\'{e} dans les probl\`{e}mes diff\'{e}rentiels},  S\'{e}minaire de Math\'{e}\-matiques Sup\'{e}rieures No. 13,   Les Presses de l'Universit\'{e} de Montr\'{e}al, Montreal, Que., 1966.

\bibitem{Agmon}
S.~Agmon, On positivity and decay of solutions of second order elliptic
  equations on {R}iemannian manifolds,
in: {\em Methods of Functional Analysis and Theory of Elliptic
  Equations ({N}aples, 1982)}, Liguori, Naples, 1983, pp. 19--52.

\bibitem{bhat}
T.~Bhattacharya, On the behaviour of $\infty$-harmonic functions near isolated
points, {\em Nonlinear Anal.} 58 (2004) 333--349.

\bibitem{veron}
M.~F.~Bidaut-V{\'e}ron,  R.~Borghol, L.~V{\'e}ron,
Boundary Harnack inequality and a priori estimates of singular
  solutions of quasilinear elliptic equations,
{\em Calc. Var. Partial Differential Equations} 27 (2006) 159--177.


\bibitem{Brum} R.~Brummelhuis, Three-spheres theorem for second order elliptic equations,  {\em J. Anal. Math.} 65 (1995) 179--206.

\bibitem{PF}
M.~Fraas, Y.~Pinchover, Positive Liouville theorems and asymptotic behavior for $p$-Laplacian type elliptic equations with a Fuchsian potential, {\em Confluentes Mathematici} 3 (2011) 291--323.

\bibitem{Garcia}
J.~Garc{\'{\i}}a-Meli{\'a}n,  J.~Sabina~de Lis,
Maximum and comparison principles for operators involving the
  {$p$}-{L}aplacian,
 {\em J. Math. Anal. Appl.} 218 (1998) 49--65.

\bibitem{GZ} F.~Gesztesy, Z.~Zhao, On positive solutions of critical Schr\"odinger operators in two dimensions,  {\em J. Funct. Anal.}  127 (1995) 235--256.


\bibitem{Martio} J.~Heinonen, T.~Kilpel\"{a}inen,  O.~Martio,
  {\em Nonlinear Potential Theory of Degenerate Elliptic Equations}, unabridged republication of the 1993 original,  Dover, Mineola, N.~Y., 2006.

\bibitem{JV} B.~J.~Jaye, I.~E.~Verbitsky, The fundamental solution of nonlinear equations with natural growth terms, to appear in {\em Ann. Sc. Norm. Super. Pisa Cl. Sci.}, arXiv1002.4664

\bibitem{KV}  S.~Kichenassamy, L.~V\'{e}ron, Singular solutions of the te $p\,$-Laplace equation,
{\em  Math. Ann.} 275 (1986) 599--615, and 277 (1987) 352.

\bibitem{Landis} E.~M.~Landis, Some problems of the qualitative theory of second-order elliptic equations (case of several independent variables), {\em Uspehi Mat. Nauk}  18 (1963) 3--62.

\bibitem{Lindqvist} P.~Lindqvist, On the equation $\mathrm{div}(|\grad u|^{p-2} \grad u) + \lambda |u|^{p-2} u =0$,
{\em Proc. Amer. Math. Soc.} 109 (1990) 157--164.

\bibitem{LS} V.~Liskevich, I.~I.~Skrypnik, Isolated singularities of solutions to quasi-linear elliptic equations with absorption, {\em J. Math. Anal. Appl.}  338 (2008) 536--544.

\bibitem{LSS} V.~Liskevich, I.~I.~Skrypnik, I.~V.~Skrypnik, Positive solutions to singular
nonlinear Schr\"odinger-type equations, {\em Ukr. Mat. Visn.}  6 (2009) 55--96.

\bibitem{MH} F.~I.~Mamedov, A.~Harman,
On the removability of isolated singular points for degenerating nonlinear elliptic equations,
{\em Nonlinear Anal.} 71 (2009) 6290--6298.

\bibitem{Mikl} V.~M.~Miklyukov, A.~Rasila, M.~Vuorinen, Three spheres theorem for $p$-harmonic functions,  {\em Houston J. Math.}  33 (2007) 1215--1230.

\bibitem{P}
Y.~Pinchover, On the equivalence of Green functions of second order elliptic equations in $\mathbb{R}^n$,  {\em Differential Integral Equations}  5 (1992) 481--493.

\bibitem{P94}
Y.~Pinchover, On positive Liouville theorems and
asymptotic behavior of solutions of Fuchsian type elliptic
operators, {\em Ann.  Inst.  H. Poincar\'{e}. Anal. Non Lin\'{e}aire}
11 (1994) 313--341.

\bibitem{PTalt}
Y.~Pinchover, K.~Tintarev,
Ground state alternative for {$p$}-{L}aplacian with potential term,
{\em Calc. Var. Partial Differential Equations} 28 (2007) 179--201.

\bibitem{PT}
Y.~Pinchover, K.~Tintarev,
On positive solutions of minimal growth for singular
  {$p$}-{L}aplacian with potential term,
{\em Adv. Nonlinear Stud.} 8 (2008) 213--234.

\bibitem{PW} M.~H.~Protter, H.~F.~Weinberger, {\em Maximum Principles in Differential Equations},
Springer-Verlag, New York, 1984.

\bibitem{Puc}
P.~Pucci, J.~Serrin,
{\em The Maximum Principle},
 Progress in Nonlinear Differential Equations and their Applications
  73, Birkh\"auser-Verlag, Basel, 2007.

\bibitem{Qi} W.~J.~Qi, Positive entire solutions for singular $p$-Laplacian equations on $\mathbb{R}^N$ with $p\geq N\geq 2$. {\em Nonlinear Anal.} 73 (2010) 1645--1652.

\bibitem{Ser}
J.~Serrin,
Isolated singularities of solutions of quasi-linear equations,
{\em Acta Math.} 113 (1965) 219--240.

\bibitem{Serrin2}
J.~Serrin, Singularities of solutions of nonlinear equations, {\em Proc. Sympos. Appl. Math.} Vol. XVII, Amer. Math. Soc., Providence, R.I., (1965), pp. 68--88.

\bibitem{V} R.~V\'{y}born\'{y}, The Hadamard three-circles theorems for partial differential equations,  {\em Bull. Amer. Math. Soc.}  80 (1973) 81--84.

\end{thebibliography}
\end{document}